\titleformat{\subsection}{\it}{\thesubsection.\enspace}{1.5pt}{}
\titleformat{\subsubsection}{\it}{\thesubsubsection.\enspace}{1.5pt}{}
\newtheorem{theorem}{Theorem}[section]
\newtheorem{proposition}[theorem]{Proposition}
\newtheorem{remark}{Remark}[section]
\newtheorem{lemma}[theorem]{Lemma}
\numberwithin{equation}{section}
\def\th2{\frac{\theta}{2}}
\begin{document}
\title{Local-in-time Well-posedness of  Boundary Layer System for the
Full Incompressible MHD Equations by Energy Methods \hspace{-4mm}}
\author{Jincheng Gao \quad Boling Guo  \quad Daiwen Huang\\[10pt]
\small {Institute of Applied Physics and Computational Mathematics,}\\
\small {100088, Beijing, P. R. China}\\[5pt]
}

\footnotetext{Email: \it gaojc1998@163.com(J.C. Gao),
                     \it gbl@iapcm.ac.cn(B.L. Guo)
                     \it hdw55@tom.com(D.W.Huang).}
\date{}

\maketitle

\begin{abstract}
In this paper, we investigate the well-posedness theory for the
MHD boundary layer system in two-dimensional space.
The boundary layer equations are governed by the Prandtl type equations that
are derived from the full incompressible MHD system
with non-slip boundary condition on the velocity,
perfectly conducting condition on the magnetic field,
and Dirichlet boundary condition on the temperature
when the viscosity coefficient depends on the temperature.
To derive the Prandtl type boundary layer system, we require all the
hydrodynamic Reynolds numbers, magnetic Reynolds numbers
and Nusselt numbers tend to infinity at the same rate.
Under the assumption that the initial tangential magnetic field is not zero,
one applies the energy methods to establish the local-in-time existence
and uniqueness of solution for the MHD boundary
layer equations without the necessity of monotonicity condition.

\vspace*{5pt}
\noindent{\it {\rm Keywords}}:
Prandtl type equations, Full incompressible MHD equations, Well-posedness,
Sobolev space, non-monotone condition.

\vspace*{5pt}
\noindent{\it {\rm 2010 Mathematics Subject Classification}}:
76N20, 35A07, 35G31, 35M33.

\end{abstract}

%\tableofcontents

\section{Introduction}

\quad The dynamics of an electrically conducting liquid near a wall
has been a topic of constant interest since the pioneering work
of Hartmann \cite{Hartmann}. An appropriate starting point to describe
such dynamics is the classical incompressible magnetohydrodynamics(MHD) system.
One important problem about MHD is to understand the high Reynolds and Nusselt
numbers limit in a domain with boundary.
In this paper, we investigate the following initial boundary value problem
for the two dimensional full incompressible  MHD system
in a periodic domain $\Omega=\left\{(x, y): x\in \mathbb{T}, y\in \mathbb{R}^+\right\}$:
\begin{equation}\label{eq1}
\left\{
\begin{aligned}
&\partial_t u^\varepsilon+(u^\varepsilon\cdot \nabla)u^\varepsilon
-\varepsilon {\rm div}(2\mu(\vartheta^\varepsilon)D(u^\varepsilon))
+\nabla p^\varepsilon=(H^\varepsilon \cdot \nabla)H^\varepsilon,\\
&c_\upsilon [\partial_t \vartheta^\varepsilon+(u^\varepsilon\cdot \nabla) \vartheta^\varepsilon]
-\varepsilon \kappa \Delta \vartheta^\varepsilon
=2 \varepsilon{\mu(\vartheta^\varepsilon)}|D(u^\varepsilon)|^2
+\varepsilon \nu  |\nabla \times H^\varepsilon|^2,\\
&\partial_t H^\varepsilon-\nabla \times (u^\varepsilon\times H^\varepsilon)
-\varepsilon \nu \Delta H^\varepsilon =0,\\
&{\rm div}u^\varepsilon =0,\quad {\rm div}H^\varepsilon=0,
\end{aligned}
\right.
\end{equation}
The unknown function $u^\varepsilon=(u_1^\varepsilon, u_2^\varepsilon)$
denotes the velocity vector, $H^\varepsilon=(h_1^\varepsilon, h_2^\varepsilon)$
denotes the magnetic field,
$\vartheta^\varepsilon$ denotes the absolute temperature,
and $p^\varepsilon=\widetilde{p}^\varepsilon+\frac{1}{2}|H^\varepsilon|^2$
represents the total pressure with $\widetilde{p}^\varepsilon$ the pressure of fluid.
Here, $\mu(\vartheta^\varepsilon)$ means that $\mu$ is a smooth function of temperature
$\vartheta^\varepsilon$, and $\varepsilon \mu(\vartheta^\varepsilon), \varepsilon \kappa$
and $\varepsilon \nu$ represent the viscosity, heat conductivity and resistivity coefficients
respectively.
To obtain the same boundary layer thickness,
we assume the viscosity, heat conductivity and resistivity coefficients have the
the same order of a small parameter $\varepsilon$.
The positive constant $c_v$ is the heat capacity coefficient,
and the deformation tensor $D(u^\varepsilon)$ is defined by
$$
D(u^\varepsilon)=\frac{1}{2}\left[\nabla u^\varepsilon+(\nabla u^\varepsilon)^{tr}\right].
$$
To complete the system \eqref{eq1}, the boundary conditions are given by
\begin{equation}\label{bc1}
\left.u^\varepsilon\right|_{y=0}=0,\quad
\left.\partial_y h_1^\varepsilon\right|_{y=0}=\left.h_2^\varepsilon\right|_{y=0}=0,\quad
\left. \vartheta^\varepsilon\right|_{y=0}=0.
\end{equation}
As the parameter $\varepsilon$ tends to zero in the systems \eqref{eq1},
we obtain the following systems formally
\begin{equation}\label{ideal}
\left\{
\begin{aligned}
&\partial_t u^0+(u^0\cdot \nabla)u^0
+\nabla P^0=(H^0 \cdot \nabla)H^0,\\
&\partial_t \vartheta^0+(u^0\cdot \nabla) \vartheta^0=0,\\
&\partial_t H^0-\nabla \times (u^0\times H^0)=0,\\
&{\rm div}u^0 =0,\quad {\rm div}H^0=0,
\end{aligned}
\right.
\end{equation}
which are the ideal MHD systems with energy equation.
Then, it is easy to check that there is a mismatch of boundary condition between
the equations \eqref{eq1} and \eqref{ideal} on the boundary $y=0$,
which will form the boundary layer as in the vanishing viscosity,
heat conductivity and resistivity limit process.
To find out the terms in \eqref{eq1} whose contributions is essential
for the boundary layer, we use the same scaling as the one used in \cite{Oleinik2},
$$
t=t,\quad x=x,\quad \widetilde{y}=\varepsilon^{-\frac{1}{2}}y,
$$
then  set
$$
\begin{aligned}
&u_1(t, x, \widetilde{y})=u_1^\varepsilon (t, x, y),\quad
u_2(t, x, \widetilde{y})=\varepsilon^{-\frac{1}{2}}u_2^\varepsilon (t, x, y),\\
&h_1(t, x, \widetilde{y})=h_1^\varepsilon (t, x, y),\quad
h_2(t, x, \widetilde{y})=\varepsilon^{-\frac{1}{2}}h_2^\varepsilon (t, x, y),
\end{aligned}
$$
and
$$
\theta(t, x, \widetilde{y})=\vartheta^\varepsilon(t, x, y),\quad
p(t, x, \widetilde{y})=p^\varepsilon(t, x, y).
$$
In this paper, we assume the viscosity function $\mu(\vartheta^\varepsilon)$
has the following form
\begin{equation}\label{assumption}
\mu(\vartheta^\varepsilon)\triangleq \mu \vartheta^\varepsilon+\mu.
\end{equation}
Then by taking the leading order, we deduce from the equations \eqref{eq1} that
\begin{equation}\label{eq2}
\left\{
\begin{aligned}
&\partial_t u_1+u_1\partial_x u_1+u_2\partial_y u_1
-\mu \partial_y[(\vartheta+1)\partial_y u_1]+\partial_x p
=h_1 \partial_x h_1+h_2 \partial_y h_1,\\
&\partial_y p=0,\\
&c_\upsilon(\partial_t \vartheta+u_1 \partial_x \vartheta+u_2 \partial_y \vartheta)
-\kappa \partial^2_y \vartheta
=\mu(\vartheta+1)(\partial_y u_1)^2+\nu(\partial_y h_1)^2,\\
&\partial_t h_1+\partial_y(u_2 h_1-u_1 h_2)
-\nu \partial^2_y h_1=0,\\
&\partial_t h_2-\partial_x(u_2 h_1-u_1 h_2)
-\nu \partial^2_y h_2=0,\\
&\partial_x u_1+\partial_y u_2=0,\quad
\partial_x h_1+\partial_y h_2=0,
\end{aligned}
\right.
\end{equation}
where $(t, x, y)\in [0, T]\times \Omega$, here we have replaced
$\widetilde{y}$ by $y$ for simplicity of notations.
Indeed, the nonlinear boundary layer systems \eqref{eq2}
become the classical well-known unsteady boundary layer systems
if the magnetic field vanishes, refer to \cite{Schlitchting}.

The second equation of equations \eqref{eq2}$_2$ implies that the leading order of
boundary layers for the total pressure $p^\varepsilon(t, x, y)$ is
invariant across the boundary layer, and should be matched to the outflow pressure
$P(t, x)$ on top of boundary layer, that is, the trace of pressure of idea MHD flow.
Hence, we obtain
$$
p(t, x, y)\equiv P(t, x).
$$
Furthermore, the tangential component $u_1(t, x, y)$ of velocity flied, $h_1(t, x, y)$
of magnetic field, temperature $\vartheta(t, x, y)$, should match the outflow
tangential velocity $U(t, x)$, outflow tangential magnetic field $H(t, x)$
and the outflow temperature $\Theta(t, x)$, on the top of boundary layer, that is
\begin{equation}\label{infinity}
u_1(t, x, y)\rightarrow  U(t, x), \quad
h_1(t, x, y) \rightarrow  H(t, x),\quad
\vartheta(t, x, y) \rightarrow \Theta(t, x),
~{\rm as}~y \rightarrow +\infty,
\end{equation}
where $U(x, t), H(x, t)$ and $\Theta(x, t)$ are the trace of tangential velocity,
tangential magnetic field and temperature respectively.
Then, we have the following matching conditions:
\begin{equation}\label{eqinf}
\partial_t U+U\partial_x U+\partial_x P=H\partial_x H,\quad
\partial_t \Theta+U\partial_x \Theta=0,\quad
\partial_t H+U\partial_x H-H\partial_x U=0.
\end{equation}
Moreover, by virtue of \eqref{bc1}, one attains  the following boundary conditions
\begin{equation}\label{bc2}
\left.u_1\right|_{y=0}=
\left.u_2\right|_{y=0}=
\left.\vartheta\right|_{y=0}=
\left.\partial_y h_1\right|_{y=0}=
\left.h_2\right|_{y=0}=0.
\end{equation}
On the other hand, it is noted that the equation \eqref{eq2}$_5$
is a direct consequences of equations \eqref{eq2}$_4$, \eqref{eq2}$_6$
and the boundary conditions \eqref{bc2}. Hence, we only need to study the
following initial boundary value problem for the nonlinear MHD boundary layer equations
\begin{equation}\label{eq3}
\left\{
\begin{aligned}
&\partial_t u_1+u_1\partial_x u_1+u_2\partial_y u_1
-\mu \partial_y[(\vartheta+1)\partial_y u_1]+ P_x
=h_1 \partial_x h_1+h_2 \partial_y h_1,\\
&c_\upsilon(\partial_t \vartheta+u_1 \partial_x \vartheta+u_2 \partial_y \vartheta)
-\kappa \partial^2_y \vartheta
=\mu(\vartheta+1)(\partial_y u_1)^2+\nu(\partial_y h_1)^2,\\
&\partial_t h_1+\partial_y(u_2 h_1-u_1 h_2)
-\nu \partial^2_y h_1=0,\\
&\partial_x u_1+\partial_y u_2=0,\quad
\partial_x h_1+\partial_y h_2=0,
\end{aligned}
\right.
\end{equation}
with the boundary conditions
\begin{equation}\label{bc3}
\left.(u_1, u_2, \vartheta, \partial_y h_1, h_2)(t, x, y)\right|_{y=0}=0,
\quad \underset{y\rightarrow +\infty}{\lim}
(u_1, \vartheta, h_1)(t, x, y)=(U, \Theta, H)(t,x).
\end{equation}
and the initial data
\begin{equation}\label{id}
\left.u_1(t, x, y)\right|_{t=0}=u_{10}(x,y),\quad
\left.\vartheta(t, x, y)\right|_{t=0}=\vartheta_0(x,y),\quad
\left.h_1(t, x, y)\right|_{t=0}=h_{10}(x,y).
\end{equation}

%The aim of this paper is to show the local well-posedness of the systems \eqref{eq3}
%with non-zero tangential component of that magnetic field, that is,
%without loss of generality, by assuming
%\begin{equation}\label{low1}
%h_1(t,x,y)>0.
%\end{equation}
Let us first introduce some weighted Sobolev spaces for later use. Denote
$$
\Omega \triangleq \{(x, y): x\in \mathbb{T}, y \in \mathbb{R}^+\}.
$$
For any $l\in \mathbb{R}$, denote by $L^2_l(\Omega)$ the weighted Lebesgue space
with respect to the spatial variables:
$$
L_l^2(\Omega)\triangleq \{f(x, y):\Omega \rightarrow \mathbb{R},
\|f\|_{L_l^2(\Omega)}\triangleq (\int_\Omega {\langle y\rangle}^{2l}|f(x,y)|^2 dxdy)^{\frac{1}{2}}<+\infty\},\quad {\langle y\rangle} \triangleq 1+y,
$$
and then, for any given $m \in \mathbb{N}$, denote by $H_l^m(\Omega)$
the weighted Sobolev space:
$$
H^m_l(\Omega)\triangleq\{f(x,y):\Omega \rightarrow \mathbb{R},
\|f\|_{H_l^m(\Omega)}\triangleq (
\underset{m_1+m_2\le m}{\sum}\|{\langle y\rangle}^{l+m_2}\partial_x^{m_1}\partial_y^{m_2}f\|_{L^2(\Omega)}^2
)^{\frac{1}{2}}<+\infty\}.
$$

Now, we can state the main results with respect to the well-posedness theory
for the nonlinear MHD boundary layer sytems \eqref{eq3} in this paper as follows.
\begin{theorem}\label{theo1}
Let $m \ge 5$ be an integer, and $l \ge 0$ be a real number. Assume that the outer flow
$(U, \Theta, H, P_x)(t, x)$ satifies that for some $T>0$,
\begin{equation}\label{111}
M_0\triangleq\sum_{i=0}^{2m+2}
\underset{0\le t \le T}{\sup}\|\partial_t^i(U,\Theta,H,P)(t)\|_{H^{2m+2-i}(\mathbb{T}_x)}
<+\infty,
\end{equation}
and
$ \Theta(t, x)\ge 0$ for all $(t, x)\in [0, T]\times \mathbb{T}_x$.
Also, we suppose the initial data $(u_{10}, \vartheta_0, h_{10})(x, y)$ satisfies
\begin{equation}
\vartheta_0(x, y)\ge 0, \quad \left(u_{10}(x,y)-U(0,x), \vartheta_0(x,y)-\Theta(0,x),
h_{10}(x,y)-H(0,x)\right)\in H_{l}^{3m+2}(\Omega),
\end{equation}
and the compatibility conditions up to $m-$th order.
Moreover, there exists a sufficiently small constant $\delta_0>0$ such that
\begin{equation}
h_{10}(x,y)\ge 2\delta_0,\quad
|{\langle y\rangle}^{l+1}\partial_y^i (u_{10},\vartheta_0, h_{10})(x,y)|\le (2\delta_0)^{-1},
~{\rm for}~i=1,2,~(x, y)\in \Omega.
\end{equation}
Then, there exist a positive time $0<T^*\le T$ and a unique solution
$(u_1, u_2, \vartheta, h_1, h_2)$ to the initial boundary value problem
\eqref{eq3},  such that
\begin{equation}
(u_1-U, \vartheta-\Theta, h_1-H)\in \underset{i=0}{\overset{m}{\bigcap}}
W^{i,\infty}(0,T_*; H_l^{m-i}(\Omega)),
\end{equation}
and
\begin{equation}
\begin{aligned}
&(u_2+U_x y, h_2+H_x y)\in \underset{i=0}{\overset{m-1}{\bigcap}}
W^{i,\infty}(0,T_*; H_{-1}^{m-1-i}(\Omega)),\\
&(\partial_y u_2+U_x, \partial_y h_2+H_x)\in \underset{i=0}{\overset{m-1}{\bigcap}}
W^{i,\infty}(0,T_*; H_{l}^{m-1-i}(\Omega)).
\end{aligned}
\end{equation}
\end{theorem}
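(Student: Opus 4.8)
The plan is to prove Theorem~\ref{theo1} by a weighted energy method together with a regularized iteration, in the spirit of the Sobolev theory of MHD boundary layers without monotonicity.

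First I would homogenize the conditions at infinity. Setting $\bar u=u_1-U$, $\bar\vartheta=\vartheta-\Theta$, $\bar h=h_1-H$, which decay as $y\to+\infty$, one uses the outer-flow relations \eqref{eqinf} to check that $(\bar u,\bar\vartheta,\bar h)$ solve a boundary-layer system of the same structure as \eqref{eq3}, with forcing terms built from $(U,\Theta,H,P_x)$ and controlled by $M_0$ in \eqref{111}. The transversal components are recovered a posteriori from the divergence-free constraints as $u_2=-\int_0^y\partial_x u_1\,dy'$ and $h_2=-\int_0^y\partial_x h_1\,dy'$; it is $u_2+U_xy$ and $h_2+H_xy$ that decay, which is why the weighted spaces $H_{-1}^{m-1}$, $H_l^{m-1}$ appear in the conclusion. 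The entire argument is organized around a single weighted conormal energy
\[
\mathcal E_m(t)=\sum_{i+m_1+m_2\le m}\big\|\langle y\rangle^{\,l+m_2}\,\partial_t^{\,i}\partial_x^{\,m_1}\partial_y^{\,m_2}(\bar u,\bar\vartheta,\bar h)(t)\big\|_{L^2(\Omega)}^2,
\]
where time and tangential derivatives are the dangerous ones and the normal derivatives are recovered algebraically from the parabolic terms $\mu\partial_y[(\vartheta+1)\partial_y u_1]$, $\kappa\partial_y^2\vartheta$, $\nu\partial_y^2 h_1$ in \eqref{eq3}.

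The heart of the proof is a closed inequality $\frac{d}{dt}\mathcal E_m\le C(\mathcal E_m)$. Applying $\partial_t^{\,i}\partial_x^{\,m_1}\partial_y^{\,m_2}$ to \eqref{eq3} and pairing with the correspondingly weighted derivative, the dissipative terms furnish good negative contributions of the form $-\|\langle y\rangle^{\,l+m_2}\partial_y\partial_t^{\,i}\partial_x^{\,m_1}\partial_y^{\,m_2}(\cdot)\|_{L^2}^2$, while the transport and coupling terms are estimated by Sobolev ($m\ge5$) and weighted Hardy inequalities, using the pointwise bounds on $\partial_y^{\,i}(u_{10},\vartheta_0,h_{10})$ propagated in time. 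The main obstacle is the loss of one tangential derivative: the worst commutators, arising from $u_2\partial_y(\cdot)$ and $h_2\partial_y(\cdot)$, contain $\partial_x^{\,m}u_2=-\int_0^y\partial_x^{\,m+1}u_1\,dy'$ and $\partial_x^{\,m}h_2=-\int_0^y\partial_x^{\,m+1}h_1\,dy'$, which are one order beyond what $\mathcal E_m$ controls, and this loss cannot be removed by integration by parts alone. The decisive mechanism — and the reason the hypothesis $h_{10}\ge2\delta_0$ is imposed — is that the top-order integrals from the velocity equation and from the magnetic equation occur in a symmetric pairing in which, provided $h_1$ remains bounded away from zero, the uncontrolled $(m+1)$-st tangential derivatives cancel upon summing the velocity and magnetic estimates, leaving only quantities absorbable into $\mathcal E_m$ and the dissipation. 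The temperature equation exhibits the same convective loss through $u_2\partial_y\vartheta$ but has no magnetic counterpart; controlling its top-order contribution within the coupled estimate, simultaneously with the velocity–magnetic cancellation, is the principal technical difficulty.

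To convert the a priori estimate into existence, I would construct approximate solutions by solving the system regularized with a small tangential viscosity $\eta\partial_x^2$ (making each problem uniformly parabolic and locally solvable by standard theory), derive $\eta$-uniform bounds from the cancellation above, and pass to the limit $\eta\to0$. Two structural bounds must be maintained uniformly: the lower bound $h_1\ge\delta_0>0$, needed for the cancellation, propagated from $h_{10}\ge2\delta_0$ on a short interval using the $h_1$-equation and the $L^\infty$ control from $\mathcal E_m$; and the nonnegativity $\vartheta\ge0$, needed so that the coefficient $\vartheta+1\ge1$ keeps the $u_1$-equation uniformly parabolic, which follows from the maximum principle since the source $\mu(\vartheta+1)(\partial_y u_1)^2+\nu(\partial_y h_1)^2$ and the data are nonnegative. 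The uniform bound $\mathcal E_m(t)\le C$ on some $[0,T^*]$ with $T^*=T^*(M_0,\delta_0)$, together with weak-$*$ compactness and contraction of successive differences in a lower-order norm, yields a solution with the stated regularity; uniqueness follows by running the same weighted estimate (again using the cancellation to absorb the derivative loss) on the difference of two solutions, giving a Gronwall inequality with vanishing data. Finally, the claimed regularity of $u_2+U_xy$ and $h_2+H_xy$ is read off from the representations $u_2=-\int_0^y\partial_x u_1$, $h_2=-\int_0^y\partial_x h_1$ and the control of $(\bar u,\bar h)$ in $H_l^m$ via a weighted Hardy inequality. I expect the combined velocity–magnetic–temperature cancellation removing the tangential derivative loss to be the main obstacle; the remaining weighted bookkeeping, though lengthy, is routine.
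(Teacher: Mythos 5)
There is a genuine gap at the heart of your argument: the mechanism you invoke to defeat the loss of one tangential derivative is not the one that works, and you do not supply a substitute. After applying $\partial_\tau^\beta$ with $|\beta|=m$, the dangerous terms are of the form $(\partial_y u+U\phi'')\,\partial_\tau^\beta v$ with $\partial_\tau^\beta v=-\partial_y^{-1}\partial_x\partial_\tau^\beta u$, i.e.\ they involve $m+1$ tangential derivatives of $u$ itself. These do \emph{not} cancel ``upon summing the velocity and magnetic estimates'': the antisymmetric pairing you describe only kills the top-order transport terms $(h+H\phi')\partial_x\partial_\tau^\beta h$ against $(h+H\phi')\partial_x\partial_\tau^\beta u$, which were never the problem. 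The paper's actual device (following Liu--Xie--Yang) is to introduce the stream function $\psi$ of the magnetic field ($h=\partial_y\psi$, $g=-\partial_x\psi$) and replace $\partial_\tau^\beta(u,\theta,h)$ by the good unknowns
\begin{equation*}
u_\beta=\partial_\tau^\beta u-\frac{\partial_y u+U\phi''}{h+H\phi'}\,\partial_\tau^\beta\psi,\qquad
\theta_\beta=\partial_\tau^\beta \theta-\frac{\partial_y \theta+\Theta\phi''}{h+H\phi'}\,\partial_\tau^\beta\psi,\qquad
h_\beta=\partial_\tau^\beta h-\frac{\partial_y h+H\phi''}{h+H\phi'}\,\partial_\tau^\beta\psi;
\end{equation*}
the equation satisfied by $\partial_\tau^\beta\psi$ contains exactly $(h+H\phi')\partial_\tau^\beta v$, so the subtraction annihilates the derivative-losing terms. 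This is where $h_{10}\ge 2\delta_0$ enters --- as a lower bound on a \emph{denominator} --- not as a condition enabling a symmetric cancellation. Crucially, the same construction with $\theta_\beta$ is what handles the convective loss $u_2\partial_y\vartheta$ in the temperature equation; you correctly identify this as ``the principal technical difficulty'' but leave it unresolved, so your energy inequality does not close.

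A second, smaller but real, error: you claim $\vartheta\ge 0$ ``follows from the maximum principle since the source \dots and the data are nonnegative.'' The source is $\mu(\vartheta+1)(\partial_y u_1)^2+\nu(\partial_y h_1)^2$, whose sign is only known once $\vartheta\ge-1$ is established, so the argument as stated is circular; moreover the temperature equation is degenerate in $x$ (no $\partial_x^2\vartheta$), and the paper explicitly notes that the minimum principle is unavailable for this reason. The paper instead proves the lower bound by a Stampacchia-type truncation: testing with $(k-\vartheta)_+$ and closing a Gr\"onwall inequality, which works precisely because the viscosity has the affine form $\mu(\vartheta)=\mu\vartheta+\mu$. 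Your remaining architecture --- homogenization at infinity, conormal weighted energy, tangential regularization $\epsilon\partial_x^2$, compactness, and uniqueness by a Gr\"onwall estimate on differences --- does match the paper, but without the stream-function good unknowns the central estimate cannot be closed.
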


We now review some related works to the problem studied in this paper.
The vanishing viscosity limit of the incompressible Navier-Stokes equations
that, in a bounded domain with Dirichlet boundary condition,
is an important problem in both physics and mathematics.
This is due to the formation of a boundary layer, where the solution
undergoes a sharp transition from a solution of
the Euler system to the zero non-slip boundary condition on boundary of the
Navier-Stokes system.
This boundary layer satisfies the Prandtl system formally.
Indeed, Prandtl \cite{Prandtl} derived the Prandtl equations for boundary layers
from the incompressible Navier-Stokes equations with non-slip boundary condition.
The first systematic work in rigorous mathematics was obtained Oleinik \cite{Oleinik1},
in which she established the local in time well-posedness of the Prandtl equations
in dimension two by applying the Crocco transformation under the monotonicity condition on the tangential velocity field in the normal direction to the boundary.
For more extensional mathematical results, the interested readers can refer to
the classical book finished by Oleinik and Samokhin \cite{Oleinik2}.
By taking care of the cancelation in the convection term to overcome the
loss of derivative in the tangential direction of velocity, the researchers
in \cite{Xu-Yang-Xu} and \cite{Masmoudi} independently used the simply
energy method to establish well-posedness theory for the
two-dimensional Prandtl equations in the framework of Sobolev spaces.
Moreover, Xin and Zhang \cite{Xin-Zhang} built the global in time
weak solution by imposing an additional favorable condition on the pressure.
Furthermore, the well-posedness results for both classical and weak solutions
in dimension three were studied by Liu et al.\cite{{Liu-Wang-Yang1},{Liu-Wang-Yang2}}.
On the other hand, Sammartino and Caflisch \cite{{Sammartino-Caflisch1},{Sammartino-Caflisch2}}
obtained the well-posedness in the framework of analytic functions without the
monotonicity condition on the velocity field and justified the boundary layer expansion.
For more results to the Prandtl equations in the framework of analytic functions,
the interested readers can refer to
\cite{{Ignatova-Vicol},{Kukavica-Vicol},
{Lombardo-Cannone-Sammartino},{Maekawa},{Kukavica-Masmoudi-Vicol-Wong},{Zhang-Zhang}}
and the references therein.
And recently, the analyticity condition can be further relaxed to Gevrey regularity,
cf. \cite{{G-V-M-M},{G-V-M},{Li-Wu-Xu},{Li-Yang}}.

When the monotonicity condition is violated, separation of the boundary
layer is expected and observed for classical fluid.
Hence, E and Engquist \cite{E-Engquist} constructed a finite time blowup solution to
the Prandtl system for some special type of initial data.
Recently, G\'erard-Varet and Dormy  \cite{G-D-E} proved ill-posedness for
the linearized Prandtl equations around a nonmonotonic shear flow.
For more interesting ill-posedness(or instability) phenomena of solution to both
the linear and nonlinear Prandtl equations around the shear flow, the readers can
refer to \cite{{G-Nguyen},{Grenier},{Guo-Nguyen},{Liu-Wang-Yang3},{Liu-Yang},
{Grenier-Guo-Nguyen1},{Grenier-Guo-Nguyen2}, {Grenier-Guo-Nguyen3}}
and the references therein.
All these results show that the monotonicity assumption on the tangential
velocity is essential for the well-posedness except in the framework
of analytic functions or Gevrey functions.
On the other hand, as observed by Van Dommnelen and Shen \cite{Van Dommelen-Shen}
and studied mathematically by Hong and Hunter \cite{Hong-Hunter},
the monotonicity condition  is not needed for the well-posedness
of the inviscid Prandtl equations at least locally in time.
Recently, the well-posedness of thermal layer equations,
which was derived from the full compressible Navier-Stokes equations when the viscosity
coefficients vanish or are of higher order with respect to the heat conductivity
coefficient, were obtained by Liu et al.\cite{Liu-Wang-Yang4} without the monotonicity
condition on the velocity field in dimension three.

Under the influence of electro-magnetic field, the system of
magnetohydrodynamics(denoted by MHD) is a fundamental system
to describe the movement of electrically conducting fluid,
for example plasmas and liquid metals, refer to \cite{Alfven}.
For plasma, the boundary layer equations, which can be derived from the fundamental MHD
system, are more complicated than the classical Prandtl system because
of the coupling of the magnetic field with velocity field through the Maxwell equations.
If the magnetic field is transversal to the boundary, there are extensive discussions
on the so-called Hartmann boundary layer, refer to \cite{{Davidson},{Hartmann-Lazarus}}.
In addition, there are works on the stability of boundary layers with minimum Reynolds number
for flow with different structure to reveal the difference from the classical
boundary layers electro-magnetic field,  refer to \cite{{Arkhipov},{Drasin},{Rossow}}.
Under the non-slip boundary condition for the velocity, the well-posedness theory
for the boundary layer systems, which were derived if the hydrodynamic Reynolds numbers
tend to
infinity while the magnetic Reynolds numbers are fixed, was discussed in Oleinik and Samokhin \cite{Oleinik2}, for which the monotonicity condition on the velocity field is needed.
However, if both the hydrodynamic Reynolds numbers and magnetic Reynolds numbers
tend to infinity at the same rate, the local-in-time existence of solution
for the boundary layer system was
obtained by Liu et al.\cite{Liu-Xie-Yang} under the only condition on the initial
tangential magnetic field was not zero.
It should be pointed out that the well-posedness for this boundary layer system
does not need the monotonicity condition of tangential velocity.
At the same time, G\'{e}rard-Varet and Prestipino \cite{G-V-P}
provided a systematic derivation of boundary layer models in magnetohydrodynamics,
through an asymptotic analysis of the incompressible MHD system.
Furthermore, they also performed some stability analysis for the boundary layer
system, and emphasized the stabilizing effect of the magnetic field.

In this paper, we derive the boundary layer systems \eqref{eq3}
by requiring all the hydrodynamic Reynolds numbers,
magnetic Reynolds numbers and Nusselt numbers tend to infinity at the same rate.
On one hand, it is believed that the magnetic field has a stabilizing effect
on the boundary layer that could provide a mechanism for containment of
the high temperature gas in physics.
On the other hand, Liu et al.\cite{Liu-Xie-Yang} established the local
well-posedness theory for the MHD boundary layer systems(without energy equations)
under the only condition on the initial tangential magnetic field was not zero.
Hence, the prime objective of this paper is to prove the local existence and
uniqueness for the two dimensional MHD boundary layer systems with temperature field.
Now, let us explain the main difficulties arising from the appearance of temperature field
as well as the our strategies for overcoming them.
First of all, we should establish the lower bound estimate for the temperature field to
give $L^2(0,T; \mathcal{H}_l^m)-$norm for the quantity $\partial_y u$
because the viscosity coefficient depends on the temperature field.
Due to the lack of viscous term $\partial_x^2 \vartheta$, we can't apply
the minimum principle to attain the lower bound estimate for the temperature field.
Hence, we assume the viscosity function obeys the form(see \eqref{assumption}),
which will help us reach the target by means of energy method.
Secondly, the lack of high-order boundary conditions at $y=0$ prevent
us from applying the integration by parts in the $y-$variable, but
it will be solvable by taking the operator $\partial_t-\partial_y^2$
since the viscosity coefficient has a good form(see \eqref{assumption}).
Thirdly, some higher order nonlinear terms arising in the energy equation
will bring some difficulties when we apply the energy method to establish
local well-posedness theory for the MHD boundary layer systems.
However, we can choose the life span of solutions small suitably to
overcome these difficulties since we only investigate the local existence
of solutions in this paper. Finally, similar to the classical Prandtl equations,
the convective term $u_2\partial_y \vartheta$ in the energy equation \eqref{eq3}$_2$
will create a loss of $x-$derivative estimate.
Indeed, we can take the strategy of cancelation property and
create a quantity $\theta_\beta$(see \eqref{function3})
to avoid the $x-$derivative estimate of temperature field.

The rest of this paper is organized as follows.
Some preliminaries are given in Section \ref{sa}.
In Section \ref{sb}, one establishes the a priori energy estimates for the nonlinear
problem \eqref{eq3}.
The local-in-time existence and uniqueness of the solution to \eqref{eq3}
in Sobolev space are given in Section \ref{sc}.
Finally, some useful inequalities and important equivalent relations
will be stated in Appendixs \ref{appendixA} and \ref{appendixB}.

\section{Preliminaries}\label{sa}

\quad First of all, we introduce some notations which will be used
frequently in this paper. Denote the tangential derivative operator
$$
\partial_\tau^\beta =\partial_t^{\beta_1}\partial_x^{\beta_2},
\quad {\rm for}~\beta=(\beta_1, \beta_2)\in\mathbb{N}^2,~~
|\beta|=\beta_1+\beta_2,
$$
and then denote the derivative operator(in both time and space) by
$$
D^\alpha=\partial_\tau^\beta \partial_y^k,\quad {\rm for}~
\alpha=(\beta_1,\beta_2,k)\in \mathbb{N}^3,~~|\alpha|=|\beta|+k.
$$
Set $e_i \in \mathbb{N}^2, i=1,2$, and $E_j\in \mathbb{N}^3, j=1,2,3$, by
$$
e_1=(1,0),~e_2=(0,1),~E_1=(1,0,0),~E_2=(0,1,0),~E_3=(0,0,1),
$$
and denote by $\partial_y^{-1}$ the inverse of derivative $\partial_y$, i.e.,
$(\partial_y^{-1}f)(y)\triangleq \int_0^y f(z)dz$.
Furthermore, the notation $[\cdot, \cdot]$  denotes the commutator operator,
and $\mathcal{P}(\cdot)$ represents a nondecreasing polynomial function
that may differ from line to line.
For any integer $m$, define the function space $\mathcal{H}_l^m$ of
measurable functions $f(t, x, y): [0,T]\times \Omega \rightarrow \mathbb{R}$,
such that for any $t\in[0, T]$,
\begin{equation}\label{bn}
\|f(t)\|_{\mathcal{H}_l^m}\triangleq (\sum_{|\alpha|\le m}
\|{\langle y\rangle}^{k+l}D^\alpha f(t, \cdot)\|_{L^2(\Omega)}^2)^{\frac{1}{2}}<+\infty.
\end{equation}

Similar to Liu et al.\cite{Liu-Xie-Yang}, we introduce an auxiliary
function $\phi(y)$ satisfying that
\begin{equation}\label{function1}
\phi(y)=
\left\{
\begin{aligned}
&y, &\quad y \ge 2R_0,\\
&0, &\quad 0\le y \le R_0,
\end{aligned}
\right.
\end{equation}
which will help us overcome the technical difficulty originated
from the boundary terms at ${y=+\infty}$.
Then, set the new unknown functions:
\begin{equation}\label{def1}
\begin{aligned}
&u(t, x, y)\triangleq u_1(t, x, y)-U(t,x)\phi'(y),\quad
v(t, x, y)\triangleq u_2(t, x, y)+U_x(t,x)\phi(y),\\
&h(t, x, y)\triangleq h_1(t, x, y)-H(t,x)\phi'(y),\quad
g(t, x, y)\triangleq h_2(t, x, y)+H_x(t,x)\phi(y),\\
&\theta(t, x, y)\triangleq \vartheta(t, x, y)-\Theta(t,x)\phi'(y).
\end{aligned}
\end{equation}
Choose the above construction for $(u, v,\theta, h, g)$ to ensure
the divergence free conditions:
$$
\partial_x u+\partial_y v=0,\quad \partial_x h+\partial_y g=0,
$$
and homogeneous boundary conditions:
$$
\left.(u, v, \theta, \partial_y h, g)\right|_{y=0}=0,\quad
\underset{y\rightarrow \infty}{\lim}(u, \theta, h)(t,x,y)=0.
$$
Then, it is easy to check that
\begin{equation}\label{inc}
v=-\partial_y^{-1}\partial_x u,\quad g=-\partial_y^{-1}\partial_x h.
\end{equation}
At the same time, one can deduce from the relation \eqref{def1} that
$$
u=(u_1-U)+U(1-\phi'(y)),\quad
\theta=(\vartheta-\Theta)+\Theta(1-\phi'(y)),\quad
h=(h_1-H)+H(1-\phi'(y)),
$$
which, together with the construction of $\phi(y)$(see the definition
in \eqref{function1}), yields immediately
\begin{equation}
\|(u, \theta, h)(t)\|_{\mathcal{H}_l^m}-CM_0
\le \|(u_1-U, \vartheta-\Theta, h_1-H)(t)\|_{\mathcal{H}_l^m}
\le \|(u, \theta, h)(t)\|_{\mathcal{H}_l^m}+CM_0,
\end{equation}
where the quantity $M_0$ is defined in \eqref{111}.
By using the new unknown function $(u, v, \theta, h, g)$,
we can reformulate the original problem \eqref{eq3} as the following form
\begin{equation}\label{eq4}
\left\{
\begin{aligned}
&\partial_t u+[(u+U\phi')\partial_x+(v-U_x \phi)\partial_y]u
-[(h+H\phi')\partial_x+(g-H_x \phi)\partial_y]h\\
&~~-\mu{\partial_y}[(\theta+\Theta \phi'(y)+1){\partial_y}u]
+U_x \phi' u+U \phi'' v-H_x \phi' h-H \phi'' g-U\phi^{(3)}\theta-U\phi''\theta_y
=r_1,\\
&c_v\{\partial_t \theta\!+\!(u+U\phi')\partial_x \theta\!+\!(v-U_x \phi)\partial_y \theta\}
\!-\!\kappa \partial_y^2 \theta\!+\!c_v \Theta_x \phi' u\!+\!c_v \Theta \phi'' v
\!-\!\mu \theta (u_y)^2\!-\!\mu (U\phi'')^2 \theta\\
&\quad \!-2\mu U \phi'' \theta u_y\!-\!\mu \Theta \phi'(u_y)^2
\!-\!2\mu \Theta U \phi' \phi'' u_y\!-\!2\mu U\phi'' u_y\!-\!\mu (u_y)^2
\!-\!\nu (h_y)^2\!-\!2\nu H \phi' h_y=r_2,\\
&\partial_t h+[(u+U\phi')\partial_x+(v-U_x \phi)\partial_y]h
-[(h+H\phi')\partial_x+(g-H_x \phi)\partial_y]u
-\nu \partial_y^2 h\\
&\quad +H_x \phi' u+H \phi'' v- U_x \phi' h-U \phi'' g=r_3,\\
&\partial_x u+\partial_y v=0,\quad \partial_x h+\partial_y g=0,\\
\end{aligned}
\right.
\end{equation}
with the boundary and initial conditions
\begin{equation}\label{bc4}
\left.(u, v, \theta, \partial_y h, g)(t, x, y)\right|_{y=0}=0,
~\underset{y\rightarrow +\infty}{\lim}(u, \theta, h)(t, x, y)=0,
~\left.(u,\theta, h)(t, x, y)\right|_{t=0} = (u_0, \theta_0, h_0)(x,y),
\end{equation}
where
\begin{equation}\label{function2}
\left\{
\begin{aligned}
&r_1\triangleq U_t[(\phi')^2-\phi'-\phi \phi'']
+P_x[(\phi')^2-\phi \phi''-1]
+U\Theta[\phi'\phi^{(3)}+(\phi'')^2]+U\phi'\phi^{(3)},\\
&r_2\triangleq c_v \Theta_t [(\phi')^2\!-\!\phi']\!+\!c_v U_x \Theta \phi \phi''
\!+\!\kappa \Theta \phi^{(3)}\!-\!\mu \Theta \phi'(U \phi'')^2
\!+\!\mu(U \phi'')^2\!+\!\kappa(\Theta \phi'')^2\!+\!\nu (H \phi'')^2,\\
&r_3\triangleq H_t [(\phi')^2-\phi'+\phi \phi'']+\nu H \phi^{(3)},\quad
u_0 \triangleq u_{10}(x,y)-U(0,x)\phi'(y),\\
&\theta_0(x,y)\triangleq \vartheta_0(x,y)-\Theta(0, x)\phi'(y),
\quad  h_0 (x,y)\triangleq h_{10}(x,y)-H(0,x)\phi'(y).
\end{aligned}
\right.
\end{equation}
In view of the definition $\phi(y)$, it is easy to check that
\begin{equation}\label{function2}
\begin{aligned}
&r_1(t,x,y),~~r_2(t,x,y),~~r_3(t,x,y)=0, \quad y \ge 2R_0,\\
&r_1(t,x,y)=-P_x(t,x),~~r_2(t,x,y)=r_3(t,x,y)=0, \quad 0\le y \le R_0,\\
\end{aligned}
\end{equation}
and for any $t\in [0, T], \lambda \ge 0$ and $|\alpha|\le m$,
then one gets that
\begin{equation}\label{estimate1}
\|\langle y\rangle^\lambda D^\alpha (r_1, r_2, r_3)(t)\|_{L^2(\Omega)}
\le C \sum_{|\beta|\le |\alpha|+1}
\|\partial_\tau^\beta(U, \Theta, H, P_x)\|_{L^2(\mathbb{T}_x)}^2
\le CM_0^2.
\end{equation}
Furthermore, we have the following relation for the initial data
\begin{equation}\label{estimate2}
\|(u_0, \theta_0, h_0)\|_{\mathcal{H}_l^m}-CM_0
\le \|(u_{10}-U(0,x), \vartheta_0-\Theta(0,x), h_{10}-H(0,x))(t)\|_{\mathcal{H}_l^m}
\le \|(u_0, \theta_0, h_0)\|_{\mathcal{H}_l^m}+CM_0.
\end{equation}

Finally, from the transform \eqref{def1} and the relation \eqref{inc},
it is easy to know that the well-posedness theory in
Theorem \ref{theo1} is a corollary of the following result.

\begin{theorem}\label{theo2}
Let $m \ge 5$ be an integer, $l \ge 0$ be a real number,
and $(U, \Theta, H, P_x)(t,x)$ satisfies the hypotheses given in Theorem \ref{theo1}.
In addition, assume that for the problem \eqref{eq4}, the initial data satisfies
$\theta_0(x,y)+\Theta(0, x)\phi'(y) \ge 0, ~
(u_0(x, y), \theta_0(x, y), h_0(x,y))\in H_l^{3m+2}(\Omega)$,
and the compatibility condition up to $m-$th order.
Moreover, there exists a sufficiently small constant $\delta_0>0$, such that
\begin{equation}
~h_0(x, y)+H(0, x)\phi'(y)\ge 2 \delta_0,~
|\langle y \rangle^{l+1}\partial_y^i(u_0, \theta_0, h_0)(x, y)|\le (2\delta_0)^{-1},
~{\rm for}~i=1,2, (x, y)\in \Omega.
\end{equation}
Then, there exists a time $0<T_* \le T$ and a unique solution
$(u, v, \theta, h, g)(t,x,y)$ to the initial boundary value problem
\eqref{eq4}, such that
\begin{equation}\label{211}
(u, \theta, h)\in \underset{i=0}{\overset{m}{\bigcap}}
W^{i,\infty}(0,T_*; H_l^{m-i}(\Omega)),
\end{equation}
and
\begin{equation}\label{212}
(v, g)\in \underset{i=0}{\overset{m-1}{\bigcap}}
W^{i,\infty}(0,T_*; H_{-1}^{m-1-i}(\Omega)),\quad
(\partial_y v, \partial_y g)\in \underset{i=0}{\overset{m-1}{\bigcap}}
W^{i,\infty}(0,T_*; H_l^{m-1-i}(\Omega)).
\end{equation}
\end{theorem}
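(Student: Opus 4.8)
The plan is to obtain Theorem \ref{theo2} from a closed family of a priori energy estimates for the reformulated system \eqref{eq4}, combined with a standard regularization-and-compactness construction; the substance of the argument is the a priori estimate. Before estimating derivatives I would establish the two structural bounds that keep \eqref{eq4} well-behaved on a short interval $[0,T_*]$. Since the diffusion coefficient in the $u$-equation is $\mu(\theta+\Theta\phi'+1)$, I would first propagate the positivity $\vartheta=\theta+\Theta\phi'\ge 0$, so that $\mu(\vartheta+1)\ge\mu>0$ yields a genuine parabolic dissipation for $u$. Because the temperature equation \eqref{eq4}$_2$ carries no tangential diffusion $\p_x^2\vartheta$, this lower bound cannot be read off from a maximum principle; instead it relies on the affine form \eqref{assumption} of the viscosity, exploited directly in the energy identity for $\theta$. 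In parallel I would show that the non-degeneracy $h_1=h+H\phi'\ge 2\delta_0$ at $t=0$ persists as $h_1\ge\delta_0$ for $t\le T_*$. This bound plays the role of the classical monotonicity hypothesis and is precisely what allows the top-order tangential estimates to close.

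With these bounds secured, I would work with an energy functional of the form $\mathcal{E}(t)=\|(u,\theta,h)(t)\|_{\mathcal{H}_l^m}^2$, augmented by the weighted pointwise quantities controlling $\la y\ra^{l+1}\p_y^i(u,\theta,h)$ for $i=1,2$, and aim at a differential inequality $\frac{d}{dt}\mathcal{E}+\mathcal{D}\le\mathcal{P}(\mathcal{E})$, where $\mathcal{D}$ collects the dissipation supplied by $\mu(\vartheta+1)$, $\kappa$ and $\nu$. Applying $D^\a=\p_\tau^\beta\p_y^k$ to \eqref{eq4}, I would dispose of the normal-derivative directions ($k\ge1$) by solving the three evolution equations for $\p_y^2u$, $\p_y^2\theta$, $\p_y^2h$ and trading normal derivatives for tangential derivatives plus dissipation. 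The genuine difficulty is therefore concentrated in the pure tangential derivatives $\p_\tau^\beta$ with $|\beta|=m$.

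The main obstacle, flagged in the introduction, is the loss of one $x$-derivative produced by the convection terms. Because $v=-\p_y^{-1}\p_x u$ and $g=-\p_y^{-1}\p_x h$ each carry an extra $\p_x$, the contributions $(\p_\tau^\beta v)\p_y u$ and $(\p_\tau^\beta g)\p_y h$ in \eqref{eq4}$_1$, the symmetric cross-terms in \eqref{eq4}$_3$, and $(v-U_x\phi)\p_y\theta$ in \eqref{eq4}$_2$ all appear to cost a derivative. I would resolve the velocity--magnetic pair by exploiting the cancellation inherent in the symmetric structure of \eqref{eq4}$_1$ and \eqref{eq4}$_3$: integrating by parts in $y$ via $\p_x u=-\p_y v$ and $\p_x h=-\p_y g$, the top-order dangerous terms from the momentum and magnetic equations combine, and the residue is absorbed once $h_1$ is bounded below. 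For the temperature I would introduce a good unknown $\theta_\beta$ that subtracts from $\p_\tau^\beta\theta$ exactly the derivative-losing part generated by $u_2\p_y\vartheta$, so that the equation for $\theta_\beta$ no longer sees that term; the many remaining higher-order nonlinear terms in the temperature energy are then rendered harmless by taking the life span $T_*$ sufficiently small.

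To turn the a priori estimate into an actual solution I would construct approximations by a parabolic regularization of \eqref{eq4} --- for instance adding an artificial tangential viscosity $\ep\p_x^2$ to each evolution equation, which restores classical solvability while preserving the divergence-free relations \eqref{inc} and the boundary data \eqref{bc4}. Taking initial data in $H_l^{3m+2}(\Omega)$ (more regular than the target space, as is needed to run the scheme) I would verify that all estimates above hold uniformly in $\ep$ on a common interval $[0,T_*]$, then pass to the limit $\ep\to0$ by Aubin--Lions compactness to produce a solution with the regularity \eqref{211}--\eqref{212}. Uniqueness I would establish by deriving the system for the difference of two solutions and running a lower-order energy estimate in $\mathcal{H}_l^{m-2}$ --- below the top order, so that the derivative loss is absent --- using again the cancellation structure and the lower bound on $h_1$, and closing with Gr\"onwall's inequality.
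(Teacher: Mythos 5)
Your overall architecture (energy lower bound for the temperature via the affine viscosity rather than a maximum principle, persistence of $h+H\phi'\ge\delta_0$, weighted $\mathcal{H}^m_l$ estimates, parabolic regularization by $\epsilon\partial_x^2$ with corrected source terms, Aubin--Lions) matches the paper. But there is a genuine gap at the heart of the top-order tangential estimate. For $|\beta|=m$ the derivative-losing terms in the momentum and magnetic equations are $(\partial_y u+U\phi'')\partial_\tau^\beta v$, $(\partial_y h+H\phi'')\partial_\tau^\beta g$ and their counterparts, with $\partial_\tau^\beta v=-\partial_y^{-1}\partial_x\partial_\tau^\beta u$. The antisymmetric pairing of \eqref{eq4}$_1$ and \eqref{eq4}$_3$ only cancels the transport terms $(h+H\phi')\partial_x$ acting on the top-order unknowns; summing the two energy identities leaves $-\int A(\partial_\tau^\beta v\,\partial_\tau^\beta u-\partial_\tau^\beta g\,\partial_\tau^\beta h)-\int B(\partial_\tau^\beta v\,\partial_\tau^\beta h-\partial_\tau^\beta g\,\partial_\tau^\beta u)$ with $A=\partial_y u+U\phi''$, $B=\partial_y h+H\phi''$, which still carries $m+1$ tangential derivatives and cannot be integrated by parts away. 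The paper's resolution is to apply the good-unknown substitution to \emph{all three} quantities, not only to $\theta$: setting $u_\beta=\partial_\tau^\beta u-\eta_1\partial_\tau^\beta\psi$, $h_\beta=\partial_\tau^\beta h-\eta_3\partial_\tau^\beta\psi$ with $\psi=\partial_y^{-1}h$ the magnetic stream function and $\eta_i$ carrying $h+H\phi'$ in the denominator (see \eqref{function3}, \eqref{eq10}, \eqref{eq12}); since the $\psi$-equation contains $(h+H\phi')\partial_\tau^\beta v$, multiplying it by $\eta_1=A/(h+H\phi')$ produces exactly $A\partial_\tau^\beta v$ and the bad terms cancel identically. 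This is where the hypothesis $h+H\phi'\ge\delta_0$ actually enters; your proposal uses it only as a vague absorption device for $(u,h)$ and would not close.

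The uniqueness step has a second, related gap. You claim that estimating the difference in $\mathcal{H}^{m-2}_l$ avoids the derivative loss because you are "below the top order." That is not so: the difference system contains $(\partial_y u_2+U\phi'')\widetilde v$ with $\widetilde v=-\partial_y^{-1}\partial_x\widetilde u$, so at \emph{every} order the right-hand side costs one more $x$-derivative of the difference than the left-hand side controls; interpolating against the bounded $\mathcal{H}^m_l$ norms of the two solutions only yields an inequality of the type $\frac{d}{dt}E\le CE^{1/2}$, which fails the Osgood criterion and does not force $E\equiv 0$. The paper instead repeats the good-unknown transformation for the difference, $\overline u=\widetilde u-\eta_4\widetilde\psi$ etc.\ as in \eqref{df1}, and closes a genuine $L^2$ Gr\"onwall inequality. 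Your temperature unknown $\theta_\beta$ is the right idea; you need to recognize that the same device is indispensable for $u$ and $h$ in both the existence and the uniqueness arguments.
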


Therefore, our main task is to show the local-posedness theory
in the above Theorem  \ref{theo2}, and its proof will be given
in the following two sections.

\section{A priori estimates}\label{sb}
\quad
In this section, we will establish a prior estimates for the
nonlinear MHD boundary layer problem \eqref{eq4}.

\begin{proposition}\label{pro1}[Weighted estimates for $D^m(u, \theta, h)$]\\
Let $m \ge 5$ be an integer, $l \ge 0$ be a real number, and the hypotheses
for $(U, \Theta, H, P_x)(t,x)$ given in Theorem \ref{theo1} hold. Assume
that $(u, v, \theta, h, g)$ is a classical solution to the problem
\eqref{eq4} in $[0 ,T]$, satisfying that
$(u, \theta, h)\in L^\infty(0, T; \mathcal{H}_l^m),
(\partial_y u, \partial_y \theta, \partial_y h)\in L^2(0, T; \mathcal{H}_l^m)$,
and for sufficiently small $\delta_0$:
\begin{equation}\label{p1}
h(t, x, y)+H(t, x)\phi'(y)\ge \delta_0,~
\langle y\rangle^{l+1} \partial_y^i (u, \theta, h)(t, x, y)\le \delta_0^{-1},~
i=1,2, ~(t, x, y)\in [0, T]\times \Omega.
\end{equation}
Then, it holds that for small time,
\begin{equation}\label{p11}
\begin{aligned}
\underset{0\le s \le t}{\sup}\|(u, \theta, h)(s)\|_{\mathcal{H}^m_l}^2
&\le
\delta_0^{-4}
\{\mathcal{P}(M_0+\|(u_0, \theta_0, h_0)\|_{H^{2m}_l(\Omega)})
+C M_0^{10}t\}^{\frac{1}{2}}\\
&\quad \cdot
\left\{1-C\delta_0^{-48}t
(\mathcal{P}(M_0+\|(u_0, \theta_0, h_0)\|_{H^{2m}_l(\Omega)})
+C M_0^{10}t)^5\right\}^{-\frac{1}{10}}.
\end{aligned}
\end{equation}
Also, we have that for $i=1,2$,
\begin{equation}\label{p12}
\begin{aligned}
&\|\langle y\rangle^{l+1}\partial_y^i (u, \theta, h)(t, x, y)\|_{L^\infty(\Omega)}\\
&\le \|\langle y\rangle^{l+1}\partial_y^i (u_0, \theta_0, h_0)(x, y)\|_{L^\infty(\Omega)}\\
&\quad +C \delta_0^{-4}t
\left\{\mathcal{P}(M_0+\|(u_0, \theta_0, h_0)\|_{H^{2m}_l(\Omega)})
+C M_0^{10}t\right\}^{\frac{1}{2}}\\
&\quad \cdot
\left\{1-C\delta_0^{-48}t
(\mathcal{P}(M_0+\|(u_0, \theta_0, h_0)\|_{H^{2m}_l(\Omega)})
+C M_0^{10}t)^5\right\}^{-\frac{1}{10}}.
\end{aligned}
\end{equation}
and
\begin{equation}\label{p13}
\begin{aligned}
h(t, x, y)
&\ge h_0(x, y)
-C \delta_0^{-4}t
\left\{\mathcal{P}(M_0+\|(u_0, \theta_0, h_0)\|_{H^{2m}_l(\Omega)})
+C M_0^{10}t\right\}^{\frac{1}{2}}\\
&\quad \cdot
\left\{1-C\delta_0^{-48}t
(\mathcal{P}(M_0+\|(u_0, \theta_0, h_0)\|_{H^{2m}_l(\Omega)})
+C M_0^{10}t)^5\right\}^{-\frac{1}{10}}.
\end{aligned}
\end{equation}
Here $\mathcal{P}(\cdot)$ denotes a nondecreasing polynomial function.
\end{proposition}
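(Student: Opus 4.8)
The plan is to run a weighted energy argument on the reformulated system \eqref{eq4}, applying the full derivative operator $D^\alpha$ for every $|\alpha|\le m$ (including the time derivatives recorded in $\beta_1$) and summing the resulting weighted $L^2$ identities obtained by pairing against $\langle y\rangle^{2(k+l)}D^\alpha(u,\theta,h)$. First I would extract the dissipation: integrating the viscous term $-\mu\partial_y[(\theta+\Theta\phi'+1)\partial_y u]$ and the diffusion terms by parts in $y$, using the homogeneous boundary conditions in \eqref{bc4} and the cut-off $\phi$ to kill the contributions at $y=+\infty$, produces coercive terms of size $\|\langle y\rangle^{k+l}\partial_y D^\alpha(u,\theta,h)\|_{L^2}^2$; positivity of the leading coefficient $\theta+\Theta\phi'+1$ is guaranteed by the lower bound on the temperature coming from $\vartheta_0\ge0$, $\Theta\ge0$ together with the affine form \eqref{assumption} of $\mu$, which is precisely why that structural hypothesis is imposed. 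The bulk of the remaining commutators and nonlinear terms would be controlled by the weighted Sobolev, Hardy and interpolation inequalities of Appendix \ref{appendixA}, bounding them by $\mathcal{P}(\|(u,\theta,h)\|_{\mathcal{H}_l^m})$ times the energy plus forcing terms handled through \eqref{estimate1} by powers of $M_0$. Throughout, $v$ and $g$ are replaced by $-\partial_y^{-1}\partial_x u$ and $-\partial_y^{-1}\partial_x h$ via \eqref{inc}, the $\langle y\rangle^{-1}$ weight lost in $\partial_y^{-1}$ being absorbed by the weights and the pointwise controls in \eqref{p1}.

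The heart of the argument, and the step I expect to be the main obstacle, is the loss of one tangential derivative. When $|\alpha|=m$ and all derivatives fall on the transport coefficient, the terms $(D^\alpha v)\partial_y u$ and $(D^\alpha g)\partial_y h$ in the $u$-equation — and symmetrically $(D^\alpha v)\partial_y h$ and $(D^\alpha g)\partial_y u$ in the $h$-equation — contain $\partial_y^{-1}\partial_x^{m+1}$ of $u$ and $h$, that is, one derivative more than $\mathcal{H}_l^m$ controls. My plan is not to estimate these separately but to exploit the symmetric coupling between the velocity and induction equations: adding the velocity estimate paired with $D^\alpha u$ to the magnetic estimate paired with $D^\alpha h$, the top-order loss terms assemble into a combination that cancels after integration by parts in $x$ and $y$, using $\partial_y v=-\partial_x u$ and $\partial_y g=-\partial_x h$. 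This cancellation is the MHD analogue of the monotonicity cancellation in the classical Prandtl theory, and it is made quantitatively effective by the non-degeneracy $h+H\phi'\ge\delta_0$ from \eqref{p1}, which is where the negative powers of $\delta_0$ originate.

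A second derivative-loss difficulty sits in the energy equation, where the convective term $(v-U_x\phi)\partial_y\theta$ would force an $x$-derivative estimate of $\theta$. Following the strategy announced in the introduction, I would avoid this by working with the modified temperature quantity $\theta_\beta$ of \eqref{function3}, tailored so that the offending $x$-derivative is cancelled; the additional higher-order quadratic sources peculiar to \eqref{eq4}$_2$ are then harmless on a short interval and are absorbed into $\mathcal{P}(\cdot)$, which is legitimate since only local-in-time control is sought. Collecting these estimates yields a differential inequality of the schematic form $\frac{d}{dt}\,\mathcal{E}(t)\le C\delta_0^{-a}\big(\mathcal{P}(M_0+\|(u_0,\theta_0,h_0)\|_{H^{2m}_l(\Omega)})+CM_0^{10}t+\mathcal{E}(t)\big)^{p}$ for $\mathcal{E}=\|(u,\theta,h)\|_{\mathcal{H}_l^m}^2$, a super-linear ODE whose explicit integration produces the bracketed expression together with the exponent $-\frac{1}{10}$ and the power $\delta_0^{-48}$ appearing in \eqref{p11}; tracking the powers of $\delta_0^{-1}$ and $M_0$ through each nonlinear estimate is exactly what fixes these numbers, and the consumption of two spatial derivatives per time derivative at $t=0$ (through the equations) is what turns the assumed $H^{3m+2}_l$ data into the $H^{2m}_l(\Omega)$ norm of the initial energy.

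Finally, the pointwise statements \eqref{p12} and the lower bound \eqref{p13} follow a posteriori. For \eqref{p13} I would integrate the $h$-equation in time, $h(t)=h_0+\int_0^t\partial_s h\,ds$, and bound $\partial_s h$ in weighted $L^\infty$ through the embedding $\mathcal{H}_l^m\hookrightarrow L^\infty$ (valid for $m\ge5$) applied to the $\mathcal{H}_l^m$ bound just obtained; \eqref{p12} is analogous, integrating the equations satisfied by $\langle y\rangle^{l+1}\partial_y^i(u,\theta,h)$ for $i=1,2$ and again invoking the energy estimate. These three bounds together are precisely what is needed to recover the a priori hypotheses \eqref{p1} with strict improvement on a short time, thereby closing the continuity argument carried out in the next section.
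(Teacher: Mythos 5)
There is a genuine gap at the heart of your argument: the mechanism you propose for the tangential derivative loss in the velocity and induction equations does not work. After applying $\partial_\tau^\beta$ with $|\beta|=m$, the offending terms are $(\partial_y u+U\phi'')\partial_\tau^\beta v$, $(\partial_y h+H\phi'')\partial_\tau^\beta g$ and their counterparts in the $h$-equation, with $\partial_\tau^\beta v=-\partial_y^{-1}\partial_x\partial_\tau^\beta u$. Pairing the $u$-equation with $D^\alpha u$ and the $h$-equation with $D^\alpha h$ and adding only cancels the \emph{transport} terms $[(h+H\phi')\partial_x+(g-H_x\phi)\partial_y]$ acting on the top derivatives (the usual antisymmetric magnetic-tension cancellation, which the paper also uses in \eqref{3410}--\eqref{3411}); it does nothing for the commutator terms where all $m$ derivatives fall on $v$ or $g$. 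Writing $W=\partial_y^{-1}\partial_\tau^\beta u$, the term $\int(\partial_y u)\,\partial_\tau^\beta v\,\partial_\tau^\beta u$ is a Jacobian-type integral $\int a\,\partial_xW\,\partial_yW$ that no integration by parts in $x$ and $y$ reduces to quantities controlled by $\|u\|_{\mathcal{H}^m_l}$. The cancellation that actually works — and the reason $h+H\phi'\ge\delta_0$ enters — is the one the paper uses for \emph{all three} unknowns, not just $\theta$: one subtracts $\eta_i\partial_\tau^\beta\psi$ from each of $\partial_\tau^\beta u$, $\partial_\tau^\beta\theta$, $\partial_\tau^\beta h$, where $\psi=\partial_y^{-1}h$ is the magnetic stream function, $\eta_1=(\partial_y u+U\phi'')/(h+H\phi')$, etc.; the equation \eqref{eq11} for $\partial_\tau^\beta\psi$ carries the term $(h+H\phi')\partial_\tau^\beta v$, so this subtraction eliminates every $\partial_\tau^\beta v$ and $\partial_\tau^\beta g$ from the system \eqref{eq12}. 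You invoke this device only for the temperature, so your estimate for $u$ and $h$ at the top tangential order cannot close as written.

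Two smaller points. First, the boundary contributions you dismiss are not at $y=+\infty$ but at $y=0$: for $|\alpha|=m$ with $k\ge1$, $D^\alpha u|_{y=0}\neq0$ and the trace term $-\mu\int_{\mathbb{T}_x}D^\alpha[(\theta+1)\partial_y u]\cdot D^\alpha u|_{y=0}\,dx$ cannot be estimated by trace inequalities alone; the paper must substitute the equation for $(\theta+\Theta\phi'+1)D^\gamma\partial_y^2u$ (the ``$\partial_t-\partial_y^2$'' device announced in the introduction and carried out in Step~4, Case~2 of the proof of \eqref{336}), and this step also relies on the specific form \eqref{assumption} of the viscosity. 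Second, the lower bound $\theta+\Theta\phi'+1\ge1$ is not immediate from $\vartheta_0\ge0$ and $\Theta\ge0$; since \eqref{eq3}$_2$ has no $\partial_x^2$ diffusion one cannot invoke a minimum principle, and the paper proves it by a truncation (Stampacchia) argument in Lemma~\ref{lemma3.2}. Your remaining steps — the ODE comparison yielding the exponent $-\tfrac1{10}$, the counting that converts $H^{3m+2}_l$ data into the $H^{2m}_l$ norm in $F(0)$, and the time-integration arguments for \eqref{p12}--\eqref{p13} — do match the paper's route.
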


\subsection{Lower bound estimate for temperature}

\quad In this subsection, we obtain lower bound estimate for the temperature field,
which will play an important role in giving
$L^2(0,T; \mathcal{H}_l^m)-$norm for the quantity $\partial_y u$.

\begin{lemma}\label{lemma3.2}
Let $m \ge 5$ be an integer, $l \ge 0$ be a real number, and the hypotheses for
$(U, \Theta, H, P_x)(t, x)$ given in Theorem \ref{theo1} hold on. Assume that
$(u, v, \theta, g, h)$ is a classical solution to the nonlinear problem
\eqref{eq4} in $[0, T]$,
and satisfies $(u, \theta, h)\in L^\infty(0, T; \mathcal{H}^m_l),
(\partial_y u, \partial_y \theta, \partial_y h)\in L^2(0, T; \mathcal{H}_l^m)$.
Then, it holds that
\begin{equation}\label{321}
\theta(t, x, y)+\Theta(t, x)\phi'(y) \ge 0,
\end{equation}
for almost everywhere $(t, x, y)\in [0, T]\times \mathbb{T}_x \times \mathbb{R}^+$.
\end{lemma}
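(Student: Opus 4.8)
The plan is to recognize that \eqref{321} is nothing but a minimum principle for the physical temperature and to prove it by an $L^2$ energy estimate on its negative part, thereby avoiding the missing $\partial_x^2\vartheta$ diffusion. Since $\theta+\Theta\phi'=\vartheta$ by the change of unknowns \eqref{def1}, the asserted inequality $\theta+\Theta\phi'\ge 0$ is exactly $\vartheta(t,x,y)\ge 0$. It is cleanest to argue directly on the original energy equation \eqref{eq3}$_2$,
\[
c_\upsilon(\partial_t \vartheta+u_1\partial_x\vartheta+u_2\partial_y\vartheta)-\kappa\partial_y^2\vartheta=\mu(\vartheta+1)(\partial_y u_1)^2+\nu(\partial_y h_1)^2,
\]
which is equivalent to \eqref{eq4}$_2$ under \eqref{def1}. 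The structural facts I will use are the Dirichlet datum $\vartheta|_{y=0}=0$, the matching $\vartheta\to\Theta\ge 0$ as $y\to+\infty$, the non-slip condition $u_2|_{y=0}=0$, the incompressibility $\partial_x u_1+\partial_y u_2=0$, and the initial sign $\vartheta_0\ge 0$ (equivalently $\theta_0+\Theta(0,x)\phi'\ge 0$ in the hypotheses of Theorem \ref{theo2}).

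First I would introduce the negative part $\vartheta_-\triangleq\max\{-\vartheta,0\}\ge 0$, which vanishes at $y=0$ (by $\vartheta|_{y=0}=0$) and decays as $y\to+\infty$ (by $\Theta\ge 0$), and satisfies $\partial\vartheta_-=-\partial\vartheta$ on $\{\vartheta<0\}$ and $\partial\vartheta_-=0$ elsewhere. Multiplying \eqref{eq3}$_2$ by $-\vartheta_-$ and integrating over $\Omega$: the transport part becomes $\tfrac{c_\upsilon}{2}\tfrac{d}{dt}\|\vartheta_-\|_{L^2(\Omega)}^2$ after using $\partial_x u_1+\partial_y u_2=0$ and the vanishing of the boundary contributions (at $y=0$ via $u_2|_{y=0}=0$, at $y=+\infty$ via decay); the diffusion part, after integration by parts in $y$ with the boundary terms killed by $\vartheta_-|_{y=0}=0$ and the decay, yields the good term $+\kappa\|\partial_y\vartheta_-\|_{L^2(\Omega)}^2$. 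On the set $\{\vartheta<0\}$ one has $\vartheta+1=1-\vartheta_-$, so the source contributes $-\mu\int\vartheta_-(1-\vartheta_-)(\partial_y u_1)^2-\nu\int\vartheta_-(\partial_y h_1)^2$; writing $\vartheta_-(1-\vartheta_-)=\vartheta_--\vartheta_-^2$, the $\nu$-term and the $-\mu\int\vartheta_-(\partial_y u_1)^2$ term both have the favorable sign and may be discarded, leaving only $+\mu\int\vartheta_-^2(\partial_y u_1)^2$. The resulting inequality is
\[
\frac{c_\upsilon}{2}\frac{d}{dt}\|\vartheta_-\|_{L^2(\Omega)}^2+\kappa\|\partial_y\vartheta_-\|_{L^2(\Omega)}^2\le \mu\|\partial_y u_1\|_{L^\infty(\Omega)}^2\,\|\vartheta_-\|_{L^2(\Omega)}^2.
\]
Since $m\ge 5$, the embedding $\mathcal{H}_l^m\hookrightarrow L^\infty$ together with $\partial_y u\in L^2(0,T;\mathcal{H}_l^m)$ (and the smoothness of $U$) gives $\|\partial_y u_1(t)\|_{L^\infty(\Omega)}^2\in L^1(0,T)$, so Gr\"onwall's lemma applies, and because $\vartheta_0\ge 0$ forces $\|\vartheta_-(0)\|_{L^2(\Omega)}=0$, I conclude $\vartheta_-\equiv 0$, i.e.\ $\vartheta\ge 0$ a.e., which is \eqref{321}.

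The main obstacle is the sign of the source term: the production term $\mu(\vartheta+1)(\partial_y u_1)^2$ has the good sign precisely where $-1\le\vartheta<0$, but turns unfavorable where $\vartheta<-1$. The key observation that makes the scheme work is that this dangerous contribution appears as $+\mu\int\vartheta_-^2(\partial_y u_1)^2$, which is \emph{quadratic} in $\vartheta_-$ and hence absorbable by Gr\"onwall with an $L^1_t$ coefficient; this quadratic structure is exactly what the affine viscosity law \eqref{assumption}, $\mu(\vartheta)=\mu(\vartheta+1)$, guarantees, and it is the reason that assumption is imposed. A secondary technical point I would address is the rigorous justification of the chain rule and Stampacchia-type calculus for $\vartheta_-$ and $\vartheta_-^2$, which is licensed by the Sobolev regularity of $\vartheta$ assumed in the hypotheses, as well as the careful vanishing of all boundary traces at $y=0$ and $y\to+\infty$.
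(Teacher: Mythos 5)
Your proposal is essentially the paper's proof: both arguments are an $L^2$ energy estimate on the negative part of the physical temperature $\vartheta=\theta+\Theta\phi'$, both exploit exactly the structure you identify --- the affine viscosity law \eqref{assumption} makes the dangerous part of the source quadratic in the truncated function, so it is absorbed by Gr\"onwall against $\|\partial_y u_1\|_{L^\infty(\Omega)}^2\in L^1(0,T)$ (compare your final differential inequality with \eqref{323}--\eqref{324}) --- and both conclude from $\vartheta_0\ge0$. The one substantive difference is how the unbounded $y$-direction is handled. The paper first uses $\vartheta\to\Theta$ to fix $R_1$ with $\vartheta\ge\tfrac12\inf\Theta\ge0$ for $y\ge R_1$, then runs the energy estimate only on the strip $\Omega_0=\mathbb{T}_x\times[0,R_1+1]$ with the level set $k=-\varepsilon_0<0$ (letting $\varepsilon_0\to0^+$ at the end); since $(k-\vartheta)_+$ then vanishes identically near $y=R_1+1$, every boundary term in $y$ except the one at $y=0$ disappears for free. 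Your whole-domain version instead must dispose of the transport boundary term $\int_{\mathbb{T}_x}u_2\,\vartheta_-^2\,dx$ as $y\to+\infty$, and this is more delicate than ``decay of $\vartheta_-$'' suggests: $u_2=v-U_x\phi$ grows linearly in $y$, and when $\inf\Theta=0$ the far-field condition only gives $\vartheta_-\to0$, not $\vartheta_-\equiv0$ for large $y$, so you need a quantitative decay rate for $\vartheta_-^2$ beating $y$ (extractable from $\theta\in\mathcal{H}^m_l$, but not stated). This is the one step you should either justify carefully or replace by the paper's localization; everything else in your argument is sound and matches the paper's mechanism.
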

\begin{proof}
By virtue of $\underset{y\rightarrow +\infty}{\lim}\vartheta(t, x, y)=\Theta(t, x)$,
for all $(x, t) \in \mathbb{T}_x \times [0, T]$.
Then there exists a positive constant $R_1$ such that for $y \ge R_1$, we have
$$
\vartheta(t, x, y)-\Theta(t, x)\ge -\frac{1}{2}\underset{(t,x)}{\inf}\Theta(t,x),
$$
which implies
\begin{equation}\label{322}
\vartheta(t, x, y)\ge \Theta(t, x)-\frac{1}{2}\underset{(t,x)}{\inf}\Theta(t,x)\ge
\frac{1}{2}\underset{(t,x)}{\inf}\Theta(t,x),
\end{equation}
for all $(t, x, y)\in [0, T]\times \mathbb{T}_x \times [R_1, +\infty)$.
Let
$l_1\triangleq \underset{(x,y)\in \mathbb{T}_x \times [0, R_1+1]}{\min}\vartheta_0(x, y)
\ge 0,
l_2\triangleq \frac{1}{2}\underset{(t,x)}{\inf}\Theta(t,x)\ge 0$,
$l\triangleq \min\{l_1, l_2, 0\}$,
and  $\Omega_0 \triangleq \mathbb{T}_x \times [0, R_1+1]$.
For any $0<\varepsilon_0<1$, let $k\triangleq l-\varepsilon_0$,
multiplying \eqref{eq3}$_2$ by $(k-\vartheta)_+$ and
integrating the resulting equality over
$[0, t]\times \Omega_{0}$, we find
\begin{equation}\label{323}
\begin{aligned}
&\frac{c_v}{2} \int_0^t \frac{d}{d\tau}\int_{\Omega_{0}} |(k-\vartheta)_+|^2 dxdy d\tau
+\kappa \int_0^t \int_{\Omega_{0}} |\partial_y(k-\vartheta)_+|^2 dxdy d\tau\\
&=\mu \int_0^t \int_{\Omega_0} (\partial_y u_1)^2 |(k-\vartheta)_+|^2 dxdy d\tau
-\mu(k+1) \int_0^t \int_{\Omega_0} (\partial_y u_1)^2 (k-\vartheta)_+ dxdy d\tau\\
&\quad -\nu \int_0^t \int_{\Omega_0} (\partial_y h_1)^2 (k-\vartheta)_+ dxdy d\tau,
\end{aligned}
\end{equation}
here $f_+\triangleq \max\{f, 0\}\ge 0$.
It is easy to deduce from the \eqref{323} that
\begin{equation}\label{324}
\int_{\Omega_0} |(k-\vartheta)_+(t)|^2dxdy
\le 2\mu c_v^{-1}\int_0^t \|\partial_y u_1\|^2_{L^\infty(\Omega)}
\int_{\Omega_0} |(k-\vartheta)_+|^2 dxdy ~d\tau.
\end{equation}
Then, the application of the Gr\"{o}nwall inequality to \eqref{324} yields immediately
\begin{equation*}
(k-\vartheta)_+(x, y, t)=0, \quad {\rm a.e.}~ (t, x, y)\in
[0, T] \times \Omega_0,
\end{equation*}
which, implies that
\begin{equation}\label{325}
\vartheta(t, x, y)\ge k =l-\varepsilon_0,\quad {\rm a.e.}~ (t, x, y)\in
[0, T] \times \Omega_0.
\end{equation}
Let $\varepsilon_0 \rightarrow 0^+$ in \eqref{325}, then
it is easy to deduce that
\begin{equation*}
\vartheta(t, x, y)\ge l,\quad {\rm a.e.}~ (t, x, y)\in
[0, T] \times \Omega_0,
\end{equation*}
which, together with \eqref{322}, yields
$\vartheta(t, x, y) \ge 0, {\rm for ~all~} (t, x, y)\in [0, T]\times \mathbb{T}_x \times \mathbb{R}^+$. Then, the construction of function \eqref{def1} helps us  complete
the proof of Lemma \ref{lemma3.2}.
\end{proof}

\begin{remark}
The equation \eqref{eq3}$_2$ is not a standard parabolic type equation
due to the lack of viscous term $\partial_x^2 \vartheta$.
Then, we can't apply the minimum principle to obtain the estimate \eqref{321}
for  temperature field.
In order to reach the target \eqref{321}, we  assume the viscosity
function $\mu(\vartheta^\varepsilon)$ obeys the form \eqref{assumption}.
\end{remark}

\subsection{Weighted $H_l^m-$ estimates with norm derivatives}
\quad
For any $|\alpha|=|\beta|+k\le m$ and $|\beta|\le m-1$,
the weighted estimates on $D^\alpha(u, \theta, h)$ can be obtained by the
standard energy method since one order regularity loss is allowed.
Then, we can obtain the following estimates:

\begin{lemma}\label{lemma3.3}
[Weighted estimates for $D^\alpha (u, \theta, h)$ with $|\alpha|\le m, |\beta|\le m-1$]\\
Let $m \ge 5$ be an integer, $l \ge 0$ be a real number, and the hypotheses for
$(U, \Theta, H, P_x)(t, x)$ given in Theorem \ref{theo1} hold. Assume that
$(u, v, \theta, g, h)(t, x, y)$ is a classical solution to the problem \eqref{eq4} in $[0, T]$,
and satisfies $(u, \theta, h)(t, x, y) \in L^\infty(0, T; \mathcal{H}^m_l),
(\partial_y u, \partial_y \theta, \partial_y h)(t, x, y)\in L^2(0, T; \mathcal{H}_l^m)$.
Then, there exists a positive constant $C$, depending on $m, l$ and $\phi$ such that
for any small $0<\delta_1 <1$
\begin{equation}\label{331}
\begin{aligned}
&\sum_{\tiny\substack{|\alpha|\le m \\ |\beta|\le m-1}}
\left(\frac{d}{dt}\|D^\alpha(u, \sqrt{c_v}\theta, h)(t)\|_{L^2_{k+l}(\Omega)}^2
+c_0\|(D^\alpha \partial_y u, D^\alpha \partial_y \theta,
D^\alpha \partial_y h)(t)\|_{L^2_{k+l}(\Omega)}^2\right)\\
&\le \delta_1\|(\partial_y u, \partial_y \theta, \partial_y h)(t)\|_{\mathcal{H}_0^m}^2
+C\delta_1^{-1}(\|(u, \theta, h)(t)\|_{\mathcal{H}^m_l}^8+1)\\
&\quad +\sum_{\tiny\substack{|\alpha|\le m \\ |\beta|\le m-1}}
\|D^\alpha(r_1, r_2, r_3)(t)\|_{L^2_{l+k}(\Omega)}^2
+C\sum_{|\beta|\le m+2}\|\partial_\tau^\beta(U, \Theta, H, P)(t)\|_{L^2(\mathbb{T}_x)}^8,
\end{aligned}
\end{equation}
where $c_0\triangleq \min \{\mu, \kappa, \nu\}.$
\end{lemma}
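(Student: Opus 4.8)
The plan is to run a weighted energy estimate on the $D^\alpha$-differentiated version of system \eqref{eq4}. For each admissible $\alpha=(\beta_1,\beta_2,k)$ with $|\alpha|\le m$ and $|\beta|\le m-1$, I would apply $D^\alpha$ to the $u$-, $\theta$- and $h$-equations and pair them in $L^2(\Omega)$ against $\langle y\rangle^{2(k+l)}D^\alpha u$, $c_v\langle y\rangle^{2(k+l)}D^\alpha\theta$ and $\langle y\rangle^{2(k+l)}D^\alpha h$ respectively, then sum over all such $\alpha$. The time-derivative terms assemble into $\tfrac12\frac{d}{dt}\|D^\alpha(u,\sqrt{c_v}\theta,h)\|_{L^2_{k+l}}^2$. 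For the principal viscous contribution $\mu\partial_y[(\theta+\Theta\phi'+1)\partial_y D^\alpha u]$ I would integrate by parts once in $y$; since $\theta+\Theta\phi'+1\ge1$ by Lemma \ref{lemma3.2}, this produces, up to a lower-order weight term sent to the right-hand side, the coercive dissipation bounded below by $c_0\|D^\alpha\partial_y u\|_{L^2_{k+l}}^2$, and similarly with $\kappa,\nu$ for $\theta,h$, where $c_0=\min\{\mu,\kappa,\nu\}$. The boundary contributions at $y=0$ vanish by \eqref{bc4}, and those at $y=+\infty$ by the cut-off built into \eqref{function1}--\eqref{def1}.

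The decisive structural point is the treatment of the transport and magnetic-coupling terms at top order. For the self-transport $(v-U_x\phi)\partial_y u$ and $(u+U\phi')\partial_x u$, the principal pieces $(v-U_x\phi)\partial_y D^\alpha u$ and $(u+U\phi')\partial_x D^\alpha u$, once paired against $D^\alpha u$, are integrated by parts in $y$ and $x$; using $\partial_x u+\partial_y v=0$ the resulting symmetric terms reduce to harmless factors $\partial_x u$ and weight derivatives $\partial_y\langle y\rangle^{2(k+l)}$. For the coupling, the term $-(h+H\phi')\partial_x D^\alpha h$ in the $u$-equation and $-(h+H\phi')\partial_x D^\alpha u$ in the $h$-equation generate, after an $x$-integration by parts in the former, two top-order integrals $\pm\int(h+H\phi')(D^\alpha\partial_x u)(D^\alpha h)\langle y\rangle^{2(k+l)}$ that cancel exactly when the $u$- and $h$-estimates are added. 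This is the analogue of the MHD cancellation of \cite{Liu-Xie-Yang} and is why no monotonicity is needed.

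The remaining commutators are then genuinely of lower order. Expanding, e.g., $[D^\alpha,(v-U_x\phi)\partial_y]u$ into products $(D^{\alpha'}(v-U_x\phi))(D^{\alpha-\alpha'}\partial_y u)$ with $|\alpha'|\ge1$, the worst case places all tangential derivatives on $v=-\partial_y^{-1}\partial_x u$, yielding $\partial_y^{-1}\partial_\tau^\beta\partial_x u$; since $|\beta|\le m-1$ this carries at most $m$ tangential derivatives, which is exactly the one order of loss the $\mathcal{H}_l^m$-norm can absorb (after a Hardy-type bound for $\partial_y^{-1}$). Each such product, together with the variable-coefficient commutators from the viscous term and the nonlinear source terms of $r_2$ (e.g. $\mu\theta(u_y)^2$, $2\mu U\phi''\theta u_y$), is estimated by Sobolev embedding, the weighted Moser inequalities of Appendix \ref{appendixA}, and the pointwise bounds \eqref{p1} on $\langle y\rangle^{l+1}\partial_y^i(u,\theta,h)$; collecting the multilinear factors and applying Cauchy--Schwarz produces the eighth power $\|(u,\theta,h)\|_{\mathcal{H}_l^m}^8$. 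Every commutator containing a top-order normal derivative $\partial_y D^\alpha(u,\theta,h)$ is split off by Young's inequality, which is what generates the $\delta_1\|(\partial_y u,\partial_y\theta,\partial_y h)\|_{\mathcal{H}_0^m}^2$ term together with the compensating $C\delta_1^{-1}(\|(u,\theta,h)\|_{\mathcal{H}_l^m}^8+1)$ term; the contributions $D^\alpha(r_1,r_2,r_3)$ and the outer-flow/$\phi^{(j)}$ terms, bounded via \eqref{estimate1} and the $\mathbb{T}_x$-norms of $(U,\Theta,H,P)$, furnish the last two sums in \eqref{331}.

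I expect the main obstacle to be the weight bookkeeping rather than any single estimate: because the weight exponent $k+l$ tracks the normal-derivative count, every integration by parts in $y$ and every use of $v=-\partial_y^{-1}\partial_x u$ shifts this exponent, so one must verify that the algebraic decay supplied by \eqref{p1} and by the $\langle y\rangle^{l+1}$-weighting of the first normal derivatives is always enough to close each product without a net weight deficit, and simultaneously that the coupling integrals cancel with the exact weight $\langle y\rangle^{2(k+l)}$ carried on both sides.
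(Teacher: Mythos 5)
Your overall architecture --- applying $D^\alpha$, testing against $\langle y\rangle^{2(k+l)}D^\alpha(u,c_v\theta,h)$, extracting coercive dissipation from $\theta+\Theta\phi'+1\ge 1$ (Lemma \ref{lemma3.2}), exploiting the $u$--$h$ cancellation of the magnetic coupling, and absorbing the commutators with the weighted Moser inequalities since $|\beta|\le m-1$ allows one order of tangential loss --- matches the paper's proof. However, there is one genuine gap: you assert that the boundary contributions at $y=0$ vanish by \eqref{bc4}. They do not. The conditions \eqref{bc4} give the vanishing of $(u,v,\theta,\partial_y h,g)$ at $y=0$, hence only of their \emph{tangential} derivatives; for a multi-index $\alpha=(\beta,k)$ with $k\ge 1$ the traces $D^\alpha u|_{y=0}$, $D^\alpha\theta|_{y=0}$ and $\partial_y D^\alpha h|_{y=0}$ are in general nonzero, so integrating the dissipative terms by parts in $y$ leaves nontrivial boundary integrals such as $-\mu\int_{\mathbb{T}_x}D^\alpha[(\theta+1)\partial_y u]\cdot D^\alpha u|_{y=0}\,dx$. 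This is exactly the difficulty the paper singles out in the introduction (``the lack of high-order boundary conditions at $y=0$''), and it is where most of the work in the proof of \eqref{336} lies.

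Concretely, the paper's Step 4 treats this term in two cases. For $|\alpha|\le m-1$ the trace inequality \eqref{e1} suffices, since $\partial_y D^\alpha[(\theta+1)\partial_y u]$ still has at most $m+1$ derivatives and lands in $L^2$. For $|\alpha|=m$ (which forces $k\ge1$ because $|\beta|\le m-1$) this would cost too many normal derivatives, and the paper instead writes $D^\alpha=D^\gamma\partial_y$ with $|\gamma|=m-1$ and substitutes the $u$-equation to replace $(\theta+\Theta\phi'+1)D^\gamma\partial_y^2u$ by $D^\gamma\partial_t u+D^\gamma I_1-\cdots-D^\gamma r_1$; that is, it trades two normal derivatives at the boundary for one time derivative plus convection and source terms, all of which are then controlled by \eqref{e1}, \eqref{e3} and Cauchy's inequality (this is where the viscosity law \eqref{assumption} is used, so that the coefficient of $\partial_y^2u$ is exactly the one appearing in the equation). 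The analogous boundary terms for $\theta$ and $h$ in \eqref{c132}--\eqref{c133} are handled the same way with the $\theta$- and $h$-equations. Without this step, or some substitute for it, your energy identity cannot be closed; the rest of your argument is sound and follows the paper's route.
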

\begin{proof}
Applying the operator $D^\alpha=\partial_\tau^\beta \partial_y^k$
for $\alpha=(\beta, k)=(\beta_1, \beta_2, k)$, satisfying
$|\alpha|=|\beta|+k\le m, |\beta|\le m-1$, to the equations
\eqref{eq4}$_1$, \eqref{eq4}$_2$ and \eqref{eq4}$_3$ respectively,
it yields that
\begin{equation}\label{332}
\left\{
\begin{aligned}
&\partial_t D^\alpha u=D^\alpha r_1
+\mu \partial_y D^\alpha[(\theta+\Theta \phi'+1)\partial_y u]-
D^\alpha I_1,\\
&c_v \partial_t D^\alpha \theta=D^\alpha r_2
+\kappa \partial_y^2 D^\alpha \theta-D^\alpha I_2,\\
&\partial_t D^\alpha h=D^\alpha r_3+\nu \partial_y^2 D^\alpha h-D^\alpha I_3,
\end{aligned}
\right.
\end{equation}
where the functions $I_i(i=1,2,3)$ are defined as follows:
\begin{equation*}\label{333}
\left\{
\begin{aligned}
I_1&\triangleq[(u+U\phi')\partial_x+(v-U_x \phi)\partial_y]u
-[(h+H\phi')\partial_x+(g-H_x \phi)\partial_y]h\\
&\quad+U_x \phi' u+U \phi'' v-H_x \phi' h-H \phi'' g-U\phi^{(3)}\theta-U\phi''\theta_y;\\
I_2&\triangleq c_v[(u+U\phi')\partial_x \!+\!(v-U_x \phi)\partial_y ]\theta
+\!c_v \Theta_x \phi' u\!+\!c_v \Theta \phi'' v
\!-\!\mu \theta (u_y)^2\!-\!\mu (U\phi'')^2 \theta\\
&\quad \!-2\mu U \phi'' \theta u_y\!-\!\mu \Theta \phi'(u_y)^2
\!-\!2\mu \Theta U \phi' \phi'' u_y\!-\!2\mu U\phi'' u_y\!-\!\mu (u_y)^2
\!-\!\nu (h_y)^2\!-\!2\nu H \phi' h_y;\\
I_3&\triangleq [(u+U\phi')\partial_x+(v-U_x \phi)\partial_y]h
-[(h+H\phi')\partial_x+(g-H_x \phi)\partial_y]u+H_x \phi' u\\
&\quad +H \phi'' v- U_x \phi' h-U \phi'' g.\\
\end{aligned}
\right.
\end{equation*}
Multiplying \eqref{332}$_1$ by $\langle y \rangle^{2l+2k}D^\alpha u$,
\eqref{332}$_2$ by $\langle y \rangle^{2l+2k}D^\alpha \theta$,
and  \eqref{332}$_3$ by $\langle y \rangle^{2l+2k}D^\alpha h$ respectively,
and integrating them over $\Omega$ with respect to the spatial variables $x$
and $y$, we find
\begin{equation}\label{334}
\begin{aligned}
&\frac{1}{2}\frac{d}{dt}\int_\Omega \langle y \rangle^{2k+2l} |D^\alpha u|^2 dxdy
+\frac{1}{2}\frac{d}{dt}\int_\Omega \langle y \rangle^{2k+2l} |D^\alpha h|^2 dxdy
+\frac{c_v}{2}\frac{d}{dt}\int_\Omega \langle y \rangle^{2k+2l} |D^\alpha \theta|^2 dxdy\\
&=\int_\Omega(D^\alpha r_1 \cdot\langle y \rangle^{2k+2l}D^\alpha u
+D^\alpha r_2 \cdot\langle y \rangle^{2k+2l}D^\alpha \theta
+D^\alpha r_3 \cdot\langle y \rangle^{2k+2l}D^\alpha h)dxdy\\
&\quad -\int_\Omega(D^\alpha I_1 \cdot\langle y \rangle^{2k+2l}D^\alpha u
+D^\alpha I_2 \cdot\langle y \rangle^{2k+2l}D^\alpha \theta
+D^\alpha I_3 \cdot\langle y \rangle^{2k+2l}D^\alpha h)dxdy\\
&\quad +\mu\int_\Omega \partial_y D^\alpha[(\theta+\Theta \phi'+1)\partial_y u] \cdot\langle y \rangle^{2k+2l}D^\alpha udxdy
+\kappa\int_\Omega \partial_y^2 D^\alpha \theta \cdot\langle y \rangle^{2k+2l}D^\alpha \theta dxdy\\
&\quad +\nu \int_\Omega \partial_y^2 D^\alpha h \cdot\langle y \rangle^{2k+2l}D^\alpha h dxdy.\\
\end{aligned}
\end{equation}
First of all, the application of Cauchy inequality implies immediately
\begin{equation}\label{335}
\begin{aligned}
&\int_\Omega(D^\alpha r_1 \cdot\langle y \rangle^{2k+2l}D^\alpha u
+D^\alpha r_2 \cdot\langle y \rangle^{2k+2l}D^\alpha \theta
+D^\alpha r_3 \cdot\langle y \rangle^{2k+2l}D^\alpha h)dxdy\\
&\le \frac{1}{2}\|D^\alpha(u, \theta, h)\|_{L^2_{l+k}(\Omega)}^2
+\frac{1}{2}\|D^\alpha(r_1, r_2, r_3)\|_{L^2_{l+k}(\Omega)}^2.
\end{aligned}
\end{equation}
Next, we assume the following two estimates hold, which will be proved later:
for any small $0<\delta_1<1$,
\begin{equation}\label{336}
\begin{aligned}
&\mu\int_\Omega \partial_y D^\alpha[(\theta+\Theta \phi'+1)\partial_y u] \cdot\langle y \rangle^{2k+2l}D^\alpha u~dxdy\\
&\quad+\kappa\int_\Omega \partial_y^2 D^\alpha \theta \cdot\langle y \rangle^{2k+2l}D^\alpha \theta ~dxdy
+\nu \int_\Omega \partial_y^2 D^\alpha h \cdot\langle y \rangle^{2k+2l}D^\alpha h ~dxdy\\
&\le -\frac{\mu}{2}\|D^\alpha \partial_y u\|_{L^2_{k+l}(\Omega)}^2
-\frac{\kappa}{2}\|D^\alpha \partial_y \theta\|_{L^2_{k+l}(\Omega)}^2
-\frac{\nu}{2}\|D^\alpha \partial_y h\|_{L^2_{k+l}(\Omega)}^2
+\delta_1 \mu\|\partial_y u\|_{\mathcal{H}^m_0}^2\\
&\quad
+\delta_1 \kappa\|\partial_y \theta\|_{\mathcal{H}^m_0}^2
+\delta_1 \nu\|\partial_y h\|_{\mathcal{H}^m_0}^2
+C\delta_1^{-1}(\|(u, \theta, h)\|_{\mathcal{H}^{m}_0}^6+1)
+\sum_{|\beta|\le m+1}\| \partial_\tau^{{\beta}}(\Theta, P)\|_{L^2(\mathbb{T}_x)}^4,
\end{aligned}
\end{equation}
and
\begin{equation}\label{337}
\begin{aligned}
&-\int_\Omega(D^\alpha I_1 \cdot\langle y \rangle^{2k+2l}D^\alpha u
+D^\alpha I_2 \cdot\langle y \rangle^{2k+2l}D^\alpha \theta
+D^\alpha I_3 \cdot\langle y \rangle^{2k+2l}D^\alpha h)dxdy\\
&\le \frac{\mu}{4}\|D^\alpha \partial_y u\|_{L^2_{l+k}(\Omega)}
+\frac{\nu}{4}\|D^\alpha \partial_y h\|_{L^2_{l+k}(\Omega)}
+\frac{\kappa}{4}\|D^\alpha \partial_y \theta\|_{L^2_{k+l}(\Omega)}^2
+\delta_1 \mu\|\partial_y u\|_{\mathcal{H}^m_0}^2\\
&\quad +\delta_1 \nu\|\partial_y h\|_{\mathcal{H}^m_0}^2
+\delta_1^{-1}(\|(u, \theta, h)\|_{\mathcal{H}^m_l}^8+1)
+\sum_{|\beta| \le m+2}\|\partial_\tau^\beta(U, \Theta, H)\|_{L^2(\mathbb{T}_x)}^8.
\end{aligned}
\end{equation}
By plugging the estimates \eqref{335}-\eqref{337} into \eqref{334}, one obtains
\begin{equation}\label{338}
\begin{aligned}
&\frac{d}{dt}\|D^\alpha(u, \sqrt{c_v}\theta, h)(t)\|_{L^2_{k+l}(\Omega)}^2
+\frac{1}{4}\min\{\mu, \kappa, \nu\}
\|D^\alpha \partial_y (u, \theta, h)(t)\|_{L^2_{k+l}(\Omega)}^2\\
&\le \delta_1\max\{\mu, \nu, \kappa\}\|(\partial_y u, \partial_y \theta, \partial_y h)(t)\|_{\mathcal{H}_0^m}^2
+C\delta_1^{-1}(\|(u, \theta, h)(t)\|_{\mathcal{H}^m_l}^8+1)\\
&\quad
+\sum_{\tiny\substack{|\alpha|\le m \\ |\beta|\le m-1}}
\|D^\alpha(r_1, r_2, r_3)(t)\|_{L^2_{l+k}(\Omega)}^2
+\sum_{|\beta|\le m+2}\|\partial_\tau^\beta(U, \Theta, H, P)(t)\|_{L^2(\mathbb{T}_x)}^8.
\end{aligned}
\end{equation}
which implies the estimate \eqref{331} immediately.
Therefore, we complete the proof of Lemma \ref{lemma3.3}.
\end{proof}

\begin{proof}[\textbf{Proof of \eqref{336}.}]
In this part, we will first handle the term $\mu\int_\Omega \partial_y D^\alpha[(\theta+\Theta \phi'+1)\partial_y u] \cdot\langle y \rangle^{2k+2l}D^\alpha udxdy$.
By integration by parts, we have
\begin{equation}\label{c11}
\begin{aligned}
&\mu\int_\Omega \partial_y D^\alpha[(\theta+\Theta \phi'+1)\partial_y u] \cdot\langle y \rangle^{2k+2l}D^\alpha udxdy\\
&=-\mu \int_{\mathbb{T}_x}  D^\alpha[(\theta+\Theta \phi'+1)\partial_y u] \cdot
D^\alpha u|_{y=0}dx\\
&\quad-2(k+l)\mu\int_\Omega  D^\alpha[(\theta+\Theta \phi'+1)\partial_y u] \cdot\langle y \rangle^{2k+2l-1}D^\alpha udxdy\\
&\quad -\mu\int_\Omega  D^\alpha[(\theta+\Theta \phi'+1)\partial_y u] \cdot\langle y \rangle^{2k+2l}\partial_y D^\alpha udxdy\\
&=-\mu\int_\Omega  (\theta+\Theta \phi'+1)
\langle y \rangle^{2k+2l}|\partial_y D^\alpha u|^2dxdy\\
&\quad -\mu \int_\Omega [D^\alpha, (\theta+\Theta \phi'+1)]\partial_y u
\cdot \langle y\rangle^{2k+2l}D^\alpha \partial_y u dxdy\\
&\quad-2(k+l)\mu\int_\Omega  D^\alpha[(\theta+\Theta \phi'+1)\partial_y u] \cdot\langle y \rangle^{2k+2l-1}D^\alpha udxdy\\
&\quad-\mu\int_{\mathbb{T}_x} \left. D^\alpha[(\theta+1)\partial_y u]
\cdot D^\alpha u\right|_{y=0}dx.
\end{aligned}
\end{equation}

It is complicated to deal with four terms on the righthand side of \eqref{c11},
then we estimate them by the following four steps.\\
\textbf{\underline{Step 1:}}
In view of the lower boundedness of $\vartheta$ in \eqref{321}, it is easy to deduce that
$$
\theta+\Theta \phi'+1\ge 1,
$$
which, implies directly that
\begin{equation}\label{c12}
-\mu\int_\Omega  (\theta+\Theta \phi'+1)
\langle y \rangle^{2k+2l}|\partial_y D^\alpha u|^2dxdy
\le -\mu \int_\Omega  \langle y \rangle^{2k+2l}|\partial_y D^\alpha u|^2dxdy.
\end{equation}
\textbf{\underline{Step 2:}}
By virtue of the H\"{o}lder inequality, one deduces that
\begin{equation}\label{c13}
\begin{aligned}
&\left|\mu \int_\Omega [D^\alpha, (\theta+\Theta \phi'+1)]\partial_y u
\cdot \langle y\rangle^{2k+2l}D^\alpha \partial_y u dxdy\right|\\
&\le \mu \sum_{0<\widetilde{\alpha}\le \alpha}
\|D^{\widetilde{\alpha}} (\theta+\Theta \phi')\cdot D^{\alpha-\widetilde{\alpha}}\partial_y u\|_{L^2_{k+l}(\Omega)}\|D^\alpha \partial_y u\|_{L^2_{k+l}(\Omega)}.
\end{aligned}
\end{equation}
For any $m \ge 4$, we apply the inequality \eqref{e3} and
Cauchy inequality to get
\begin{equation}\label{c14}
\begin{aligned}
&\|D^{\widetilde{\alpha}} \theta \cdot D^{\alpha-\widetilde{\alpha}}\partial_y u\|_{L^2_{k+l}(\Omega)}\|D^\alpha \partial_y u\|_{L^2_{k+l}(\Omega)}\\
&\le \|D^{\widetilde{\alpha}-E_i} D^{E_i}\theta \cdot D^{\alpha-\widetilde{\alpha}}\partial_y u\|_{L^2_{k+l}(\Omega)}\|D^\alpha \partial_y u\|_{L^2_{k+l}(\Omega)}\\
&\le \|D^{E_i} \theta\|_{\mathcal{H}_{0}^{m-1}}\|\partial_y u\|_{\mathcal{H}_{l+1}^{m-1}}
\|D^\alpha \partial_y u\|_{L^2_{k+l}(\Omega)}\\
&\le \frac{1}{12}\|D^\alpha \partial_y u\|_{L^2_{k+l}(\Omega)}^2
+C\| \theta\|_{\mathcal{H}_{0}^{m}}^2\|u\|_{\mathcal{H}_{l}^{m}}^2,
\end{aligned}
\end{equation}
and
\begin{equation}\label{c15}
\begin{aligned}
&\|D^{\widetilde{\alpha}} (\Theta \phi')\cdot D^{\alpha-\widetilde{\alpha}}\partial_y u\|_{L^2_{k+l}(\Omega)}\|D^\alpha \partial_y u\|_{L^2_{k+l}(\Omega)}\\
&\le\|\partial_\tau^{\widetilde{\beta}}\Theta\|_{L^\infty(\mathbb{T}_x)}
\| D^{\alpha-\widetilde{\alpha}}\partial_y u\|_{L^2_{k+l}(\Omega)}\|D^\alpha \partial_y u\|_{L^2_{k+l}(\Omega)}\\
&\le \frac{1}{12} \|D^\alpha \partial_y u\|_{L^2_{k+l}(\Omega)}^2
+C\| \partial_\tau^{\widetilde{\beta}}\Theta\|_{H^1(\mathbb{T}_x)}^2
\|u\|_{\mathcal{H}_{l}^{m}}^2.
\end{aligned}
\end{equation}
Substituting \eqref{c14} and \eqref{c15} into \eqref{c13}, one arrives at for $m \ge 4$ that
\begin{equation}\label{c16}
\begin{aligned}
&\left|-\mu \int_\Omega [D^\alpha, (\theta+\Theta \phi'+1)]\partial_y u
\cdot \langle y\rangle^{2k+2l}D^\alpha \partial_y u dxdy\right|\\
&\le  \frac{\mu}{6} \|D^\alpha \partial_y u(t)\|_{L^2_{k+l}(\Omega)}^2
+C \| (u, \theta)(t)\|_{\mathcal{H}_{l}^{m}}^4
+C \sum_{|\beta|\le m+1}\| \partial_\tau^{{\beta}}\Theta(t)\|_{L^2(\mathbb{T}_x)}^4.
\end{aligned}
\end{equation}
\textbf{\underline{Step 3:}}
By virtue of the H\"{o}lder inequality,  it is easy to check that
\begin{equation}\label{c17}
\begin{aligned}
&\left|-2(k+l)\mu\int_\Omega  D^\alpha[(\theta+\Theta \phi'+1)\partial_y u] \cdot\langle y \rangle^{2k+2l-1}D^\alpha udxdy\right|\\
&\le 2(k+l)\mu \|\partial_y D^\alpha u\|_{L^2_{k+l}(\Omega)}
\|(\theta+\Theta \phi'+1)D^\alpha u\|_{L^2_{k+l-1}(\Omega)}\\
&\quad +2(k+l)\mu\sum_{0<\widetilde{\alpha}\le \alpha}
\|D^{\widetilde{\alpha}}(\theta+\Theta \phi')\cdot D^{\alpha-\widetilde{\alpha}}u_y\|_{L^2_{k+l-1}(\Omega)}
\|D^\alpha u\|_{L^2_{k+l}(\Omega)}.
\end{aligned}
\end{equation}
One applies the Sobolev and Cauchy inequalities to obtain
\begin{equation}\label{c18}
\begin{aligned}
&2(k+l)\mu\|\partial_y D^\alpha u\|_{L^2_{k+l}(\Omega)}
\|(\theta+\Theta \phi'+1)D^\alpha u\|_{L^2_{k+l-1}(\Omega)}\\
&\le 2(k+l)\mu\|\partial_y D^\alpha u\|_{L^2_{k+l}(\Omega)}
(\|\theta\|_{L^\infty(\Omega)}+
\|\Theta\|_{L^\infty(\mathbb{T}_x)}+1)
\|D^\alpha u\|_{L^2_{k+l-1}(\Omega)}\\
&\le \frac{\mu}{12}\|\partial_y D^\alpha u\|_{L^2_{k+l}(\Omega)}^2
+C(\|\theta\|_{H^2(\Omega)}+\|\Theta\|_{H^1(\mathbb{T}_x)}+1)^2
\|D^\alpha u\|_{L^2_{k+l-1}(\Omega)}^2.
\end{aligned}
\end{equation}
By virtue of the inequality \eqref{e3} and Sobolev inequality, it is easy to deduce that
\begin{equation}\label{c19}
\begin{aligned}
&\|D^{\widetilde{\alpha}} \theta
\cdot D^{\alpha-\widetilde{\alpha}}\partial_y u\|_{L^2_{k+l-1}(\Omega)}
\|D^\alpha u\|_{L^2_{k+l}(\Omega)}\\
&=\|D^{\widetilde{\alpha}-E_i} D^{E_i}\theta
\cdot D^{\alpha-\widetilde{\alpha}}\partial_y u\|_{L^2_{k+l-1}(\Omega)}
\|D^\alpha u\|_{L^2_{k+l}(\Omega)}\\
&\le \|\theta\|_{\mathcal{H}^{m}_0}\|\partial_y u\|_{\mathcal{H}^{m-1}_l}
\|D^\alpha u\|_{L^2_{k+l}(\Omega)}\\
&\le C\|\theta\|_{\mathcal{H}^{m}_0}^2+C\|u\|_{\mathcal{H}^{m}_l}^4,
\end{aligned}
\end{equation}
and
\begin{equation}\label{c110}
\begin{aligned}
&\|D^{\widetilde{\alpha}}(\Theta \phi')
\cdot D^{\alpha-\widetilde{\alpha}}\partial_y u\|_{L^2_{k+l-1}(\Omega)}
\|D^\alpha u\|_{L^2_{k+l}(\Omega)}\\
&\le\|\partial_\tau^{\widetilde{\beta}}\Theta\|_{L^\infty(\mathbb{T}_x)}
\| D^{\alpha-\widetilde{\alpha}}\partial_y u\|_{L^2_{k+l-1}(\Omega)}
\|D^\alpha  u\|_{L^2_{k+l}(\Omega)}\\
&\le C\| \partial_\tau^{\widetilde{\beta}}\Theta\|_{H^1(\mathbb{T}_x)}^2
+C\|u\|_{\mathcal{H}_{l}^{m}}^4,
\end{aligned}
\end{equation}
provided $m\ge 4$.
Then, substituting \eqref{c18}-\eqref{c110} into \eqref{c17}, we obtain directly
\begin{equation}\label{c111}
\begin{aligned}
&\left|-2(k+l)\mu\int_\Omega  D^\alpha[(\theta+\Theta \phi'+1)\partial_y u] \cdot\langle y \rangle^{2k+2l-1}D^\alpha udxdy\right|\\
&\le \frac{\mu}{12}\|\partial_y D^\alpha u(t)\|_{L^2_{k+l}(\Omega)}^2
+C(\|(u, \theta)(t)\|_{\mathcal{H}^{m}_l}^4+1)
+C\sum_{|\beta|\le m+1}\| \partial_\tau^{{\beta}}\Theta(t)\|_{L^2(\mathbb{T}_x)}^4.
\end{aligned}
\end{equation}
\textbf{\underline{Step 4:}} Finally, we deal with the term
$
-\mu\int_{\mathbb{T}_x} \left. D^\alpha[(\theta+1)\partial_y u] \cdot
D^\alpha u\right|_{y=0}dx
$
on the righthand side of \eqref{c11}.
\textbf{{Case 1: $|\alpha|\le m-1$.}}~Indeed, we can apply the simple trace estimate
to get that
\begin{equation}\label{c112}
\begin{aligned}
&\left|-\mu\int_{\mathbb{T}_x} \left. D^\alpha[(\theta+1)\partial_y u] \cdot
D^\alpha u\right|_{y=0}dx\right|\\
&\le
\mu\|D^\alpha[(\theta+1)\partial_y u]\|_{L^2(\Omega)}\|\partial_y D^\alpha u\|_{L^2(\Omega)}
+\mu\|\partial_y D^\alpha[(\theta+1)\partial_y u]\|_{L^2(\Omega)}
\|D^\alpha u\|_{L^2(\Omega)}.
\end{aligned}
\end{equation}
By virtue of \eqref{e3} and Cauchy inequality, one arrives at for $m \ge 4$ that
\begin{equation}\label{c113}
\|D^\alpha[(\theta+1)\partial_y u]\|_{L^2(\Omega)}\|\partial_y D^\alpha u\|_{L^2(\Omega)}
\le C\|\theta\|_{\mathcal{H}^{m-1}_{0}}\|\partial_y u\|_{\mathcal{H}^{m-1}_{0}}
+\|\partial_y D^\alpha u\|_{L^2(\Omega)}^2
\le C\|(u, \theta)\|_{\mathcal{H}^{m}_{0}}^2,
\end{equation}
and
\begin{equation}\label{c114}
\begin{aligned}
&\|\partial_y D^\alpha[(\theta+1)\partial_y u]\|_{L^2(\Omega)}
\|D^\alpha u\|_{L^2(\Omega)}\\
&\le \|D^\alpha(\partial_y \theta \partial_y u)\|_{L^2(\Omega)}
\|D^\alpha u\|_{L^2(\Omega)}
+\|D^\alpha[(\theta+1)\partial^2_y u]\|_{L^2(\Omega)}
\|D^\alpha u\|_{L^2(\Omega)}\\
&\le C\|\partial_y \theta\|_{\mathcal{H}^{m-1}_{0}}\|\partial_y u\|_{\mathcal{H}^{m-1}_{0}}
\|D^\alpha u\|_{L^2(\Omega)}
+ C\|\theta\|_{\mathcal{H}^{m-1}_{0}}\|\partial^2_y u\|_{\mathcal{H}^{m-1}_{0}}
\|D^\alpha u\|_{L^2(\Omega)}\\
&\quad
+\|D^\alpha \partial_y^2 u\|_{L^2(\Omega)}\|D^\alpha u\|_{L^2(\Omega)}\\
&\le \delta_1 \|\partial_y u\|_{\mathcal{H}^{m}_{0}}^2
+C\delta_1^{-1}(\|(u, \theta)\|_{\mathcal{H}^{m}_{0}}^4+1).
\end{aligned}
\end{equation}
Substituting \eqref{c113} and \eqref{c114} into \eqref{c112},
we find
\begin{equation}\label{c115}
\left|-\mu \int_{\mathbb{T}_x} \left. D^\alpha[(\theta+1)\partial_y u] \cdot
D^\alpha u\right|_{y=0}dx\right|
\le \delta_1 \mu \|\partial_y u(t)\|_{\mathcal{H}^{m}_{0}}^2
+C\delta_1^{-1}(\|(u, \theta)(t)\|_{\mathcal{H}^{m}_{0}}^4+1).
\end{equation}
\textbf{{Case 2: $|\alpha|=m$.}}
In view of $|\alpha|=|\beta|+k=m, ~|\beta|\le m-1$, we deduce that $k \ge 1$.
Let $\gamma \triangleq \alpha-E_3=(\beta, k-1)$, then $|\gamma|\le m-1$.
Then, it is easy to deduce that
\begin{equation}\label{c116}
\begin{aligned}
&-\mu\int_{\mathbb{T}_x} \left. D^\alpha[(\theta+1)\partial_y u] \cdot
D^\alpha u\right|_{y=0}dx\\
&=-\mu\int_{\mathbb{T}_x} \left. D^\gamma[\partial_y \theta \partial_y u+
(\theta+1)\partial^2_y u] \cdot
D^\alpha u\right|_{y=0}dx\\
&=-\mu\int_{\mathbb{T}_x} \left. D^\gamma(\partial_y \theta \partial_y u)
\cdot D^\alpha u\right|_{y=0}dx
-\mu\int_{\mathbb{T}_x} \left. (\theta+1)D^\gamma \partial^2_y u \cdot
D^\alpha u\right|_{y=0}dx\\
&\quad ~~-\mu\sum_{0<\widetilde{\gamma}\le \gamma}
\int_{\mathbb{T}_x} \left. \left(\begin{array}{c}\gamma \\ \widetilde{\gamma}
\end{array}\right)D^{\widetilde{\gamma}}\theta~ D^{\widetilde{\gamma}-\gamma}\partial^2_y u
\cdot D^\alpha u\right|_{y=0}dx\\
&\triangleq G_1+G_2+G_3.
\end{aligned}
\end{equation}
\textbf{\underline{Estimate for $G_1$:}}
We apply the inequality \eqref{e3} and Cauchy inequality to deduce for $m \ge 4$
\begin{equation}\label{c117}
\begin{aligned}
|G_1|
&\le \mu\|\partial_y D^\gamma(\partial_y \theta ~\partial_y u)\|_{L^2(\Omega)}
\|D^\alpha u\|_{L^2(\Omega)}
+\mu\|D^\gamma (\partial_y \theta~ \partial_y u)\|_{L^2(\Omega)}
\|\partial_y D^\alpha u\|_{L^2(\Omega)}\\
&\le \mu(\|\partial_y^2 \theta\|_{\mathcal{H}^{m-1}_{0}}\|\partial_y u\|_{\mathcal{H}^{m-1}_{0}}
      +\|\partial_y \theta\|_{\mathcal{H}^{m-1}_{0}}\|\partial_y^2 u\|_{\mathcal{H}^{m-1}_{0}})
      \|D^\alpha u\|_{L^2(\Omega)}\\
&\quad +\mu\|\partial_y \theta\|_{\mathcal{H}^{m-1}_{0}}
       \|\partial_y u\|_{\mathcal{H}^{m-1}_{0}}
       \|\partial_y D^\alpha u\|_{L^2(\Omega)}\\
&\le \delta_1 \mu\|\partial_y u(t)\|_{\mathcal{H}^{m}_{0}}^2
     +\delta_1 \kappa\|\partial_y \theta(t)\|_{\mathcal{H}^{m}_{0}}^2
     +C\delta_1^{-1}\|(u, \theta)(t)\|_{\mathcal{H}^{m}_{0}}^4.
\end{aligned}
\end{equation}
\textbf{\underline{Estimate for $G_3$:}}
With the help of \eqref{e3} and Cauchy inequality, it is easy to check that for $m \ge 5$
\begin{equation*}
\begin{aligned}
&\left|\mu\int_{\mathbb{T}_x} \left.
D^{\widetilde{\gamma}}\theta~ D^{\widetilde{\gamma}-\gamma}\partial^2_y u
\cdot D^\alpha u\right|_{y=0}dx\right|\\
&\le C\mu\|\partial_y (D^{\widetilde{\gamma}} \theta~
        D^{\gamma-\widetilde{\gamma}}\partial_y^2 u)
        \|_{L^2(\Omega)} \|D^\alpha u\|_{L^2(\Omega)}
     +C\mu\|D^{\widetilde{\gamma}}\theta ~D^{\gamma-\widetilde{\gamma}}\partial_y^2 u
     \|_{L^2(\Omega)} \|\partial_y D^\alpha u\|_{L^2(\Omega)}\\
&\le C\mu(\|D^{\widetilde{\gamma}-E_i}D^{E_i}\partial_y \theta~
      D^{\gamma-\widetilde{\gamma}}\partial^2_y u\|_{L^2(\Omega)}
      +\|D^{\widetilde{\gamma}-E_i}D^{E_i}\theta~
      D^{\gamma-\widetilde{\gamma}}\partial^3_y u\|_{L^2(\Omega)})
     \|D^\alpha u\|_{L^2(\Omega)}\\
&\quad +C\mu\|D^{\widetilde{\gamma}-E_i}D^{E_i}\theta~
        D^{\gamma-\widetilde{\gamma}}\partial_y^2 u\|_{L^2(\Omega)}
        \|\partial_y D^\alpha u\|_{L^2(\Omega)}\\
&\le C\mu(\|D^{E_i}\partial_y \theta\|_{\mathcal{H}^{m-2}_{0}}
      \|\partial_y^2 u\|_{\mathcal{H}^{m-2}_{0}}
      +\|D^{E_i}\theta\|_{\mathcal{H}^{m-2}_{0}}
      \|\partial_y^3 u\|_{\mathcal{H}^{m-2}_{0}})
      \|D^\alpha u\|_{L^2(\Omega)}\\
&\quad +C\mu\|D^{E_i} \theta\|_{\mathcal{H}^{m-2}_{0}}
        \|\partial_y^2 u\|_{\mathcal{H}^{m-2}_{0}}
       \|\partial_y D^\alpha u\|_{L^2(\Omega)}\\
&\le \delta_1 \mu \|\partial_y u(t)\|_{\mathcal{H}^{m}_{0}}^2
     +C\delta_1^{-1}(\|(u, \theta)(t)\|_{\mathcal{H}^{m}_{0}}^4+1),
\end{aligned}
\end{equation*}
which, implies that
\begin{equation}\label{c118}
|G_3|\le \delta_1 \mu\|\partial_y u(t)\|_{\mathcal{H}^{m}_{0}}^2
     +C\delta_1^{-1}(\|(u, \theta)(t)\|_{\mathcal{H}^{m}_{0}}^4+1).
\end{equation}
\textbf{\underline{Estimate for $G_2$:}}
Indeet, by virtue of the equation \eqref{eq4}$_1$, it is easy to deduce that
\begin{equation*}
\begin{aligned}
(\theta+\Theta \phi'+1)D^\gamma \partial_y^2 u
&=D^\gamma \partial_t u
+D^\gamma I_1
-\sum_{\widetilde{\gamma}\le \gamma}
\left(\begin{array}{c} \gamma \\ \widetilde{\gamma} \end{array}\right)
D^{\widetilde{\gamma}} (\partial_y\theta+\Theta \phi'') ~
D^{\gamma-\widetilde{\gamma}}\partial_y u \\
&\quad -\sum_{0<\widetilde{\gamma}\le \gamma}
\left(\begin{array}{c} \gamma \\ \widetilde{\gamma} \end{array}\right)
D^{\widetilde{\gamma}}(\theta+\Theta \phi') ~
D^{\gamma-\widetilde{\gamma}}\partial^2_y u -D^\gamma r_1,
\end{aligned}
\end{equation*}
which yields directly
\begin{equation}\label{c119}
\begin{aligned}
G_2
&=\mu\int_{\mathbb{T}_x} \left. D^\gamma \partial_t u~D^\alpha u \right|_{y=0}dx
+\mu\int_{\mathbb{T}_x} \left. D^\gamma I_1 ~D^\alpha u\right|_{y=0}dx\\
&\quad-\mu\sum_{\widetilde{\gamma}\le \gamma}
\left(\begin{array}{c} \gamma \\ \widetilde{\gamma} \end{array}\right)
\int_{\mathbb{T}_x} \left. D^{\widetilde{\gamma}} \partial_y\theta ~
D^{\gamma-\widetilde{\gamma}}\partial_y u ~D^\alpha u\right|_{y=0}dx\\
&\quad -\mu\sum_{0<\widetilde{\gamma}\le \gamma}
\left(\begin{array}{c} \gamma \\ \widetilde{\gamma} \end{array}\right)
\int_{\mathbb{T}_x} \left. D^{\widetilde{\gamma}}\theta ~
D^{\gamma-\widetilde{\gamma}}\partial^2_y u ~D^\alpha u\right|_{y=0} dx\\
&\quad-\mu\int_{\mathbb{T}_x} \left. D^\gamma r_1 ~D^\alpha u\right|_{y=0}dx\\
&\triangleq G_{21}+G_{22}+G_{23}+G_{24}+G_{25}.
\end{aligned}
\end{equation}
In view of the Cauchy inequality, one arrives at
\begin{equation}\label{c120}
\begin{aligned}
|G_{21}|
&\le \mu\|\partial_y D^{\gamma+e_1}u\|_{L^2(\Omega)}\|D^\alpha u\|_{L^2(\Omega)}
     +\mu\|D^{\gamma+e_1}u\|_{L^2(\Omega)}\|\partial_y D^\alpha u\|_{L^2(\Omega)}\\
&\le \frac{\mu}{12} \|\partial_y D^\alpha u(t)\|_{L^2(\Omega)}^2
     +C\|u(t)\|_{\mathcal{H}^m_0}^2.
\end{aligned}
\end{equation}
By virtue of the definition of $I_1$, it is easy to deduce that
\begin{equation}\label{c121}
\begin{aligned}
G_{22}
&=
\mu\int_{\mathbb{T}_x}D^{\gamma}(uu_x)D^\alpha u|_{y=0} dx
+\mu\int_{\mathbb{T}_x}D^{\gamma}(vu_y)D^\alpha u|_{y=0} dx\\
&\quad-\mu\int_{\mathbb{T}_x}D^{\gamma}(h h_x)D^\alpha u|_{y=0} dx
-\mu\int_{\mathbb{T}_x}D^{\gamma}(g h_y)D^\alpha u|_{y=0} dx\\
&\triangleq G_{221}+G_{222}+G_{223}+G_{224}.
\end{aligned}
\end{equation}
One applies the inequality \eqref{e3} and Cauchy inequality to obtain
\begin{equation}\label{c122}
\begin{aligned}
|G_{221}|
&\le \mu\|\partial_y D^\gamma(u u_x)\|_{L^2(\Omega)}
     \|D^\alpha u\|_{L^2(\Omega)}
     +\mu\|D^\gamma(u u_x)\|_{L^2(\Omega)}\|\partial_y D^\alpha u\|_{L^2(\Omega)}\\
&\le \mu(\|\partial_y u\|_{\mathcal{H}^{m-1}_0}\|\partial_x u\|_{\mathcal{H}^{m-1}_0}
     +\|u\|_{\mathcal{H}^{m-1}_0}\|\partial_y \partial_x u\|_{\mathcal{H}^{m-1}_0})
     \|D^\alpha u\|_{L^2(\Omega)}\\
&\quad +\mu(\|u\|_{\mathcal{H}^{m-1}_0}\|\partial_x u\|_{\mathcal{H}^{m-1}_0}
     +\|u\|_{\mathcal{H}^{m-1}_0}\| \partial_x u\|_{\mathcal{H}^{m-1}_0})
     \|\partial_y D^\alpha u\|_{L^2(\Omega)}\\
&\le \delta_1 \mu \|\partial_y u\|_{\mathcal{H}^m_0}^2
+C\delta_1^{-1}(\|u\|_{\mathcal{H}^{m}_0}^4+1),
\end{aligned}
\end{equation}
provided $m \ge 3$.
Following the idea as Liu et al.\cite{Liu-Xie-Yang},
it is easy to obtain the following estimate
\begin{equation}\label{c123}
|G_{222}|\le  \delta_1 \mu\|\partial_y u\|_{\mathcal{H}^m_0}^2
+C\delta_1^{-1}(\|u\|_{\mathcal{H}^{m}_0}^4+1).
\end{equation}
Similarly, it is easy to deduce that
\begin{equation}\label{c124}
\begin{aligned}
&|G_{223}|\le  \delta_1 \mu \|\partial_y u\|_{\mathcal{H}^m_0}^2
+\delta_1 \nu \|\partial_y h\|_{\mathcal{H}^m_0}^2
+C\delta_1^{-1}(\|(u, h)\|_{\mathcal{H}^{m}_0}^4+1),\\
&|G_{224}|\le \delta_1 \mu \|\partial_y u\|_{\mathcal{H}^m_0}^2
+C\delta_1^{-1}(\|(u,h)\|_{\mathcal{H}^{m}_0}^4+1).
\end{aligned}
\end{equation}
Substituting the estimate \eqref{c122}-\eqref{c124} into \eqref{c121}, one obtains that
\begin{equation}\label{c125}
\begin{aligned}
|G_{22}|\le  \delta_1 \mu \|\partial_y u(t)\|_{\mathcal{H}^m_0}^2
+\delta_1 \nu \|\partial_y h(t)\|_{\mathcal{H}^m_0}^2
+C\delta_1^{-1}(\|(u, h)(t)\|_{\mathcal{H}^{m}_0}^4+1).
\end{aligned}
\end{equation}
In view of the trace inequality \eqref{e1} and Sobolev
inequality \eqref{e3}, we find for $m \ge 4$
\begin{equation*}
\begin{aligned}
&\mu\|\left.D^{\widetilde{\gamma}}\partial_y \theta~
D^{\gamma-\widetilde{\gamma}}\partial_y u~D^\alpha u\right|_{y=0}\|_{L^1{(\mathbb{T}_x)}}\\
&\le \mu\|\partial_y(D^{\widetilde{\gamma}}\partial_y \theta~
D^{\gamma-\widetilde{\gamma}}\partial_y u)\|_{L^2(\Omega)}\|D^\alpha u\|_{L^2(\Omega)}
+\mu\|D^{\widetilde{\gamma}}\partial_y \theta~
D^{\gamma-\widetilde{\gamma}}\partial_y u\|_{L^2(\Omega)}
\|\partial_y D^\alpha u\|_{L^2(\Omega)}\\
&\le \mu\|\partial_y^2 \theta\|_{\mathcal{H}^{m-1}_{0}}
       \|\partial_y u\|_{\mathcal{H}^{m-1}_{0}}\|D^\alpha u\|_{L^2(\Omega)}
      +\mu\|\partial_y \theta\|_{\mathcal{H}^{m-1}_{0}}
      \|\partial_y^2 u\|_{\mathcal{H}^{m-1}_{0}}\|D^\alpha u\|_{L^2(\Omega)}\\
&\quad +\mu\|\partial_y \theta\|_{\mathcal{H}^{m-1}_{0}}
   \|\partial_y u\|_{\mathcal{H}^{m-1}_{0}}
   \|\partial_y D^\alpha u\|_{L^2(\Omega)}\\
&\le \delta_1 \mu \|\partial_y u\|_{\mathcal{H}^m_0}^2
+\delta_1 \kappa\|\partial_y \theta\|_{\mathcal{H}^m_0}^2
+C\delta_1^{-1}(\|(u, \theta)\|_{\mathcal{H}_{0}^{m}}^4+1),
\end{aligned}
\end{equation*}
which, implies that
\begin{equation}\label{c126}
|G_{23}|\le \delta_1 \mu \|\partial_y u(t)\|_{\mathcal{H}^m_0}^2
+\delta_1 \kappa\|\partial_y \theta(t)\|_{\mathcal{H}^m_0}^2
+C\delta_1^{-1}(\|(u, \theta)(t)\|_{\mathcal{H}_{0}^{m}}^4+1).
\end{equation}
%It is easy to deduce that
%\begin{equation}
%\begin{aligned}
%&\|D^{\widetilde{\gamma}}\theta ~
%D^{\gamma-\widetilde{\gamma}}\partial^2_y u
%~D^\alpha u|_{y=0} \|_{L^1(\mathbb{T}_x)}\\
%&\le C(\|\partial_y D^{\widetilde{\gamma}}\theta \cdot
%D^{\gamma-\widetilde{\gamma}}\partial_y^2 u\|_{L^2(\Omega)}
%+\|D^{\widetilde{\gamma}}\theta\cdot D^{\gamma-\widetilde{\gamma}}
%\partial_y^3 u\|_{L^2(\Omega)})\|D^\alpha u\|_{L^2(\Omega)}\\
%&\quad+\|D^{\widetilde{\gamma}}\theta \cdot
%D^{\gamma-\widetilde{\gamma}}\partial_y^2 u\|_{L^2(\Omega)}
%\|\partial_y D^\alpha u\|_{L^2(\Omega)}\\
%&\le C(\|D^{E_i}\partial_y \theta\|_{\mathcal{H}^{m-2}_0}
%\|\partial_y^2 u\|_{\mathcal{H}^{m-2}_0}
%+\|D^{E_i} \theta\|_{\mathcal{H}^{m-2}_0}
%\|\partial_y^3 u\|_{\mathcal{H}^{m-2}_0})\|D^\alpha u\|_{L^2(\Omega)}\\
%&\quad +\|D^{E_i} \theta\|_{\mathcal{H}^{m-2}_0}
%\|\partial_y^2 u\|_{\mathcal{H}^{m-2}_0}\|\partial_y D^\alpha u\|_{L^2(\Omega)}\\
%&\le  \sigma\|\partial_y u\|_{\mathcal{H}^m_0}^2
%+C\sigma^{-1}(\|u\|_{\mathcal{H}_{0}^{m}}^4+
%\|\theta\|_{\mathcal{H}_{0}^{m}}^4+
%\|\theta\|_{\mathcal{H}_{0}^{m}}^2),
%\end{aligned}
%\end{equation}
Similarly, it is easy to deduce that
\begin{equation}\label{c127}
|G_{24}|\le \delta_1 \mu \|\partial_y u(t)\|_{\mathcal{H}^m_0}^2
+C\delta_1^{-1}(\|(u,\theta)(t)\|_{\mathcal{H}_{0}^{m}}^4+1).
\end{equation}
In view of the definition of $r_1$(see \eqref{function2})
and trace inequality \eqref{e1}, one attains directly
\begin{equation}\label{c128}
\begin{aligned}
|G_{25}|
&=\left|\mu\int_{\mathbb{T}_x} D^\gamma P_x\cdot D^\alpha u dx\right|\\
&\le \mu\|D^\gamma P_x\|_{L^2(\mathbb{T}_x)}\|D^\alpha u\|_{L^2(\mathbb{T}_x)}\\
&\le \sqrt{2}\mu\|D^\gamma P_x\|_{L^2(\mathbb{T}_x)}
     \|D^\alpha u\|_{L^2(\Omega)}^{\frac{1}{2}}
     \|D^\alpha \partial_y u\|_{L^2(\Omega)}^{\frac{1}{2}}\\
&\le \frac{\mu}{12} \|\partial_y D^\alpha u\|_{L^2(\Omega)}^2
+C(\|u\|_{\mathcal{H}^{m}_0}^2+\|D^\gamma P_x\|_{L^2(\mathbb{T}_x)}^2).
\end{aligned}
\end{equation}
Substituting \eqref{c120}, \eqref{c125}, \eqref{c126}, \eqref{c127}
and \eqref{c128} into  \eqref{c119}, we find
\begin{equation*}
\begin{aligned}
|G_{2}|\le
&\frac{\mu}{6}\|\partial_y D^\alpha u(t)\|_{L^2(\Omega)}^2
+\delta_1 \mu\|\partial_y u(t)\|_{\mathcal{H}^m_0}^2
+\delta_1 \kappa\|\partial_y \theta(t)\|_{\mathcal{H}^m_0}^2
+\delta_1 \nu\|\partial_y h(t)\|_{\mathcal{H}^m_0}^2\\
&+C\delta_1^{-1}(\|(u, \theta, h)(t)\|_{\mathcal{H}^{m}_0}^4+1)
+C\sum_{|\beta|\le m-1}\|\partial_\tau^{\beta} P_x(t)\|_{L^2(\mathbb{T}_x)}^2,
\end{aligned}
\end{equation*}
which, together with \eqref{c117} and \eqref{c118}, gives for $|\alpha|=m$ that
\begin{equation}\label{c129}
\begin{aligned}
&|\mu\int_{\mathbb{T}_x} \left. D^\alpha[(\theta+1)\partial_y u] \cdot
D^\alpha u\right|_{y=0}dxdy|\\
&\le \frac{\mu}{6}\|\partial_y D^\alpha u(t)\|_{L^2(\Omega)}^2
+\delta_1 \mu\|\partial_y u(t)\|_{\mathcal{H}^m_0}^2
+\delta_1 \kappa\|\partial_y \theta(t)\|_{\mathcal{H}^m_0}^2
+\delta_1 \nu\|\partial_y h(t)\|_{\mathcal{H}^m_0}^2\\
&\quad +C\delta_1^{-1}(\|(u, \theta, h)(t)\|_{\mathcal{H}^{m}_0}^4+1)
+C\sum_{|\beta|\le m-1}\|\partial_\tau^{\beta} P_x(t)\|_{L^2(\mathbb{T}_x)}^2.
\end{aligned}
\end{equation}
The combination of \eqref{c115} and \eqref{c129} yields for $|\alpha|\le m$ that
\begin{equation}\label{c130}
\begin{aligned}
&|\mu\int_{\mathbb{T}_x} \left. D^\alpha[(\theta+1)\partial_y u] \cdot
D^\alpha u\right|_{y=0}dx|\\
&\le \frac{\mu}{6}\|\partial_y D^\alpha u(t)\|_{L^2(\Omega)}^2
+\delta_1 \mu\|\partial_y u(t)\|_{\mathcal{H}^m_0}^2
+\delta_1 \kappa\|\partial_y \theta(t)\|_{\mathcal{H}^m_0}^2
+\delta_1 \nu\|\partial_y h(t)\|_{\mathcal{H}^m_0}^2\\
&\quad +C\delta_1^{-1}(\|(u, \theta, h)(t)\|_{\mathcal{H}^{m}_0}^4+1)
+C\sum_{|\beta|\le m-1}\|\partial_\tau^{\beta} P_x(t)\|_{L^2(\mathbb{T}_x)}^2.
\end{aligned}
\end{equation}
Substituting \eqref{c12}, \eqref{c16}, \eqref{c111} and \eqref{c130}
into \eqref{c11}, one finds that
\begin{equation}\label{c131}
\begin{aligned}
&\mu\int_\Omega \partial_y D^\alpha[(\theta+\Theta \phi'+1)\partial_y u] \cdot\langle y \rangle^{2k+2l}D^\alpha udxdy\\
&\le-\frac{\mu}{2}\|D^\alpha \partial_y u(t)\|_{L^2_{k+l}(\Omega)}
+\delta_1 \mu\|\partial_y u(t)\|_{\mathcal{H}^m_0}^2
+\delta_1 \kappa\|\partial_y \theta(t)\|_{\mathcal{H}^m_0}^2
+\delta_1 \nu\|\partial_y h(t)\|_{\mathcal{H}^m_0}^2\\
&\quad+C\delta_1^{-1}(\|(u, \theta, h)(t)\|_{\mathcal{H}^{m}_0}^4+1)
+\sum_{|\beta|\le m+1}\| \partial_\tau^{{\beta}}\Theta(t)\|_{L^2(\mathbb{T}_x)}^4
+\sum_{|\beta|\le m-1}\|\partial_\tau^{\beta} P_x(t)\|_{L^2(\mathbb{T}_x)}^2.
\end{aligned}
\end{equation}
Similarly(or following the idea as Liu et al.\cite{Liu-Xie-Yang}), one finds that
\begin{equation}\label{c132}
\begin{aligned}
&\nu \int_\Omega \partial_y^2 D^\alpha h \cdot\langle y \rangle^{2k+2l}D^\alpha h dxdy\\
&\le -\frac{\nu}{2}\|D^\alpha \partial_y h(t)\|_{L^2_{k+l}(\Omega)}^2
+\delta_1 \mu \|\partial_y u(t)\|_{\mathcal{H}^m_0}^2
+\delta_1 \nu \|\partial_y h(t)\|_{\mathcal{H}^m_0}^2\\
&\quad +C\delta_1^{-1}(\|(u, h)(t)\|_{\mathcal{H}^m_0}^4+1),
\end{aligned}
\end{equation}
and
\begin{equation}\label{c133}
\begin{aligned}
&\kappa\int_\Omega \partial_y^2 D^\alpha \theta \cdot\langle y \rangle^{2k+2l}D^\alpha \theta dxdy\\
&\le -\frac{\kappa}{2}\|D^\alpha \partial_y \theta(t)\|_{L^2_{k+l}(\Omega)}^2
+\delta_1 \mu \|\partial_y u(t)\|_{\mathcal{H}^m_0}^2
+\delta_1 \kappa \|\partial_y \theta(t)\|_{\mathcal{H}^m_0}^2\\
&\quad +\delta_1 \nu \|\partial_y h(t)\|_{\mathcal{H}^m_0}^2
+C\delta_1^{-1}(\|(u, \theta)(t)\|_{\mathcal{H}^m_0}^6+1).
\end{aligned}
\end{equation}
Therefore, the combination of \eqref{c131}-\eqref{c133} completes the proof of \eqref{336}.
\end{proof}

\begin{proof}[\textbf{Proof of \eqref{337}.}]
We will first handle the term
$
-\int_\Omega
D^\alpha I_2 \cdot\langle y \rangle^{2k+2l}D^\alpha \theta dxdy.
$
As we know that
$$
\begin{aligned}
I_2&\triangleq c_v[(u+U\phi')\partial_x \!+\!(v-U_x \phi)\partial_y ]\theta
+\!c_v \Theta_x \phi' u\!+\!c_v \Theta \phi'' v
\!-\!\mu \theta (u_y)^2\!-\!\mu (U\phi'')^2 \theta\\
&\quad \!-2\mu U \phi'' \theta u_y\!-\!\mu \Theta \phi'(u_y)^2
\!-\!2\mu \Theta U \phi' \phi'' u_y\!-\!2\mu U\phi'' u_y\!-\!\mu (u_y)^2
\!-\!\nu (h_y)^2\!-\!2\nu H \phi' h_y.
\end{aligned}
$$
Then it is easy to deduce that
\begin{equation}\label{c21}
D^\alpha I_2= I_{21}+I_{22}+I_{23},
\end{equation}
where
\begin{equation}\label{c22}
\begin{aligned}
&I_{21}\triangleq c_v[(u+U\phi')\partial_x +(v-U_x \phi)\partial_y ]D^\alpha\theta,\\
&I_{22}\triangleq c_v[D^\alpha,(u+U\phi')\partial_x +(v-U_x \phi)\partial_y ]\theta,\\
&I_{23}\triangleq D^\alpha(c_v \Theta_x \phi' u+c_v \Theta \phi'' v
-\mu \theta (u_y)^2-\mu (U\phi'')^2 \theta
-2\mu U \phi'' \theta u_y-\mu \Theta \phi'(u_y)^2)\\
&\quad \quad -D^\alpha(\!2\mu \Theta U \phi' \phi'' u_y+2\mu U\phi'' u_y
+\mu (u_y)^2+\nu (h_y)^2+2\nu H \phi' h_y).
\end{aligned}
\end{equation}
\textbf{\underline{Step 1:}}
Integrating by part and applying the divergence
free condition of velocity, we find for $m \ge 3$
\begin{equation}\label{c23}
\begin{aligned}
&-\int_\Omega I_{21} \cdot\langle y \rangle^{2k+2l}D^\alpha \theta dxdy\\
&=(2k+2l)\int_\Omega \langle y\rangle^{2k+2l-1}
(v-U_x \phi) |D^\alpha \theta|^2 dxdy\\
&\le C\|\langle y \rangle^{-1}{v}\|_{L^\infty(\Omega)}
\|D^\alpha \theta\|_{L^2_{k+l}(\Omega)}
+C\|\langle y \rangle^{-1}{U_x \phi}\|_{L^\infty(\Omega)}
\|D^\alpha \theta\|_{L^2_{k+l}(\Omega)}\\
&\le C\|u_x\|_{L^\infty(\Omega)}
\|D^\alpha \theta\|_{L^2_{k+l}(\Omega)}
+C\|{U_x }\|_{L^\infty(\mathbb{T}_x)}
\|D^\alpha \theta\|_{L^2_{k+l}(\Omega)}\\
&\le C\|u_x\|_{H^2(\Omega)}\|\theta\|_{\mathcal{H}^m_l}^2
+C\|U_x\|_{H^1(\mathbb{T}_x)}\|\theta\|_{\mathcal{H}^m_l}^2\\
&\le C(\|(u, \theta)\|_{\mathcal{H}^m_l}^4+1)
      +C\|U_x\|_{H^1(\mathbb{T}_x)}^2,
\end{aligned}
\end{equation}
where we have used the Hardy type inequality \eqref{e5}.\\
\textbf{\underline{Step 2:}}
It is easy to deduce that
\begin{equation}\label{c24}
\begin{aligned}
\|[D^\alpha,u \partial_x +v \partial_y ]\theta\|_{L^2_{k+l}(\Omega)}
\lesssim \sum_{0<\widetilde{\alpha}\le \alpha}
\|D^{\alpha}u\cdot D^{\alpha-\widetilde{\alpha}}\theta_x\|_{L^2_{k+l}(\Omega)}
+\sum_{0<\widetilde{\alpha}\le \alpha}
\|D^{\alpha}v \cdot D^{\alpha-\widetilde{\alpha}}\theta_y\|_{L^2_{k+l}(\Omega)}.
\end{aligned}
\end{equation}
\textbf{{Case 1: $\widetilde{k}=0$.}}
One can infer that
$D^{\widetilde{\alpha}}=\partial_\tau^{\widetilde{\beta}}$,
and $\widetilde{\beta}\ge e_i, i=1~{\rm or}~2$, $|k|\le m-1$.
Then, we find
\begin{equation}\label{c25}
\begin{aligned}
\|D^{\widetilde{\alpha}}u D^{\alpha-\widetilde{\alpha}}\theta_x\|_{L^2_{k+l}(\Omega)}
=\|\partial_\tau^{\widetilde{\beta}-e_i} \partial_\tau^{e_i}u\cdot
\partial_\tau^{\beta-\widetilde{\beta}}\partial_y^k \theta_x\|_{L^2_{k+l}(\Omega)}
\le \|\partial_\tau^{e_i} u\|_{\mathcal{H}^{m-1}_{0}}
      \|\theta_x\|_{\mathcal{H}^{m-1}_{l}}
\le \|u\|_{\mathcal{H}^{m}_{0}}\|\theta\|_{\mathcal{H}^{m}_{l}},
\end{aligned}
\end{equation}
provided that $m\ge4$.
On the other hand, it is easy to deduce that
\begin{equation}\label{c26}
\|D^{\widetilde{\alpha}}v~D^{\alpha-\widetilde{\alpha}}\theta_y\|_{L^2_{k+l}(\Omega)}
=\|\partial_\tau^{\widetilde{\beta}}\partial_y^{-1}\partial_x u
~\partial_\tau^{\beta-\widetilde{\beta}}\partial_y^k\theta_y\|_{L^2_{k+l}(\Omega)}.
\end{equation}
If $|\alpha|=|\beta|+k\le m-1$, one applies the inequality \eqref{e3} to obtain that
\begin{equation}\label{c27}
\|\partial_\tau^{\widetilde{\beta}}\partial_y^{-1}\partial_x u
~\partial_\tau^{\beta-\widetilde{\beta}}\partial_y^k\theta_y\|_{L^2_{k+l}(\Omega)}
\le \|u_x\|_{\mathcal{H}^{m-1}_0}\|\theta_y\|_{\mathcal{H}^{m-1}_{l+1}}
\le \|u\|_{\mathcal{H}^{m}_0}\|\theta\|_{\mathcal{H}^{m}_l},
\end{equation}
provided that $m\ge4$.
If $|\alpha|=|\beta|+k=m$, then it infers that $k \ge 1$ and one finds
\begin{equation}\label{c28}
\begin{aligned}
&\|\partial_\tau^{\widetilde{\beta}}\partial_y^{-1}\partial_x u
~\partial_\tau^{\beta-\widetilde{\beta}}\partial_y^k\theta_y\|_{L^2_{k+l}(\Omega)}
=\|\partial_\tau^{\widetilde{\beta}}\partial_y^{-1}\partial_x u
~\partial_\tau^{\beta-\widetilde{\beta}}\partial_y^{k-1}\theta_{yy}\|_{L^2_{k+l}(\Omega)}\\
&\le \|\partial_x \partial_\tau^{e_i} u\|_{\mathcal{H}^{m-2}_0}
     \|\partial_y^2 \theta\|_{\mathcal{H}^{m-2}_{l+2}}
\le \|u\|_{\mathcal{H}^{m}_0}\|\theta\|_{\mathcal{H}^{m}_l},
\end{aligned}
\end{equation}
provided that $m\ge5$.
The combination of \eqref{c25}-\eqref{c28} yields directly that
\begin{equation}\label{c29}
\|[D^\alpha,u \partial_x +v \partial_y ]\theta\|_{L^2_{k+l}(\Omega)}
\le C\|(u, \theta)(t)\|_{\mathcal{H}^{m}_{l}}^2.
\end{equation}
\textbf{{Case 2: $\widetilde{k}\ge 1$.}} Then, we get that $\widetilde{\alpha} \ge E_3$
and obtain
\begin{equation}\label{c210}
\begin{aligned}
\|D^{\widetilde{\alpha}}u~D^{\alpha-\widetilde{\alpha}}\theta_x\|_{L^2_{k+l}(\Omega)}
=\|D^{\widetilde{\alpha}-E_3}D^{E_3}u
~D^{\alpha-\widetilde{\alpha}}\theta_x\|_{L^2_{k+l}(\Omega)}
\le \|\partial_x \theta\|_{\mathcal{H}^{m}_{0}}
\|D^{E_3}u\|_{\mathcal{H}^{m-1}_{l+1}}
\le C\|(u, \theta)\|_{\mathcal{H}^{m}_{l}}^2.
\end{aligned}
\end{equation}
On the other hand, one applies the inequality \eqref{e3}
and divergence free of velocity to deduce for $m \ge 4$
\begin{equation*}
\|D^{\widetilde{\alpha}}v~D^{\alpha-\widetilde{\alpha}}\theta_y\|_{L^2_{k+l}(\Omega)}
=\|D^{\widetilde{\alpha}-E_3}u_x~D^{\alpha-\widetilde{\alpha}}\theta_y\|_{L^2_{k+l}(\Omega)}
\le \|\partial_x u\|_{\mathcal{H}^{m-1}_{0}}
\|\partial_y \theta\|_{\mathcal{H}^{m-1}_{l+1}}
\le C\|(u,\theta)\|_{\mathcal{H}^{m}_{l}}^2,
\end{equation*}
which, together with \eqref{c210}, yields that
\begin{equation}\label{c211}
\|[D^\alpha,u \partial_x +v \partial_y ]\theta\|_{L^2_{k+l}(\Omega)}
\le C\|(u,\theta)\|_{\mathcal{H}^{m}_{l}}^2.
\end{equation}
Substituting the combination of \eqref{c29} and \eqref{c211} into
\eqref{c24}, we find for all $\widetilde{k}\ge 0$
\begin{equation}\label{c212}
\|[D^\alpha,u \partial_x +v \partial_y ]\theta\|_{L^2_{k+l}(\Omega)}
\le C\|(u,\theta)\|_{\mathcal{H}^{m}_{l}}^2.
\end{equation}
On the other hand, it is easy to deduce that
\begin{equation}\label{c213}
\|[D^\alpha, U\phi'\partial_x -U_x \phi\partial_y]\theta\|_{L^2_{k+l}(\Omega)}
\lesssim \!\!\!\!\sum_{0<\widetilde{\alpha}\le \alpha}\!\!\!
\|D^{\widetilde{\alpha}}( U\phi')
D^{\alpha-\widetilde{\alpha}}\theta_x\|_{L^2_{k+l}(\Omega)}
+\!\!\!\sum_{0<\widetilde{\alpha}\le \alpha}\!\!\!
\|D^{\widetilde{\alpha}}(U_x \phi)
D^{\alpha-\widetilde{\alpha}}\theta_y\|_{L^2_{k+l}(\Omega)}.
\end{equation}
In view of the fact $|\alpha-\widetilde{\alpha}|\le m-1$,
then one arrives at
\begin{equation}\label{c214}
\|D^{\widetilde{\alpha}}( U\phi')
D^{\alpha-\widetilde{\alpha}}\theta_x\|_{L^2_{k+l}(\Omega)}
\le \|\langle y\rangle^{\widetilde{k}}D^{\widetilde{\alpha}}(U\phi')\|_{L^\infty(\Omega)}
    \|\langle y\rangle^{k+l-\widetilde{k}}D^{\alpha-\widetilde{\alpha}}\theta_x\|_{L^2(\Omega)}
\le \|\partial_\tau^{\widetilde{\beta}}U\|_{L^\infty(\mathbb{T}_x)}
    \|\theta\|_{\mathcal{H}^m_l},
\end{equation}
and
\begin{equation}\label{c215}
\|D^{\widetilde{\alpha}}(U_x \phi)
~D^{\alpha-\widetilde{\alpha}}\theta_y\|_{L^2_{k+l}(\Omega)}
\le \|\partial_\tau^{\widetilde{\beta}}U_x\|_{L^\infty(\mathbb{T}_x)}
    \|\theta\|_{\mathcal{H}^m_l}
\end{equation}
Substituting \eqref{c214} and \eqref{c215} into \eqref{c213},
we find directly
\begin{equation*}
\|[D^\alpha, U\phi'\partial_x -U_x \phi\partial_y]\theta\|_{L^2_{k+l}(\Omega)}
\le C\|\theta\|_{\mathcal{H}^m_l}^2
+\sum_{|\beta|\le m+2}\|\partial_\tau^\beta U\|_{L^2(\mathbb{T}_x)}^2,
\end{equation*}
which, together with \eqref{c212}, yields directly
\begin{equation}\label{c216}
-\int_\Omega I_{22} \cdot\langle y \rangle^{2k+2l}D^\alpha \theta dxdy
\le C(\|(u, \theta)(t)\|_{\mathcal{H}^m_l}^4+1)
+\sum_{|\beta|\le m+2}\|\partial_\tau^\beta U(t)\|_{L^2(\mathbb{T}_x)}^4.
\end{equation}
\textbf{\underline{Step 3:}} Finally, we will give the estimate for $\|I_{23}\|_{L^2_{k+l}(\Omega)}$. Recall the definition
of $I_{23}$(see \eqref{c22})
\begin{equation*}
\begin{aligned}
&I_{23}\triangleq D^\alpha(c_v \Theta_x \phi' u+c_v \Theta \phi'' v
-\mu \theta (u_y)^2-\mu (U\phi'')^2 \theta
-2\mu U \phi'' \theta u_y-\mu \Theta \phi'(u_y)^2)\\
&\quad \quad -D^\alpha(\!2\mu \Theta U \phi' \phi'' u_y+2\mu U\phi'' u_y
+\mu (u_y)^2+\nu (h_y)^2+2\nu H \phi' h_y).
\end{aligned}
\end{equation*}
\textbf{{Estimate for $\|D^{\alpha}(\Theta_x \phi' u)\|_{L^2_{l+k}(\Omega)}$}.}
In view of the definition of $\phi$(see \eqref{function1}), we can obtain
\begin{equation}\label{c217}
\begin{aligned}
&\|D^{\alpha}(\Theta_x \phi' u)\|_{L^2_{l+k}(\Omega)}
\le C\sum_{\widetilde{\alpha}\le \alpha}
\|D^{\widetilde{\alpha}}u\cdot D^{\alpha-\widetilde{\alpha}}(\Theta_x \phi')\|_{L^2_{l+k}(\Omega)}\\
&\le C\sum_{\widetilde{\alpha}\le \alpha}
\|\langle y\rangle^{l+\widetilde{k}}D^{\widetilde{\alpha}}u\|_{L^2(\Omega)}
\|\langle y\rangle^{k-\widetilde{k}}D^{\alpha-\widetilde{\alpha}}(\Theta_x \phi')\|_{L^\infty(\Omega)}\\
&\le C\|u\|_{\mathcal{H}^m_l}
\sum_{|{\beta}| \le m+2}\|\partial_\tau^{\beta}\Theta\|_{L^2(\mathbb{T}_x)}.
\end{aligned}
\end{equation}
\textbf{{Estimate for $\|D^{\alpha}(\Theta \phi'' v)\|_{L^2_{l+k}(\Omega)}$}.}
By virtue of the divergence free condition of velocity, one arrives at
\begin{equation}\label{c218}
\|D^{\alpha}(\Theta \phi'' v)\|_{L^2_{l+k}(\Omega)}
\le \sum_{\widetilde{\alpha} \le \alpha}
\|D^{\widetilde{\alpha}}v D^{\alpha-\widetilde{\alpha}}(\Theta \phi'')\|_{L^2_{k+l}(\Omega)}
\le \sum_{\widetilde{\alpha} \le \alpha}
\|D^{\widetilde{\alpha}+E_2}\partial_y^{-1}u D^{\alpha-\widetilde{\alpha}}
(\Theta \phi'')\|_{L^2_{k+l}(\Omega)}.
\end{equation}
If $\widetilde{k}=0$, the application of Hardy type inequality \eqref{e7} yields directly
\begin{equation}\label{c219}
\begin{aligned}
&\|D^{\widetilde{\alpha}+e_2}\partial_y^{-1}u \cdot D^{\alpha-\widetilde{\alpha}}
(\Theta \phi'')\|_{L^2_{k+l}(\Omega)}\\
&\le \|\langle y \rangle^{-1}
{\partial_\tau^{\widetilde{\beta}+e_2}\partial_y^{-1}u}\|_{L^2(\Omega)}
\|\langle y \rangle^{k+l+1}\partial_\tau^{\beta-\widetilde{\beta}}
\partial_y^{k}(\Theta \phi'')\|_{L^\infty(\Omega)}\\
&\le \|\partial_\tau^{\widetilde{\beta}+e_2} u\|_{L^2(\Omega)}
\|\partial_\tau^{\beta-\widetilde{\beta}} \Theta\|_{L^\infty(\mathbb{T}_x)}\\
&\le C\|u\|_{\mathcal{H}^m_0}
\|\partial_\tau^{\beta-\widetilde{\beta}} \Theta\|_{L^\infty(\mathbb{T}_x)}.
\end{aligned}
\end{equation}
If $\widetilde{k} \ge 1$, it is easy to deduce that
\begin{equation}\label{c220}
\begin{aligned}
&\|D^{\widetilde{\alpha}+e_2}\partial_y^{-1}u \cdot D^{\alpha-\widetilde{\alpha}}
(\Theta \phi'')\|_{L^2_{k+l}(\Omega)}\\
&\le \|\partial_\tau^{\widetilde{\beta}+e_2}
\partial_y^{\widetilde{k}-1}u\cdot
\partial_\tau^{\beta-\widetilde{\beta}}(\Theta \phi'')\|_{L^2_{k+l}(\Omega)}\\
&\le \|\langle y\rangle^{\widetilde{k}-1}\partial_\tau^{\widetilde{\beta}+e_2}
\partial_y^{\widetilde{k}-1}u\|_{L^2(\Omega)}
\|\langle y\rangle^{l+k-\widetilde{k}+1}\partial_\tau^{\beta-\widetilde{\beta}}
\partial_y^{k-\widetilde{k}}(\Theta \phi'')\|_{L^\infty(\Omega)}\\
&\le C\|u\|_{\mathcal{H}^m_0}
\|\partial_\tau^{\beta-\widetilde{\beta}} \Theta\|_{L^\infty(\mathbb{T}_x)}.
\end{aligned}
\end{equation}
Then, substituting the estimates \eqref{c219} and \eqref{c220}
into \eqref{c218}, we find
\begin{equation}\label{c221}
\|D^{\alpha}(\Theta \phi'' v)\|_{L^2_{l+k}(\Omega)}
\le C\|u\|_{\mathcal{H}^m_0}
\sum_{|{\beta}| \le m+1}\|\partial_\tau^{\beta}\Theta\|_{L^2(\mathbb{T}_x)}.
\end{equation}
\textbf{{Estimate  for $\|D^{\alpha}(\theta (u_y)^2)\|_{L^2_{l+k}(\Omega)}$}.}
Indeed, it is easy to check that
\begin{equation}\label{c222}
\begin{aligned}
&\|D^{\alpha}(\theta (u_y)^2)\|_{L^2_{l+k}(\Omega)}\\
&\le  \|\theta \partial_y u \cdot \partial_y D^{\alpha} u\|_{L^2_{l+k}(\Omega)}
+C\sum_{0<\widetilde{\alpha}\le \alpha}
\|\theta D^{\widetilde{\alpha}-E_i}D^{E_i}\partial_y u\cdot
D^{\alpha-\widetilde{\alpha}}\partial_y u\|_{L^2_{l+k}(\Omega)}\\
&\quad +C\sum_{0< \alpha \le  \alpha}
\|D^{\widetilde{\alpha}-E_i}D^{E_i} \theta \cdot
D^{\alpha-\widetilde{\alpha}}((\partial_y u)^2)\|_{L^2_{l+k}(\Omega)}.
\end{aligned}
\end{equation}
On one hand, we apply the Sobolev inequality to deduce that
\begin{equation}\label{c223}
\begin{aligned}
&\|\theta \partial_y u  D^{\alpha}\partial_y u\|_{L^2_{l+k}(\Omega)}\\
&\le \|D^{\alpha}\partial_y u\|_{L^2_{l+k}(\Omega)}
\|\theta\|_{L^\infty(\Omega)}\|\partial_y u\|_{L^\infty(\Omega)}\\
&\le \|D^{\alpha}\partial_y u\|_{L^2_{l+k}(\Omega)}
\|\theta\|_{H^2(\Omega)}\|\partial_y u\|_{H^2(\Omega)}.
\end{aligned}
\end{equation}
On the other hand, the application of inequality \eqref{e3}
and Sobolev inequality yields directly
\begin{equation}\label{c224}
\begin{aligned}
&\|\theta D^{\widetilde{\alpha}-E_i}D^{E_i}\partial_y u\cdot
D^{\alpha-\widetilde{\alpha}}\partial_y u\|_{L^2_{l+k}(\Omega)}\\
&\le C\|\theta\|_{L^\infty}\|D^{E_i}\partial_y u\|_{\mathcal{H}^{m-1}_{0}}
\|\partial_y u\|_{\mathcal{H}^{m-1}_{l+1}}\\
&\le C\|\partial_y u\|_{\mathcal{H}^{m}_{0}}\|\theta\|_{H^2}
\|u\|_{\mathcal{H}^{m}_{l}},
\end{aligned}
\end{equation}
and
\begin{equation}\label{c225}
\begin{aligned}
&\|D^{\widetilde{\alpha}-E_i}D^{E_i} \theta \cdot
D^{\alpha-\widetilde{\alpha}}((\partial_y u)^2)\|_{L^2_{l+k}(\Omega)}\\
&\le C\|D^{E_i} \theta\|_{\mathcal{H}^{m-1}_0}
     \|(\partial_y u)^2\|_{\mathcal{H}^{m-1}_{l+1}}
\le C\|\theta\|_{\mathcal{H}^{m}_0}\|u\|_{\mathcal{H}^{m}_{l}}^2,
\end{aligned}
\end{equation}
provided that $m\ge 4$.
Substituting the estimates \eqref{c223}-\eqref{c225} into \eqref{c222},
we obtain for $m \ge 4$
\begin{equation}\label{c226}
\begin{aligned}
\|D^{\alpha}(\theta(\partial_y u)^2)\|_{L^2_{l+k}(\Omega)}
\le
\|D^{\alpha}\partial_y u\|_{L^2_{l+k}(\Omega)}\|(u, \theta)\|_{\mathcal{H}^{m}_{l}}^2
+\|\partial_y u\|_{\mathcal{H}^{m}_{0}}\|(u, \theta)\|_{\mathcal{H}^{m}_{l}}^2
+\|(u, \theta)\|_{\mathcal{H}^{m}_l}^3.
\end{aligned}
\end{equation}
\textbf{{Estimate  for $\|D^{\alpha}( (U\phi'')^2 \theta)\|_{L^2_{l+k}(\Omega)}$}.}
By virtue of the definition of \eqref{function1},
it is easy to deduce that
\begin{equation}\label{c227}
\begin{aligned}
&\|D^{\alpha}( (U\phi'')^2 \theta)\|_{L^2_{l+k}(\Omega)}
\le C\sum_{\widetilde{\alpha}\le \alpha}
\|D^{\widetilde{\alpha}}\theta
D^{\alpha-\widetilde{\alpha}} [(U)^2(\phi'')^2 ]\|_{L^2_{l+k}(\Omega)}\\
&\le C\sum_{\widetilde{\alpha}\le \alpha}
\|\langle y \rangle^{\widetilde{k}}D^{\widetilde{\alpha}}\theta\|_{L^2(\Omega)}
\|\langle y \rangle^{k-\widetilde{k}+l}
D^{\alpha-\widetilde{\alpha}} [(U)^2(\phi'')^2 ]\|_{L^\infty(\Omega)}\\
&\le C\|\theta\|_{\mathcal{H}^{m}_0}
\sum_{|{\beta}|\le m+1}
\|\partial_\tau^{\beta}U\|_{L^2(\mathbb{T}_x)}^2.
\end{aligned}
\end{equation}
Similarly, we  can find that
\begin{equation}\label{c228}
\begin{aligned}
&\|D^{\alpha}(\Theta U \phi' \phi'' u_y)\|_{L^2_{l+k}(\Omega)}
\le C\|\partial_y u\|_{\mathcal{H}^{m}_0}
\sum_{|{\beta}|\le m+1}
\|\partial_\tau^{\beta}(U,\Theta)\|_{L^2(\mathbb{T}_x)}^2,\\
&\|D^{\alpha}(U\phi'' u_y)\|_{L^2_{l+k}(\Omega)}
\le C \|\partial_y u\|_{\mathcal{H}^{m}_0}
\sum_{|{\beta}|\le m+1}
\|\partial_\tau^{\beta}U\|_{L^2(\mathbb{T}_x)},\\
&\|D^{\alpha}(U \phi'' \theta \partial_y u)\|_{L^2_{l+k}(\Omega)}
\le C\|\partial_y u\|_{\mathcal{H}^{m}_0}\|\theta\|_{\mathcal{H}^{m}_0}
\sum_{|\beta|\le m+1}\|\partial_\tau^{\beta}U\|_{L^2(\mathbb{T}_x)}.
\end{aligned}
\end{equation}
\textbf{{Estimate  for $\|D^{\alpha}(H \phi' h_y)\|_{L^2_{l+k}(\Omega)}$}.}
We apply the inequality \eqref{e3} to deduce that
\begin{equation}\label{c229}
\begin{aligned}
&\|D^{\alpha}(H \phi' h_y)\|_{L^2_{l+k}(\Omega)}\\
&\le
C\sum_{0<\widetilde{\alpha}\le \alpha}
\|\langle y\rangle^{\widetilde{k}}D^{\widetilde{\alpha}}(H\phi')\|_{L^\infty(\Omega)}
\|\langle y\rangle^{k-\widetilde{k}+l}D^{\alpha-\widetilde{\alpha}}h_y\|_{L^2(\Omega)}\\
&\quad +\|H \phi'\|_{L^\infty(\Omega)}\|D^\alpha h_y\|_{L^2_{l+k}(\Omega)}\\
&\le \|H\|_{L^\infty(\mathbb{T}_x)}\|D^\alpha h_y\|_{L^2_{l+k}(\Omega)}
+\|h\|_{\mathcal{H}^m_l}
\sum_{|\beta|\le m+1}\|\partial_\tau^\beta H\|_{L^2(\mathbb{T}_x)}.
\end{aligned}
\end{equation}
\textbf{{Estimate  for $\|D^{\alpha}[(\partial_y u)^2]\|_{L^2_{l+k}(\Omega)}$}.}
By virtue of the inequality \eqref{e3} and the Sobolev inequality, we find
\begin{equation}\label{c230}
\begin{aligned}
&\|D^{\alpha}[(\partial_y u)^2]\|_{L^2_{l+k}(\Omega)}\\
&\le  \|\partial_y u~D^{\alpha}\partial_y u \|_{L^2_{l+k}(\Omega)}
+C\sum_{0<\widetilde{\alpha} \le \alpha}
\|D^{\widetilde{\alpha}-E_i} D^{E_i}\partial_y u ~
D^{\alpha-\widetilde{\alpha}}\partial_y u\|_{L^2_{l+k}(\Omega)}\\
&\le  \|\partial_y u~\|_{L^\infty(\Omega)}
\|D^{\alpha}\partial_y u \|_{L^2_{l+k}(\Omega)}
+C\|D^{E_i}\partial_y u\|_{\mathcal{H}^{m-1}_0}
\|\partial_y u\|_{\mathcal{H}^{m-1}_{l+1}}\\
&\le \|\partial_y u~\|_{H^2(\Omega)}
\|D^{\alpha}\partial_y u \|_{L^2_{l+k}(\Omega)}
+C\|\partial_y u\|_{\mathcal{H}^{m}_0}\|u\|_{\mathcal{H}^{m}_{l}},
\end{aligned}
\end{equation}
provided that $m \ge 4$.
Similarly, it is easy to deduce that
\begin{equation}\label{c231}
\|D^{\alpha}[(\partial_y h)^2]\|_{L^2_{l+k}(\Omega)}
\le C\|\partial_y h\|_{H^2(\Omega)}
\|D^{\alpha}\partial_y h \|_{L^2_{l+k}(\Omega)}
+C\|\partial_y h\|_{\mathcal{H}^{m}_0}\|h\|_{\mathcal{H}^{m}_{l}}.
\end{equation}
\textbf{{Estimate  for $\|D^{\alpha}(\Theta \phi'(u_y)^2)\|_{L^2_{l+k}(\Omega)}$}.}
In view of the inequality \eqref{e3}, it is easy to deduce
\begin{equation}\label{c232}
\begin{aligned}
&\|D^{\alpha}(\Theta \phi'(u_y)^2)\|_{L^2_{l+k}(\Omega)}\\
&\le
C\sum_{0\le \widetilde{\alpha} < \alpha}
\|\langle y \rangle^{\widetilde{k}+l}D^{\widetilde{\alpha}} (u_y^2)\|_{L^2(\Omega)}
\|\langle y \rangle^{k-\widetilde{k}}D^{\alpha-\widetilde{\alpha}}
(\Theta \phi')\|_{L^\infty(\Omega)}\\
&\quad +\|\Theta \phi'\|_{L^\infty(\Omega)}
\|\partial^\alpha (u_y^2)\|_{L^2_{k+l}(\Omega)}\\
&\le  C\|\Theta\|_{H^1(\mathbb{T}_x)}(\|\partial_y u~\|_{H^2(\Omega)}
     \|D^{\alpha}\partial_y u \|_{L^2_{l+k}(\Omega)}+
     \|\partial_y u\|_{\mathcal{H}^{m}_0}\|u\|_{\mathcal{H}^{m}_{l}})\\
&\quad +C\|\partial_y u\|_{\mathcal{H}^m_0}
     \|u\|_{\mathcal{H}^m_l}
     \sum_{|\beta| \le m+1}
     \|\partial_\tau^{{\beta}}\Theta\|_{L^2(\mathbb{T}_x)}.
\end{aligned}
\end{equation}
Combining the estimates \eqref{c217}, \eqref{c221},
\eqref{c226}-\eqref{c232} and applying the Cauchy inequality, we find
\begin{equation*}
\begin{aligned}
&-\int_\Omega
D^\alpha I_{23} \cdot\langle y \rangle^{2k+2l}D^\alpha \theta dxdy\\
&\le \frac{\mu}{4}\|D^\alpha \partial_y u(t)\|_{L^2_{l+k}(\Omega)}
+\frac{\nu}{4}\|D^\alpha \partial_y h(t)\|_{L^2_{l+k}(\Omega)}
+\delta_1 \mu\|\partial_y u(t)\|_{\mathcal{H}^m_0}^2\\
&\quad +\delta_1 \nu\|\partial_y h(t)\|_{\mathcal{H}^m_0}^2
+\delta_1^{-1}(\|(u, \theta, h)(t)\|_{\mathcal{H}^m_l}^8+1)
+\sum_{|\beta| \le m+2}\|\partial_\tau^\beta(U, \Theta, H)(t)\|_{L^2(\mathbb{T}_x)}^8,
\end{aligned}
\end{equation*}
which, together with the estimates \eqref{c23} and \eqref{c216}, yields
\begin{equation}\label{c233}
\begin{aligned}
&-\int_\Omega
D^\alpha I_2 \cdot\langle y \rangle^{2k+2l}D^\alpha \theta dxdy\\
&\le \frac{\mu}{4}\|D^\alpha \partial_y u(t)\|_{L^2_{l+k}(\Omega)}
+\frac{\nu}{4}\|D^\alpha \partial_y h(t)\|_{L^2_{l+k}(\Omega)}
+\delta_1 \mu\|\partial_y u(t)\|_{\mathcal{H}^m_0}^2\\
&\quad +\delta_1 \nu\|\partial_y h(t)\|_{\mathcal{H}^m_0}^2
+\delta_1^{-1}(\|(u, \theta, h)(t)\|_{\mathcal{H}^m_l}^8+1)
+\sum_{|\beta| \le m+2}\|\partial_\tau^\beta(U, \Theta, H)(t)\|_{L^2(\mathbb{T}_x)}^8.
\end{aligned}
\end{equation}
Similarly(or see \cite{Liu-Xie-Yang}), it is easy to deduce that
\begin{equation}\label{c234}
\begin{aligned}
&-\int_\Omega
D^\alpha I_1 \cdot\langle y \rangle^{2k+2l}D^\alpha u dxdy
-\int_\Omega
D^\alpha I_3 \cdot\langle y \rangle^{2k+2l}D^\alpha h dxdy\\
&\le \frac{\kappa}{4}\|D^\alpha \partial_y \theta(t)\|_{L^2_{k+l}(\Omega)}^2
+C(\|(u, \theta, h)(t)\|_{\mathcal{H}^m_l}^4+1)
+\sum_{|\beta| \le m+2}\|\partial_\tau^\beta(U, \Theta, H)(t)\|_{L^2(\mathbb{T}_x)}^4.
\end{aligned}
\end{equation}
Therefore, the combination of \eqref{c233} and \eqref{c234} completes the proof of \eqref{337}.
\end{proof}

\subsection{Weighted $H_l^m-$estimates only in tangential variable.}

\quad Similar to the classical Prandtl equations, an essential difficulty
for solving the problem \eqref{eq4} arises from the loss of one derivative
in the tangential variable $x$ in the terms
$v\partial_y u-g\partial_y h, v\partial_y h-g\partial_y u$ and
$v\partial_y \theta$.
More precisely, we recall the following nonlinear MHD boundary layer equations
\begin{equation}\label{eq5}
\left\{
\begin{aligned}
&\partial_t u+[(u+U\phi')\partial_x+(v-U_x \phi)\partial_y]u
-[(h+H\phi')\partial_x+(g-H_x \phi)\partial_y]h\\
&~~-\mu{\partial_y}[(\theta+\Theta \phi'(y)+1){\partial_y}u]
+U \phi'' v-H \phi'' g+II_1=r_1,\\
&c_v\{\partial_t \theta\!+\!(u+U\phi')\partial_x \theta\!+\!(v-U_x \phi)\partial_y \theta\}
\!-\!\kappa \partial_y^2 \theta\!+\!c_v \Theta \phi'' v+II_2=r_2,\\
&\partial_t h+[(u+U\phi')\partial_x+(v-U_x \phi)\partial_y]h
-[(h+H\phi')\partial_x+(g-H_x \phi)\partial_y]u
-\nu \partial_y^2 h\\
&\quad +H \phi'' v-U \phi'' g+II_3=r_3,\\
&\partial_x u+\partial_y v=0,\quad \partial_x h+\partial_y g=0,\\
\end{aligned}
\right.
\end{equation}
where
\begin{equation*}
\begin{aligned}
&II_1\triangleq U_x \phi' u-H_x \phi' h-U\phi^{(3)}\theta-U\phi''\theta_y,
\quad II_3\triangleq H_x \phi' u- U_x \phi' h,\\
&II_2
\triangleq c_v \Theta_x \phi' u-\mu \theta (u_y)^2-\mu (U\phi'')^2 \theta
-2\mu U \phi'' \theta u_y\!-\!\mu \Theta \phi'(u_y)^2\\
&\quad \quad ~ -2\mu \Theta U \phi' \phi'' u_y-2\mu U\phi'' u_y-\mu (u_y)^2
-\nu (h_y)^2-2\nu H \phi' h_y.
\end{aligned}
\end{equation*}
Then, applying $\beta-$th$(|\beta|=m)$ order tangential derivatives
on the equations \eqref{eq5}, we find
\begin{equation}\label{eq6}
\left\{
\begin{aligned}
&\partial_t \partial_\tau^{\beta}u
+[(u+U\phi')\partial_x+(v-U_x \phi)\partial_y]\partial_\tau^\beta u
-[(h+H\phi')\partial_x+(g-H_x \phi)\partial_y]\partial_\tau^\beta h\\
&\quad \!+\!(\partial_y u+U\phi'')\partial_\tau^\beta v
\!-\!(\partial_y h+H\phi'')\partial_\tau^\beta g
\!-\!\mu \partial_y[(\theta+\Theta \phi'+1)\partial_y \partial_\tau^\beta u]
=\partial_\tau^\beta(r_1-II_1)+R_u^\beta,\\
&c_v[\partial_t\!+\!(u+U\phi')\partial_x\!+\!(v-U_x \phi)\partial_y]
\partial_\tau^\beta \theta
\!-\!\kappa \partial_y^2 \partial_\tau^\beta \theta
\!+\!c_v(\partial_y \theta+\Theta \phi'')\partial_\tau^\beta v
=\partial_\tau^\beta(r_2\!-\!II_2)+R_{\theta}^\beta,\\
&\partial_t \partial_\tau^\beta h
+[(u+U\phi')\partial_x+(v-U_x \phi)\partial_y]\partial_\tau^\beta h
-[(h+H\phi')\partial_x+(g-H_x \phi)\partial_y]\partial_\tau^\beta u\\
&\quad+(\partial_y h+H\phi'')\partial_\tau^\beta v
-(\partial_y u+U\phi'')\partial_\tau^\beta g
-\nu \partial_y^2 \partial_\tau^\beta h
=\partial_\tau^\beta(r_3-II_3)+R_h^\beta,
\end{aligned}
\right.
\end{equation}
where
\begin{equation}\label{eq6a}
\begin{aligned}
R_u^\beta
\triangleq &-[\partial_\tau^\beta, (u+U\phi')\partial_x-U_x \phi \partial_y]u
+[\partial_\tau^\beta, (h+H\phi')\partial_x-H_x \phi\partial_y]h\\
&-[\partial_\tau^\beta, U\phi'']v
+[\partial_\tau^\beta, H\phi'']g
+\mu\partial_y([\partial_\tau^\beta, (\theta+\Theta \phi'+1)\partial_y]u)\\
&-\sum_{0<\widetilde{\beta} <\beta}
C^{\beta}_{\widetilde{\beta}}
\partial_\tau^{\beta}v \cdot \partial_\tau^{\beta-\widetilde{\beta}} \partial_y u
+\sum_{0<\widetilde{\beta} <\beta}
C^{\beta}_{\widetilde{\beta}}
\partial_\tau^{\beta}g \cdot \partial_\tau^{\beta-\widetilde{\beta}} \partial_y h,\\
R_\theta^\beta \triangleq
&-c_v[\partial_\tau^\beta, (u+U\phi')\partial_x-U_x \phi \partial_y]\theta
-c_v [\partial_\tau^\beta, \Theta \phi'']v
-\sum_{0<\widetilde{\beta} <\beta}
 c_v C^{\beta}_{\widetilde{\beta}}
\partial_\tau^{\widetilde{\beta}}v\cdot
\partial_\tau^{\beta-\widetilde{\beta}}\partial_y \theta,\\
R_h^\beta\triangleq
&-[\partial_\tau^\beta, (u+U\phi')\partial_x-U_x \phi \partial_y]h
+[\partial_\tau^\beta, (h+H\phi')\partial_x-H_x \phi \partial_y]u
-[\partial_\tau^\beta, H\phi'']v\\
&+[\partial_\tau^\beta, U\phi'']g
-\sum_{0<\widetilde{\beta} <\beta}
C^{\beta}_{\widetilde{\beta}}
\partial_\tau^{\widetilde{\beta}}v\cdot
\partial_\tau^{\beta-\widetilde{\beta}}\partial_y h
+\sum_{0<\widetilde{\beta} <\beta}
C^{\beta}_{\widetilde{\beta}}
\partial_\tau^{\widetilde{\beta}}g\cdot
\partial_\tau^{\beta-\widetilde{\beta}}\partial_y u.
\end{aligned}
\end{equation}
On the other hand, similar to \cite{Liu-Xie-Yang},
it is easy to verify that the function $\partial_y^{-1} h$ satisfies
\begin{equation}\label{eq7}
\partial_t(\partial_y^{-1}h)+(v-U_x \phi)(h+H\phi')
-(g-H_x \phi)(u+U\phi')-\nu \partial_y h
=-H_t \phi+\nu H \phi'',
\end{equation}
or equivalently
\begin{equation}\label{eq8}
\partial_t (\partial_y^{-1}h)+(h+H\phi')v
+(u+U\phi')\partial_x(\partial_y^{-1}h)
-U_x \phi h+H_x \phi u-\nu \partial_y h
=H_t\phi(\phi'-1)+\nu H \phi''.
\end{equation}
In view of the divergence free condition $\partial_x h+\partial_y g=0$,
then there exists a stream function $\psi$ such that
\begin{equation}\label{eq9}
h=\partial_y \psi,\quad g=-\partial_x \psi,\quad \left. \psi\right|_{y=0}=0.
\end{equation}
Then, the combination of \eqref{eq8} and \eqref{eq9} implies that
the function $\psi$ satisfies the following equation
\begin{equation}\label{eq10}
\begin{aligned}
\partial_t \psi
+[(u+U\phi')\partial_x+(v-U_x \phi)\partial_y]\psi
-\nu \partial_y^2 \psi+H\phi' v+H_x \phi u=r_4,
\end{aligned}
\end{equation}
where
\begin{equation}\label{function5}
r_4 \triangleq H_t\phi(1-\phi')+\nu H \phi''.
\end{equation}
Then applying $m-$th order tangential spatial derivative
to the equation \eqref{eq10}, we find
\begin{equation}\label{eq11}
\partial_t \partial_\tau^\beta \psi
+[(u+U\phi')\partial_x+(v-U_x \phi)\partial_y]\partial_\tau^\beta \psi
+(h+H\phi')\partial_\tau^\beta v-\nu \partial_y^2 \partial_\tau^\beta \psi
=\partial_\tau^\beta r_4+R_4^\beta,
\end{equation}
where
\begin{equation}
\begin{aligned}
R_4^\beta\triangleq
&\!-\partial_\tau^\beta(H_x \phi u)
\!-[\partial_\tau^\beta, H\phi']v
\!-[\partial_\tau^\beta,(u+U\phi')\partial_x\!-U_x \phi \partial_y]\psi
\!-\!\!\!\sum_{0<\widetilde{\beta} <\beta}
C^{\beta}_{\widetilde{\beta}}
\partial_\tau^{\widetilde{\beta}}v\cdot
\partial_\tau^{\beta-\widetilde{\beta}}\partial_y \psi.
\end{aligned}
\end{equation}
Let us define the functions
\begin{equation}\label{function3}
u_\beta\triangleq \partial_\tau^\beta u-\eta_1 \partial_\tau^\beta \psi,
\quad
\theta_\beta\triangleq \partial_\tau^\beta \theta-\eta_2 \partial_\tau^\beta \psi,
\quad
h_\beta\triangleq \partial_\tau^\beta h-\eta_3 \partial_\tau^\beta \psi,
\end{equation}
where
\begin{equation*}
\eta_1\triangleq\frac{\partial_y u+U\phi''}{h+H\phi'},\quad
\eta_2\triangleq\frac{\partial_y \theta+\Theta\phi''}{h+H\phi'},\quad
\eta_3\triangleq\frac{\partial_y h+H\phi''}{h+H\phi'}.
\end{equation*}
Then, we can obtain the following estimates:
\begin{equation}\label{equ1}
M(t)^{-1}\|\partial_\tau^\beta(u, \theta, h)(t)\|_{L^2_l(\Omega)}
\le \|(u_\beta, \theta_\beta, h_\beta)(t)\|_{L^2_l(\Omega)}
\le M(t)\|\partial_\tau^\beta(u, \theta, h)(t)\|_{L^2_l(\Omega)}
\end{equation}
and
\begin{equation}\label{equ2}
\|\partial_y \partial_\tau^\beta (u, \theta, h)(t)\|_{L^2_l(\Omega)}
\le \|\partial_y(u_\beta, \theta_\beta, h_\beta)(t)\|_{L^2_l(\Omega)}
+M(t)\|h_\beta(t)\|_{L^2_l(\Omega)},
\end{equation}
where
\begin{equation}\label{mt}
\begin{aligned}
M(t)\triangleq
&~2\delta_0^{-1}(C\|(U, \Theta, H)(t)\|_{L^\infty(\mathbb{T}_x)}
+\|\langle y\rangle^{l+1}\partial_y (u, \theta, h)(t)\|_{L^\infty(\Omega)})\\
&~+2\delta_0^{-1}
(\|\langle y\rangle^{l+1}\partial_y^2 (u, \theta, h)(t)\|_{L^\infty(\Omega)}+1).
\end{aligned}
\end{equation}
The detail of proof for the estimates \eqref{equ1} and \eqref{equ2}
can be found in Appendix \ref{appendixB}.
On the other hand, we know from the assumption \eqref{p1} that
\begin{equation}\label{a341}
\|\langle y \rangle^{l+1}\partial_y^i (u, \theta, h)(t)\|_{L^\infty(\Omega)}
\le C\delta_0^{-1},\quad {\rm for}~i=1,2,\quad t\in [0, T].
\end{equation}
Then, one can get, for $\delta_0$ sufficiently small, that
\begin{equation}\label{a342}
M(t)\le 2 \delta_0^{-1}(C\|(U, \Theta, H)(t)\|_{L^\infty(T_x)}+2\delta_0^{-1}+1)
\le 6\delta_0^{-2}.
\end{equation}
Therefore, we deduce from the definition of functions \eqref{function3},
and equations \eqref{eq6} and \eqref{eq11} that
\begin{equation}\label{eq12}
\left\{
\begin{aligned}
&\partial_t u_\beta
+[(u+U\phi')\partial_x+(v-U_x \phi)\partial_y]u_\beta
-[(h+H\phi')\partial_x+(g-H_x \phi)\partial_y]h_\beta
+\nu \eta_1 \partial_y h_\beta\\
&\quad
-\mu\partial_y[(\theta+\Theta \phi'+1)\partial_y u_\beta]
-\mu \partial_y[(\theta+\Theta \phi'+1)(\partial_y \eta_1 \partial_\tau^\beta \psi
+\eta_1 \partial_\tau^\beta h)]=R_1^\beta,\\
&c_v[\partial_t\!+(u\!+U\phi')\partial_x\!+(v\!-U_x \phi)\partial_y]\theta_\beta
\!-\kappa \partial_y^2 \theta_\beta
\!-\kappa \partial_y(\partial_y \eta_2 \partial_\tau^\beta \psi
\!+\eta_2 \partial_\tau^\beta h)
+c_v \nu \eta_2 \partial_y h_\beta=R_2^\beta,\\
&\partial_t h_\beta
+[(u+U\phi')\partial_x+(v-U_x \phi)\partial_y]h_\beta
-[(h+H\phi')\partial_x+(g-H_x \phi)\partial_y]u_\beta
-\nu \partial_y^2 h_\beta=R_3^\beta,
\end{aligned}
\right.
\end{equation}
where
\begin{equation}\label{eq13}
\left\{
\begin{aligned}
R_1^\beta
&\triangleq
\partial_\tau^\beta(r_1-II_1)+R_u^\beta
-\eta_1 \partial_\tau^\beta r_4-\eta_1 R_4^\beta
-\nu \eta_1 \eta_3 \partial_\tau^\beta h
+\eta_3(g-H_x \phi)\partial_\tau^\beta h
-\zeta_1(\partial_\tau^\beta \psi),\\
R_2^\beta
&\triangleq
\partial_\tau^\beta(r_2-II_2)+R_\theta^\beta
-c_v \eta_2 \partial_\tau^\beta r_4-c_v \eta_2 R_4^\beta
-c_v \nu \eta_2 \eta_3 \partial_\tau^\beta h
-c_v \zeta_2 (\partial_\tau^\beta \psi),\\
R_3^\beta
&\triangleq
\partial_\tau^\beta(r_3-II_3)
+R_h^\beta-\eta_3 \partial_\tau^\beta r_4-\eta_3 R_4^\beta
+[2\nu \partial_y \eta_3+(g-H_x \phi)\eta_1]\partial_\tau^\beta h
-\zeta_3 (\partial_\tau^\beta \psi),
\end{aligned}
\right.
\end{equation}
with
\begin{equation}\label{eq14}
\left\{
\begin{aligned}
&\zeta_1\triangleq
\partial_t \eta_1
+[(u+U\phi')\partial_x+(v-U_x \phi)\partial_y]\eta_1
-[(h+H\phi')\partial_x+(g-H_x \phi)\partial_y]\eta_3
+\nu \eta_1 \partial_y \eta_3,\\
&\zeta_2\triangleq
\partial_t \eta_2
+[(u+U\phi')\partial_x+(v-U_x \phi)\partial_y]\eta_2
+\nu \eta_2 \partial_y \eta_3,\\
&\zeta_3\triangleq
\partial_t \eta_3
+[(u+U\phi')\partial_x+(v-U_x \phi)\partial_y]\eta_3
-[(h+H\phi')\partial_x+(g-H_x \phi)\partial_y]\eta_1
-\nu\partial_y^2 \eta_3.
\end{aligned}
\right.
\end{equation}
Also, we have the corresponding initial and boundary conditions as follows:
\begin{equation}\label{eq15}
\left\{
\begin{aligned}
&u_\beta|_{t=0}
=\partial_\tau^\beta u(0,x,y)
-\frac{\partial_y u_0(x,y)+U(0,x)\phi''(y)}{h_0(x,y)+H(0,x)\phi'(y)}
\int_0^y \partial_\tau^\beta  h(0,x,z)dz\triangleq u_{\beta 0}(x, y),\\
&\theta_\beta|_{t=0}
=\partial_\tau^\beta \theta(0,x,y)
-\frac{\partial_y \theta_0(x,y)+\Theta(0,x)\phi''(y)}{h_0(x,y)+H(0,x)\phi'(y)}
\int_0^y \partial_\tau^\beta  h(0,x,z)dz
\triangleq \theta_{\beta 0}(x, y),\\
&h_\beta|_{t=0}
=\partial_\tau^\beta h(0,x,y)
-\frac{\partial_y h_0(x,y)+H(0,x)\phi''(y)}{h_0(x,y)+H(0,x)\phi'(y)}
\int_0^y \partial_\tau^\beta  h(0,x,z)dz\triangleq h_{\beta 0}(x, y),\\
&u_\beta|_{y=0}=\theta_\beta|_{y=0}=\partial_y h_\beta|_{y=0}=0.
\end{aligned}
\right.
\end{equation}
Moreover, by combining $\psi=\partial_y^{-1}h$ with the inequality \eqref{e3},
it is easy to check that
\begin{equation}\label{estimate3}
\|\langle y\rangle^{-1}\partial_\tau^\beta \psi(t)\|_{L^2(\Omega)}
\le 2 \|\partial_\tau^\beta h(t)\|_{L^2(\Omega)}.
\end{equation}
By virtue of Sobolev embedding inequality and direct computation, we have
for any $\lambda \in \mathbb{R}$ and $i=1,2,3$,
\begin{equation}\label{estimate4}
\begin{aligned}
&\|\langle y\rangle^\lambda \eta_i\|_{L^\infty(\Omega)}
\le C\delta_0^{-1}(\|(U, \Theta, H)(t)\|_{L^\infty(\mathbb{T}_x)}
+\|(u, \theta, h)(t)\|_{\mathcal{H}^{3}_{\lambda-1}}),\\
&\|\langle y\rangle^\lambda \partial_y \eta_i\|_{L^\infty(\Omega)}
\le C\delta_0^{-2}(\|(U, \Theta, H)(t)\|_{L^\infty(\mathbb{T}_x)}
+\|(u, \theta, h)(t)\|_{\mathcal{H}^{4}_{\lambda-1}})^2,\\
&\|\langle y\rangle^\lambda \zeta_i\|_{L^\infty(\Omega)}
\le C\delta_0^{-3}(\sum_{|\beta|\le 1}\|\partial_\tau^\beta
(U, \Theta, H)(t)\|_{L^\infty(\mathbb{T}_x)}
+\|(u, \theta, h)(t)\|_{\mathcal{H}^{5}_{\lambda-1}})^3.
\end{aligned}
\end{equation}

Now, we are going to establish the $L^2_l-$norms for the
quantity $(u_\beta, \theta_\beta, h_\beta)$.

\begin{lemma}\label{lemma3.4}[$L^2_l$-estimate on $(u_\beta, \theta_\beta, h_\beta)$]
Under the hypotheses of Proposition \ref{pro1}, we have for any $t\in [0, T]$
and the quantity $(u_\beta, \theta_\beta, h_\beta)$ given in \eqref{function3}  that
\begin{equation}\label{341}
\begin{aligned}
&\sum_{|\beta|=m}\left\{\frac{d}{dt}\left(
\|(u_\beta, h_\beta)(t)\|_{L^2_l(\Omega)}^2
+c_v\|\theta_\beta(t)\|_{L^2_l(\Omega)}^2\right)
+c_0\|\partial_y(u_\beta,\theta_\beta, h_\beta)(t)\|_{L^2_l(\Omega)}^2\right\}\\
&\le C\delta_0^{-4}(1+
\sum_{|\beta|\le m+2}\|\partial_\tau^\beta(U, \Theta, H)(t)\|_{L^2(\mathbb{T}_x)}^{10})
+C\delta_0^{-4}\|(u_\beta, \theta_\beta, h_\beta)(t)\|_{L^2_l(\Omega)}^4\\
&\quad +C\delta_0^{-4}\|(u, \theta, h)(t)\|_{\mathcal{H}^m_l}^{12}
+\sum_{|\beta|=m}
\left\{\sum_{i=1}^3\|\partial_\tau^\beta r_i\|_{L^2_l(\Omega)}^2
+\|\eta_i \partial_\tau^\beta r_4\|_{L^2_l(\Omega)}^2\right\},
\end{aligned}
\end{equation}
where the quantity $c_0$ is defined in Lemma \ref{lemma3.3}.
\end{lemma}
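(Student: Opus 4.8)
The plan is to run a weighted $L^2_l$ energy estimate directly on the reformulated system \eqref{eq12}, whose entire purpose is that the dangerous top-order terms $\partial_\tau^\beta v$ and $\partial_\tau^\beta g$ (each costing one tangential $x$-derivative) have already been cancelled by the construction \eqref{function3} and relegated to the source terms $R_i^\beta$ in \eqref{eq13}. Concretely, I would multiply \eqref{eq12}$_1$ by $\langle y\rangle^{2l}u_\beta$, \eqref{eq12}$_2$ by $\langle y\rangle^{2l}\theta_\beta$, and \eqref{eq12}$_3$ by $\langle y\rangle^{2l}h_\beta$, integrate over $\Omega$, and sum over $|\beta|=m$. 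The time-derivative terms assemble into $\tfrac12\frac{d}{dt}\|(u_\beta,\sqrt{c_v}\theta_\beta,h_\beta)\|_{L^2_l(\Omega)}^2$, so the task reduces to controlling the transport terms, the magnetic coupling terms, the parabolic terms, and the right-hand sides.

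For the transport part $[(u+U\phi')\partial_x+(v-U_x\phi)\partial_y]\,\cdot\,$ I would integrate by parts and invoke the divergence-free relations $\partial_x u+\partial_y v=0$; the leading contribution vanishes and only the weight-derivative remainder $\sim l\,\langle y\rangle^{-1}(v-U_x\phi)$ survives, which is tamed by the Hardy-type inequalities \eqref{e5}, \eqref{e7} exactly as in Step 1 of the proof of \eqref{337}. The genuinely MHD feature is the pair of cross terms $-[(h+H\phi')\partial_x+(g-H_x\phi)\partial_y]h_\beta$ in the $u_\beta$-equation and $-[(h+H\phi')\partial_x+(g-H_x\phi)\partial_y]u_\beta$ in the $h_\beta$-equation: after pairing them with $\langle y\rangle^{2l}u_\beta$ and $\langle y\rangle^{2l}h_\beta$ respectively and adding, an integration by parts in $x$ and $y$ makes their top-order contributions cancel via $\partial_x h+\partial_y g=0$, leaving only weighted remainders. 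For the parabolic terms $-\mu\partial_y[(\theta+\Theta\phi'+1)\partial_y u_\beta]$, $-\kappa\partial_y^2\theta_\beta$, $-\nu\partial_y^2 h_\beta$, integration by parts produces the dissipation; here the lower bound $\theta+\Theta\phi'+1\ge 1$ from Lemma \ref{lemma3.2} (via \eqref{321}) guarantees coercivity and yields the term $c_0\|\partial_y(u_\beta,\theta_\beta,h_\beta)\|_{L^2_l(\Omega)}^2$ with $c_0=\min\{\mu,\kappa,\nu\}$. The boundary contributions at $y=0$ drop out thanks to the homogeneous conditions $u_\beta|_{y=0}=\theta_\beta|_{y=0}=\partial_y h_\beta|_{y=0}=0$ recorded in \eqref{eq15}.

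It remains to absorb the correction terms carrying $\eta_i,\partial_y\eta_i$ (such as $\nu\eta_1\partial_y h_\beta$ and $\mu\partial_y[(\theta+\Theta\phi'+1)(\partial_y\eta_1\partial_\tau^\beta\psi+\eta_1\partial_\tau^\beta h)]$) together with the full right-hand sides $R_i^\beta$. For these I would systematically apply the pointwise bounds \eqref{estimate4} on $\eta_i,\partial_y\eta_i,\zeta_i$, the bound $M(t)\le 6\delta_0^{-2}$ from \eqref{a342}, and the equivalences \eqref{equ1}-\eqref{equ2}; the relation $\|\langle y\rangle^{-1}\partial_\tau^\beta\psi\|_{L^2(\Omega)}\le 2\|\partial_\tau^\beta h\|_{L^2(\Omega)}$ in \eqref{estimate3} lets me trade each factor $\partial_\tau^\beta\psi$ back for $\partial_\tau^\beta h$ with no loss of weight. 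The commutators inside $R_u^\beta,R_\theta^\beta,R_h^\beta,R_4^\beta$ are handled by the product inequality \eqref{e3} precisely as in the proof of \eqref{337}, and Young's inequality throws the resulting $\|\partial_y(\cdot)\|_{L^2_l(\Omega)}$ factors into the dissipation with a small constant. Tracking the powers $\eta_i\sim\delta_0^{-1}$, $\partial_y\eta_i\sim\delta_0^{-2}$, $\zeta_i\sim\delta_0^{-3}$ produces the prefactor $\delta_0^{-4}$ (from squaring $\partial_y\eta_i$), and the high powers of $\|(u,\theta,h)\|_{\mathcal{H}^m_l}$ and of the outer-flow norms are an artefact of the up-to-cubic nonlinear dependence of $\eta_i,\zeta_i$ on the data, squared through Young's inequality.

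The main obstacle is the treatment of the $\zeta_i(\partial_\tau^\beta\psi)$ contributions and, more generally, of $R_4^\beta$: these are exactly the places where the cancellation built into \eqref{function3} must be confirmed to hold at the top order, and where the most negative power $\delta_0^{-3}$ enters through the third estimate in \eqref{estimate4}. Since $\zeta_i$ is controlled only in $\mathcal{H}^5_{\lambda-1}$, this is the step that forces the hypothesis $m\ge 5$, and keeping the weighted products integrable while converting $\partial_\tau^\beta\psi$ into $\partial_\tau^\beta h$ via \eqref{estimate3} without reintroducing a loss of one $x$-derivative is the delicate point. Everything else is a bookkeeping application of \eqref{e1}, \eqref{e3}, \eqref{e5}, \eqref{e7} and Young's inequality.
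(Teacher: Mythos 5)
Your proposal follows essentially the same route as the paper's proof: a weighted $L^2_l$ energy estimate on the reformulated system \eqref{eq12}, with the magnetic cross terms cancelling after integration by parts via $\partial_x h+\partial_y g=0$ when the $u_\beta$- and $h_\beta$-identities are added, coercivity of the $u_\beta$-dissipation coming from the temperature lower bound \eqref{321}, the transport remainders tamed by the Hardy inequalities, and the $\eta_i$, $\partial_y\eta_i$, $\zeta_i$ corrections and the $R_i^\beta$ sources absorbed through \eqref{estimate3}, \eqref{estimate4}, \eqref{e3} and Young's inequality. The plan is correct and matches the paper's argument in all essential steps.
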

\begin{proof}
Multiplying the equation \eqref{eq12}$_1$ by $\langle y \rangle^{2l}u_\beta$,
integrating over $\Omega$ and integrating by part, we find
\begin{equation}\label{342}
\begin{aligned}
&\frac{1}{2}\frac{d}{dt}\int_{\Omega} \langle y \rangle^{2l}|u_\beta|^2dxdy
+\mu \int_{\Omega} (\theta+\Theta \phi'+1) \langle y \rangle^{2l}|\partial_y u_\beta|^2dxdy\\
&=-\int_{\Omega} [(h+H\phi')\partial_x +(g-H_x \phi)\partial_y]
u_\beta  \cdot \langle y \rangle^{2l} h_\beta dxdy
+l \int_{\Omega} (v-U_x \phi)\cdot \langle y \rangle^{2l-1}|u_\beta|^2 dxdy\\
&\quad -\nu \int_{\Omega} \eta_1 \partial_y h_\beta \cdot\langle y \rangle^{2l} u_\beta dxdy
-2l\int_{\Omega} (g-H_x \phi)\cdot\langle y \rangle^{2l-1}u_\beta h_\beta dxdy\\
&\quad -\mu \int_{\Omega} (\theta+\Theta \phi'+1)
(\partial_y \eta_1 \partial^\beta_\tau \psi+\eta_1 \partial_\tau^\beta h)
\cdot\langle y \rangle^{2l} \partial_y u_\beta dxdy\\
&\quad-2l \mu \int_{\Omega} (\theta+\Theta \phi'+1)
(\partial_y \eta_1 \partial_\tau^\beta \psi+\eta_1 \partial_\tau^\beta h)
\cdot\langle y \rangle^{2l-1}u_\beta dxdy\\
&\quad -2l \mu \int_{\Omega} (\theta+\Theta \phi'+1)\partial_y u_\beta
\cdot\langle y \rangle^{2l-1}u_\beta dxdy
+\int_{\Omega} R_1^\beta \cdot \langle y \cdot\rangle^{2l}u_\beta dxdy.
\end{aligned}
\end{equation}
In view of the inequality \eqref{e5}, Sobolev and Cauchy inequalities,
it is easy to deduce that
\begin{equation}\label{343}
\begin{aligned}
&\left|l \int_{\Omega} (v-U_x \phi)\langle y \rangle^{2l-1}|u_\beta|^2 dxdy\right|\\
&\le C(\|\langle y\rangle^{-1}\partial_y^{-1}u_x\|_{L^\infty(\Omega)}
+\|\langle y\rangle^{-1}(U_x \phi)\|_{L^\infty(\Omega)})\|u_\beta\|_{L^2_l(\Omega)}^2\\
&\le  C(\|u_x\|_{L^\infty(\Omega)}
+\|U_x \|_{L^\infty(\mathbb{T}_x)})\|u_\beta\|_{L^2_l(\Omega)}^2\\
&\le C(1+\|U_x \|_{H^1(\mathbb{T}_x)}^2)
+C(\|u\|_{\mathcal{H}^{m}_l}^2+\|u_\beta\|_{L^2_l(\Omega)}^4).
\end{aligned}
\end{equation}
Similarly, we also find that
\begin{equation}\label{344}
\left|-2l\int_\Omega(g-H_x \phi)\langle y \rangle^{2l-1}u_\beta h_\beta dxdy\right|
\le C(1+\|H_x \|_{H^1(\mathbb{T}_x)}^2)
+C(\|h\|_{\mathcal{H}^{m}_l}^2+\|(u_\beta,h_\beta)\|_{L^2_l(\Omega)}^4).
\end{equation}
By virtue of the Cauchy inequality and the estimate \eqref{estimate4},
one arrives at directly
\begin{equation}\label{345}
\begin{aligned}
&|-\nu \int_\Omega \eta_1 \partial_y h_\beta \langle y \rangle^{2l} u_\beta dxdy|\\
&\le \frac{\nu}{12} \|\partial_y h_\beta\|_{L^2_l(\Omega)}^2
+C\|\eta_1\|_{L^\infty(\Omega)}^2\|u_\beta\|_{L^2_l(\Omega)}^2\\
&\le \frac{\nu}{12} \|\partial_y h_\beta\|_{L^2_l(\Omega)}^2
+C\delta_0^{-2}
(\|(U, H)\|_{H^1(\mathbb{T}_x)}^2+\|(u, h)\|_{\mathcal{H}^3_{0}}^2)
\|u_\beta\|_{L^2_l(\Omega)}^2\\
&\le \frac{\nu}{12} \|\partial_y h_\beta\|_{L^2_l(\Omega)}^2
+C\delta_0^{-2}\|(U, H)\|_{H^1(\mathbb{T}_x)}^4
+C\delta_0^{-2}(\|(u, h)\|_{\mathcal{H}^m_{l}}^4+\|u_\beta\|_{L^2_l(\Omega)}^4).
\end{aligned}
\end{equation}
With the help of estimate \eqref{estimate4}, H\"{o}lder and Cauchy inequalities,
it is easy to check that
\begin{equation}\label{346}
\begin{aligned}
&\left|-\mu \int_\Omega (\theta+\Theta \phi'+1)
(\partial_y \eta_1 \partial^\beta_\tau \psi+\eta_1 \partial_\tau^\beta h)
\langle y \rangle^{2l} \partial_y u_\beta dxdy\right|\\
&\le \mu \|\partial_y u_\beta\|_{L^2_l(\Omega)}
      \|\theta+\Theta \phi'+1\|_{L^\infty(\Omega)}
      \|\langle y\rangle^{l+1}\partial_y \eta_1 \|_{L^\infty(\Omega)}
      \|\langle y\rangle^{-1}\partial_\tau^\beta \psi\|_{L^2(\Omega)}\\
&\quad +\mu \|\partial_y u_\beta\|_{L^2_l(\Omega)}
      \|\theta+\Theta \phi'+1\|_{L^\infty(\Omega)}
      \|\eta_1 \|_{L^\infty(\Omega)}
      \|\partial_\tau^\beta h\|_{L^2_l(\Omega)}\\
&\le   C\delta_0^{-4}(\|\theta\|_{H^2(\Omega)}
      +\|\Theta\|_{H^1(\mathbb{T}_x)}+1)^2
      (\|(U, \Theta, H)\|_{H^1(\mathbb{T}_x)}
      +\|(u, \theta, h)\|_{\mathcal{H}^4_l})^4
      \|\partial_\tau^\beta h\|_{L^2(\Omega)}^2\\
&\quad  +C\delta_0^{-2}(\|\theta\|_{H^2(\Omega)}
      +\|\Theta\|_{H^1(\mathbb{T}_x)}+1)^2
      (\|(U, \Theta, H)\|_{H^1(\mathbb{T}_x)}
      +\|(u, \theta, h)\|_{\mathcal{H}^3_l})^2
      \|\partial_\tau^\beta h\|_{L^2(\Omega)}^2\\
&\quad    +\frac{\mu}{12} \|\partial_y u_\beta\|_{L^2_l(\Omega)}^2\\
&\le \frac{\mu}{12} \|\partial_y u_\beta\|_{L^2_l(\Omega)}^2
     +C\delta_0^{-4}(1+\|(U, \Theta, H)\|_{H^1(\mathbb{T}_x)}^8)
     +C\delta_0^{-4}\|(u, \theta, h)\|_{\mathcal{H}_l^m}^{12}.
\end{aligned}
\end{equation}
Similarly, we can obtain the following estimates
\begin{equation}\label{347}
\begin{aligned}
&\left|-2l \mu \int_\Omega (\theta+\Theta \phi'+1)
(\partial_y \eta_1 \partial_\tau^\beta \psi+\eta_1 \partial_\tau^\beta h)
\langle y \rangle^{2l-1}u_\beta dxdy\right|\\
&\le C(\|\theta\|_{L^\infty(\Omega)}+\|\Theta\|_{L^\infty(\mathbb{T}_x)}+1)
     \|\langle y\rangle^l \partial_y \eta_1\|_{L^\infty(\Omega)}
     \|\langle y\rangle^{-1}\partial_\tau^\beta \psi\|_{L^2(\Omega)}
     \|u_\beta\|_{L^2_l(\Omega)}\\
&\quad +
     C(\|\theta\|_{L^\infty(\Omega)}+\|\Theta\|_{L^\infty(\mathbb{T}_x)}+1)
     \|\eta_1\|_{L^\infty(\Omega)}\|\partial_\tau^\beta h\|_{L^2_l(\Omega)}
     \|u_\beta\|_{L^2_l(\Omega)}\\
&\le C\delta_0^{-2}(\|\theta\|_{H^2(\Omega)}\!+\!\|\Theta\|_{H^1(\mathbb{T}_x)}\!+\!1)
      (\|(U, \Theta, H)\|_{H^1(\mathbb{T}_x)}
      \!+\!\|(u, \theta, h)\|_{\mathcal{H}^4_l})^2
      \|\partial_\tau^\beta h\|_{L^2(\Omega)}
      \|u_\beta\|_{L^2_l(\Omega)}\\
&\quad +C\delta_0^{-1}(\|\theta\|_{H^2(\Omega)}\!+\!\|\Theta\|_{H^1(\mathbb{T}_x)}\!+\!1)
      (\|(U, \Theta, H)\|_{H^1(\mathbb{T}_x)}
      \!+\!\|(u, \theta, h)\|_{\mathcal{H}^3_l})
      \|\partial_\tau^\beta h\|_{L^2(\Omega)}
      \|u_\beta\|_{L^2_l(\Omega)}\\
&\le C\delta_0^{-2}(1+\|(U, \Theta, H)\|_{H^1(\mathbb{T}_x)}^{10})
     +C\delta_0^{-2}(\|(u, \theta, h)\|_{\mathcal{H}^m_l}^8
     +\|u_\beta\|_{L^2_l(\Omega)}^4),
\end{aligned}
\end{equation}
and
\begin{equation}\label{348}
\begin{aligned}
&\left|-2l \mu \int_\Omega (\theta+\Theta \phi'+1)\partial_y u_\beta
\cdot \langle y \rangle^{2l-1}u_\beta dxdy\right|\\
&\le C\mu \|\partial_y u_\beta\|_{L^2_l(\Omega)}
          \|\theta+\Theta \phi'+1\|_{L^\infty(\Omega)}
          \|u_\beta\|_{L^2_l(\Omega)}\\
&\le \frac{\mu}{12} \|\partial_y u_\beta\|_{L^2_l(\Omega)}^2
      +C(1+\|\Theta\|_{H^1(T_x)}^2)^2
      +C(\|\theta\|_{\mathcal{H}^m_l}^4+\|u_\beta\|_{L^2_l(\Omega)}^4).
\end{aligned}
\end{equation}
By virtue of the lower bound estimate for temperature \eqref{321}, we get
\begin{equation}\label{349}
\mu \int_{\Omega} (\theta+\Theta \phi'+1) \langle y \rangle^{2l}|\partial_y u_\beta|^2dxdy
\ge \mu \int_{\Omega} \langle y \rangle^{2l}|\partial_y u_\beta|^2dxdy.
\end{equation}
Plugging the estimates \eqref{343}-\eqref{349} into \eqref{342}, it is easy to deduce that
\begin{equation}\label{3410}
\begin{aligned}
&\frac{d}{dt}\int_\Omega \langle y \rangle^{2l}|u_\beta|^2 dxdy
+\mu \int_\Omega \langle y \rangle^{2l}|\partial_y u_\beta|^2dxdy\\
&\le-\int_{\Omega} [(h+H\phi')\partial_x +(g-H_x \phi)\partial_y]
u_\beta  \cdot \langle y \rangle^{2l} h_\beta dxdy\\
&\quad +\frac{\nu}{12} \|\partial_y h_\beta\|_{L^2_l(\Omega)}^2
+C\delta_0^{-4}(1+\|(U, \Theta, H)\|_{H^2(\mathbb{T}_x)}^{10})\\
&\quad +C\delta_0^{-4}(\|(u, \theta, h)\|_{\mathcal{H}^m_l}^{12}
+\|(u_\beta, h_\beta)\|_{L^2_l(\Omega)}^4)
+\int_\Omega R_1^\beta \cdot \langle y \rangle^{2l}u_\beta dxdy.
\end{aligned}
\end{equation}
Multiplying equation \eqref{eq12}$_2$ by $\langle y \rangle^{2l} h_\beta$,
integrating over $\Omega$ and integrating by part, we obtain
\begin{equation}\label{3411}
\begin{aligned}
&\frac{1}{2}\frac{d}{dt}\int_\Omega \langle y \rangle^{2l}|h_\beta|^2 dxdy
+\nu \int_\Omega \langle y \rangle^{2l} |\partial_y h_\beta|^2 dxdy\\
&=l\int_\Omega (v-U_x \phi)\langle y \rangle^{2l-1}|h_\beta|^2dxdy
-2l \nu \int_\Omega \langle y \rangle^{2l-1} \partial_y h_\beta\cdot h_\beta dxdy\\
&\quad +\int_\Omega[(h+H\phi')\partial_x+(g-H_x \phi)\partial_y]
u_\beta \cdot \langle y \rangle^{2l} h_\beta dxdy
+\int_\Omega R_3^\beta \cdot \langle y \rangle^{2l} h_\beta dxdy.
\end{aligned}
\end{equation}
Similar to the estimate \eqref{343}, we can obtain directly
\begin{equation}\label{3412}
\left|c_v l \int_\Omega (v-U_x \phi)\langle y \rangle^{2l-1}|h_\beta|^2dxdy\right|
\le C\|U_x\|_{H^1(\mathbb{T}_x)}^2
     +C(\|u\|_{\mathcal{H}^m_l}^2+\|h_\beta\|_{L^2_l(\Omega)}^4).
\end{equation}
In view of the Cauchy inequality, it is easy to deduce that
\begin{equation}\label{3413}
\left|2l \nu \int_\Omega \langle y \rangle^{2l-1} \partial_y h_\beta\cdot h_\beta dxdy\right|
\le \frac{\nu}{2} \|\partial_y h_\beta\|_{L^2_l(\Omega)}^2
+C\|h_\beta\|_{L^2_l(\Omega)}^2.
\end{equation}
Substituting \eqref{3412} and \eqref{3413} into \eqref{3411},
we find
\begin{equation*}
\begin{aligned}
&\frac{d}{dt}\int_\Omega \langle y \rangle^{2l}|h_\beta|^2dxdy
+\nu \int_\Omega \langle y \rangle^{2l} |\partial_y h_\beta|^2 dxdy\\
&\le \int_\Omega[(h+H\phi')\partial_x+(g-H_x \phi)\partial_y]
u_\beta \cdot \langle y \rangle^{2l} h_\beta dxdy
+\int_\Omega R_3^\beta \cdot \langle y \rangle^{2l} h_\beta dxdy \\
&\quad +C(1+\|U_x\|_{H^1(\mathbb{T}_x)}^2)
     +C(\|u\|_{\mathcal{H}^m_l}^2+\|h_\beta\|_{L^2_l(\Omega)}^4),
\end{aligned}
\end{equation*}
which, together with the inequality \eqref{3410}, implies that
\begin{equation}\label{3414}
\begin{aligned}
&\frac{d}{dt}\int_\Omega \langle y \rangle^{2l}(|u_\beta|^2+|h_\beta|^2) dxdy
+\mu \int_\Omega \langle y \rangle^{2l}|\partial_y u_\beta|^2dxdy
+\nu \int_\Omega \langle y \rangle^{2l} |\partial_y h_\beta|^2 dxdy\\
&\le C\delta_0^{-4}(1+\|(U, \Theta, H)(t)\|_{H^2(\mathbb{T}_x)}^{10})
+C\delta_0^{-4}(\|(u, \theta, h)(t)\|_{\mathcal{H}^m_l}^{12}
+\|(u_\beta, h_\beta)(t)\|_{L^2_l(\Omega)}^4)\\
&\quad
+\int_\Omega  R_1^\beta \cdot \langle y \rangle^{2l}u_\beta dxdy
+\int_\Omega  R_3^\beta \cdot \langle y \rangle^{2l} h_\beta dxdy.
\end{aligned}
\end{equation}
Multiplying equation \eqref{eq12}$_3$ by $\langle y \rangle^{2l} \theta_\beta$,
integrating over $\Omega$ and integrating by part, we find
\begin{equation}\label{3415}
\begin{aligned}
&\frac{c_v}{2}\frac{d}{dt}\int_{\Omega} \langle y \rangle^{2l}|\theta_\beta|^2dxdy
+\frac{\kappa}{2}\int_{\Omega} \langle y \rangle^{2l} |\partial_y \theta_\beta|^2dxdy\\
&=c_v l \int_{\Omega} (v-U_x \phi)\langle y \rangle^{2l-1}|\theta_\beta|^2dxdy
-2\kappa l \int_{\Omega} \langle y \rangle^{2l-1}\partial_y \theta_\beta \cdot
\theta_\beta dxdy\\
&\quad -2\kappa l\int_{\Omega} (\partial_y \eta_2 \partial_\tau^\beta \psi
+\eta_2 \partial_\tau^\beta h)\cdot \langle y \rangle^{2l-1}\theta_\beta dxdy
-c_v \nu \int_{\Omega} \eta_2 \partial_y h_\beta \cdot
\langle y \rangle^{2l} \theta_\beta dxdy\\
&\quad-\kappa \int_{\Omega} (\partial_y \eta_2 \partial_\tau^\beta \psi
+\eta_2 \partial_\tau^\beta h)\cdot\langle y \rangle^{2l} \theta_\beta dxdy
+\int_{\Omega} R_2^\beta \cdot \langle y \rangle^{2l} \theta^\beta dxdy.
\end{aligned}
\end{equation}
Similar to the estimates \eqref{343} and \eqref{345} , we can obtain directly
\begin{equation}\label{3416}
\begin{aligned}
&\left|c_v l \int (v-U_x \phi)\langle y \rangle^{2l-1}|\theta_\beta|^2dxdy\right|
\le C\|U_x\|_{H^1(\mathbb{T}_x)}^2
     +C(\|u_x\|_{H^2(\Omega)}^2+\|\theta_\beta\|_{L^2_l(\Omega)}^4),\\
&\left|c_v \nu \int \eta_2 \partial_y h_\beta \cdot \langle y \rangle^{2l}
\theta_\beta dxdy\right|
\le  \frac{\nu}{2} \|\partial_y h_\beta\|_{L^2_l(\Omega)}^2
\!+\!C\delta_0^{-4}(\|(U, H)\|_{H^1(\mathbb{T}_x)}^4
\!+\!\|(u, h)\|_{\mathcal{H}^m_l}^4
\!+\!\|\theta_\beta\|_{L^2_l(\Omega)}^4).
\end{aligned}
\end{equation}
With the help of Cauchy inequality, one arrives at immediately
\begin{equation}\label{3417}
\left|2\kappa l \int \langle y \rangle^{2l-1}\partial_y \theta_\beta \cdot
\theta_\beta dxdy\right|
\le \frac{\kappa}{2} \|\partial_y \theta_\beta\|_{L^2_l(\Omega)}^2
+C\|\theta_\beta\|_{L^2_l(\Omega)}^2.
\end{equation}
On the other hand, we apply the H\"{o}lder inequality
and estimate \eqref{estimate4} to get that
\begin{equation}\label{3418}
\begin{aligned}
&\left|-2\kappa l\int (\partial_y \eta_2 \partial_\tau^\beta \psi
+\eta_2 \partial_\tau^\beta h)\cdot \langle y \rangle^{2l-1}\theta_\beta dxdy\right|\\
&\le C\|\langle y\rangle^l \partial_y \eta_2\|_{L^\infty(\Omega)}
     \|\langle y\rangle^{-1}\partial_\tau^\beta \psi\|_{L^2(\Omega)}
     \|\langle y\rangle^{l}\theta_\beta\|_{L^2(\Omega)}\\
&\quad +C\|\eta_2\|_{L^\infty(\Omega)}\|\partial_\tau^\beta h\|_{L^2_l(\Omega)}
       \|\theta_\beta\|_{L^2_l(\Omega)}\\
&\le C\delta_0^{-2}(\|(U, \Theta, h)\|_{L^\infty(\mathbb{T}_x)}
+\|(u, \theta, h)\|_{\mathcal{H}^4_l})^2
\|\partial_\tau^\beta h\|_{L^2(\Omega)}
\|\theta_\beta\|_{L^2_l(\Omega)}\\
&\quad +C\delta_0^{-1}(\|(U, \Theta, h)\|_{L^\infty(\mathbb{T}_x)}
+\|(u, \theta, h)\|_{\mathcal{H}^3_l})
\|\partial_\tau^\beta h\|_{L^2(\Omega)}
\|\theta_\beta\|_{L^2_l(\Omega)}\\
&\le C\delta_0^{-2}(1+\|(U, \Theta, H)\|_{H^1(\mathbb{T}_x)}^8)
+C\delta_0^{-2}(\|(u, \theta, h)\|_{\mathcal{H}^m_l}^8
+\|\theta_\beta\|_{L^2_l(\Omega)}^2),
\end{aligned}
\end{equation}
and
\begin{equation}\label{3419}
\begin{aligned}
&\left|-\kappa \int (\partial_y \eta_2 \partial_\tau^\beta \psi
+\eta_2 \partial_\tau^\beta h)\cdot \langle y \rangle^{2l} \theta_\beta dxdy\right|\\
&\le \|\langle y\rangle^{l+1}\partial_y \eta_2\|_{L^\infty(\Omega)}
     \|\langle y\rangle^{-1}\partial_\tau^\beta \psi\|_{L^2(\Omega)}
     \|\theta_\beta\|_{L^2_l(\Omega)}\\
&\quad +\|\eta_2\|_{L^\infty(\Omega)}
       \|\partial_\tau^\beta h\|_{L^2_l(\Omega)}
       \|\theta_\beta\|_{L^2_l(\Omega)}\\
&\le C\delta_0^{-2}(\|(U,\Theta, H)\|_{L^\infty(\mathbb{T}_x)}
      +\|(u, \theta, h)\|_{\mathcal{H}^4_l})^2
      \|\partial_\tau^\beta h\|_{L^2(\Omega)}
      \|\theta_\beta\|_{L^2_l(\Omega)}\\
&\quad +C\delta_0^{-1}(\|(U,\Theta, H)\|_{L^\infty(\mathbb{T}_x)}
      +\|(u, \theta, h)\|_{\mathcal{H}^3_l})
      \|\partial_\tau^\beta h\|_{L^2(\Omega)}
      \|\theta_\beta\|_{L^2_l(\Omega)}\\
&\le C\delta_0^{-2}(1+\|(U, \Theta, H)\|_{H^1(\mathbb{T}_x)}^8)
     +C\delta_0^{-2}(\|(u, \theta, h)\|_{\mathcal{H}_l^m}^8
     +\|\theta_\beta\|_{L^2_l(\Omega)}^2).
\end{aligned}
\end{equation}
Substituting the estimates \eqref{3416}-\eqref{3419} into \eqref{3415},
we find immediately
\begin{equation}\label{3420}
\begin{aligned}
&\frac{d}{dt}\int_\Omega \langle y \rangle^{2l}|\theta_\beta|^2dxdy
+\kappa\int_\Omega \langle y \rangle^{2l} |\partial_y \theta_\beta|^2dxdy\\
&\le C\delta_0^{-4}(1+\|(U, \Theta, H)\|_{H^1(\mathbb{T}_x)}^8)
+C\delta_0^{-4}(\|(u, \theta, h)\|_{\mathcal{H}^m_l}^8
+\|\theta_\beta\|_{L^2_l(\Omega)}^4)\\
&\quad +\frac{\nu}{2}\|\partial_y h_\beta\|_{L^2_l(\Omega)}^2
+\int_\Omega R_2^\beta \cdot \langle y \rangle^{2l} \theta^\beta dxdy.
\end{aligned}
\end{equation}
The combination of \eqref{3414} and \eqref{3420} yields directly
\begin{equation}\label{3421}
\begin{aligned}
&\frac{d}{dt}\int \langle y \rangle^{2l}
(|u_\beta|^2+|\theta_\beta|^2+|h_\beta|^2) dxdy
+ \int \langle y \rangle^{2l}(\mu|\partial_y u_\beta|^2
+\kappa |\partial_y \theta_\beta|^2
+\nu|\partial_y h_\beta|^2)dxdy\\
&\le C\delta_0^{-4}(1+\|(U, \Theta, H)(t)\|_{H^2(\mathbb{T}_x)}^{10})
+C\delta_0^{-4}(\|(u, \theta, h)(t)\|_{\mathcal{H}^m_l}^{12}
+\|(u_\beta, \theta_\beta, h_\beta)(t)\|_{L^2_l(\Omega)}^4)\\
&\quad
+\int_\Omega R_1^\beta \cdot \langle y \rangle^{2l}u_\beta dxdy
+\int_\Omega R_2^\beta \cdot \langle y \rangle^{2l} \theta^\beta dxdy
+\int_\Omega R_3^\beta \cdot \langle y \rangle^{2l} h_\beta dxdy.
\end{aligned}
\end{equation}
\textbf{ We claim the following estimate:}
\begin{equation}\label{claim3}
\begin{aligned}
&\int_\Omega R_1^\beta \cdot \langle y \rangle^{2l}u_\beta dxdy
+\int_\Omega R_2^\beta \cdot \langle y \rangle^{2l} \theta^\beta dxdy
+\int_\Omega R_3^\beta \cdot \langle y \rangle^{2l} h_\beta dxdy\\
&\le  \frac{\mu}{2} \|\partial_y u_\beta\|_{L^2_l(\Omega)}^2
+\frac{\kappa}{2} \|\partial_y \theta_\beta\|_{L^2_l(\Omega)}^2
+\frac{\nu}{2} \|\partial_y h_\beta\|_{L^2_l(\Omega)}^2
+\|\partial_\tau^\beta r_1\|_{L^2_l(\Omega)}^2\\
&\quad+\|\eta_1 \partial_\tau^\beta r_4\|_{L^2_l(\Omega)}^2
+\|\partial_\tau^\beta r_2\|_{L^2_l(\Omega)}^2
+\|\eta_2 \partial_\tau^\beta r_4\|_{L^2_l(\Omega)}^2
+\|\partial_\tau^\beta r_3\|_{L^2_l(\Omega)}^2\\
&\quad+\|\eta_3 \partial_\tau^\beta r_4\|_{L^2_l(\Omega)}^2
+C\delta_0^{-4}
(1+\sum_{|\beta| \le m+2}\|\partial_\tau^\beta (U, \Theta, H)(t)\|_{L^2(\mathbb{T}_x)}^8)\\
&\quad +C\delta_0^{-4}(\|(u, \theta, h)(t)\|_{\mathcal{H}^m_l}^8
+\|(u_\beta, \theta_\beta, h_\beta)(t)\|_{L^2_l(\Omega)}^4).
\end{aligned}
\end{equation}
The combination of \eqref{3421} and \eqref{claim3} yields the estimate
\eqref{341} directly. Therefore, we complete the proof of Lemma \ref{lemma3.4}.
\end{proof}

\begin{proof}[\textbf{Proof of \eqref{claim3}.}]
Firstly, we give the estimate for the term
$\int_\Omega R_1^\beta \cdot \langle y \rangle^{2l}u_\beta dxdy$.
By virtue of H\"{o}lder inequality, we find
\begin{equation}\label{c31}
\int_\Omega \partial_\tau^\beta r_1 \cdot \langle y \rangle^{2l}u_\beta dxdy
\le \|\partial_\tau^\beta r_1\|_{L^2_l(\Omega)}\|u_\beta\|_{L^2_l(\Omega)}
\le C\|\partial_\tau^\beta r_1\|_{L^2_l(\Omega)}^2
+C\|u_\beta\|_{L^2_l(\Omega)}^2.
\end{equation}
In view of the definition of function \eqref{function1}, one attains that
\begin{equation}\label{c32}
\|\partial_\tau^\beta(U_x \phi' u)\|_{L^2_l(\Omega)}
\le \sum_{\widetilde{\beta} \le \beta}C_{\widetilde{\beta}}^{\beta}
\|\partial_\tau^{\widetilde{\beta}}(U_x \phi')\|_{L^\infty(\Omega)}
\|\partial_\tau^{\widetilde{\beta}-\beta} u\|_{L^2_l(\Omega)}
\le C\sum_{|\beta| \le m}\|\partial_\tau^\beta U_x\|_{L^\infty(\mathbb{T}_x)}
\|u\|_{\mathcal{H}^m_l}.
\end{equation}
Similarly, it is easy to deduce that
\begin{equation}\label{c33}
\begin{aligned}
&\|\partial_\tau^\beta(H_x \phi' h)\|_{L^2_l(\Omega)}
\le C\sum_{|\beta| \le m}\|\partial_\tau^\beta H_x\|_{L^\infty(\mathbb{T}_x)}
\|h\|_{\mathcal{H}^m_l},\\
&\|\partial_\tau^\beta(U \phi^{(3)} \theta)\|_{L^2_l(\Omega)}
\le C\sum_{|\beta| \le m}\|\partial_\tau^\beta U\|_{L^\infty(\mathbb{T}_x)}
\|\theta\|_{\mathcal{H}^m_l},\\
&\|\partial_\tau^\beta(U\phi'' \theta_y)\|_{L^2_l(\Omega)}
\le C\sum_{|\beta| \le m}\|\partial_\tau^\beta U\|_{L^\infty(\mathbb{T}_x)}
\|\theta\|_{\mathcal{H}^m_l}
+\|U\|_{L^\infty(\mathbb{T}_x)}\|\partial_\tau^\beta \theta_y\|_{L^2_l(\Omega)}.
\end{aligned}
\end{equation}
The combination of estimates \eqref{c32} and \eqref{c33} gives that
\begin{equation}\label{c34a}
\begin{aligned}
\int_\Omega \partial_\tau^\beta II_1\cdot \langle y \rangle^{2l}u_\beta dxdy
&\le \frac{\kappa}{4} \|\partial_\tau^\beta \theta_y\|_{L^2_l(\Omega)}^2
+C(1+\sum_{|\beta| \le m+2}\|\partial_\tau^\beta (U, \Theta, H)\|_{L^2(\mathbb{T}_x)}^4)\\
&\quad +C(\|(u, \theta, h)\|_{\mathcal{H}^m_l}^4
+\|u_\beta\|_{L^2_l(\Omega)}^2).
\end{aligned}
\end{equation}
On the other hand, we get from the inequality \eqref{equ2}(or see \eqref{b9}) that
\begin{equation}\label{c34b}
\|\partial_\tau^\beta \theta_y\|_{L^2_l(\Omega)}^2
\le 2\|\partial_y \theta_\beta\|_{L^2_l(\Omega)}^2
+C\delta_0^{-4}\|h_\beta\|_{L^2_l(\Omega)}^2,
\end{equation}
where we have used the estimate \eqref{a342}.
Then combination of \eqref{c34a} and \eqref{c34b} yields directly
\begin{equation}\label{c34}
\begin{aligned}
\int_\Omega \partial_\tau^\beta II_1\cdot \langle y \rangle^{2l}u_\beta dxdy
&\le \frac{\kappa}{2} \|\partial_y \theta_\beta\|_{L^2_l(\Omega)}^2
+C(1+\sum_{\beta \le m+2}\|\partial_\tau^\beta (U, \Theta, H)\|_{L^2(\mathbb{T}_x)}^4)\\
&\quad +C\|(u, \theta, h)\|_{\mathcal{H}^m_l}^4
+C\delta_0^{-4}\|(u_\beta, h_\beta)\|_{L^2_l(\Omega)}^2.
\end{aligned}
\end{equation}
By the definition of communicator operator $[\cdot, \cdot]$, it is easy to deduce that
\begin{equation}\label{c35}
[\partial_\tau^\beta, (u+U\phi')\partial_x-U_x \phi \partial_y]u
=\sum_{0<\widetilde{\beta} \le \beta}~C_{\widetilde{\beta}}^\beta~
\partial_\tau^{\widetilde{\beta}}
[(u+U\phi')\partial_x-U_x \phi \partial_y]\partial_\tau^{\beta-\widetilde{\beta}}u.
\end{equation}
In view of the inequality \eqref{e3}, one arrives at directly
\begin{equation}\label{c36}
\begin{aligned}
&\|\partial_\tau^{\widetilde{\beta}}
[(u+U\phi')\partial_x-U_x \phi \partial_y]\partial_\tau^{\beta-\widetilde{\beta}}u\|_{L^2_l(\Omega)}\\
&\le \|\partial_\tau^{e_i} u\|_{\mathcal{H}_l^{m-1}}
     \|\partial_x u\|_{\mathcal{H}_l^{m-1}}
     +\|\partial^{\widetilde{\beta}}_\tau U\|_{L^\infty(\mathbb{T}_x)}
     \|\partial_\tau^{\beta-\widetilde{\beta}}\partial_x u\|_{L^2_l(\Omega)}\\
&\quad +\|\langle y\rangle^{-1}\partial_\tau^{\widetilde{\beta}}
          (U_x \phi)\|_{L^\infty(\Omega)}
        \|\langle y\rangle^{l+1}\partial_\tau^{\beta-\widetilde{\beta}}
          \partial_y u\|_{L^2(\Omega)}\\
&\le \|u\|_{\mathcal{H}^m_l}^2
     +C\sum_{|\beta| \le m+1}\|\partial_\tau^\beta U\|_{L^\infty(\mathbb{T}_x)}
     \|u\|_{\mathcal{H}^m_l}.
\end{aligned}
\end{equation}
Then, we can deduce from \eqref{c35} and \eqref{c36} that
\begin{equation}\label{c37}
\|[\partial_\tau^\beta, (u+U\phi')\partial_x-U_x \phi \partial_y]u\|_{L^2_l(\Omega)}
\le  \|u\|_{\mathcal{H}^m_l}^2
     +C\sum_{|\beta| \le m+2}\|\partial_\tau^\beta U\|_{L^2(\mathbb{T}_x)}
     \|u\|_{\mathcal{H}^m_l}.
\end{equation}
Similarly, we can also obtain that
\begin{equation}\label{c38}
\|[\partial_\tau^\beta, (h+H\phi')\partial_x-H_x \phi \partial_y]h\|_{L^2_l(\Omega)}
\le  \|h\|_{\mathcal{H}^m_l}^2
     +C\sum_{|\beta| \le m+2}\|\partial_\tau^\beta H\|_{L^2(\mathbb{T}_x)}
     \|h\|_{\mathcal{H}^m_l}.
\end{equation}
By virtue of the divergence free condition of velocity
(i.e. $\partial_x u+\partial_y v=0$), one arrives at
\begin{equation}\label{c39}
\begin{aligned}
\|[\partial_\tau^\beta, U\phi^{(2)}]v\|_{L^2_l(\Omega)}
&\le C\sum_{0<\widetilde{\beta} \le \beta}
\|\partial_\tau^{\widetilde{\beta}}(U\phi^{(2)})\|_{L^\infty(\Omega)}
\|\langle y\rangle^{-1}\partial_\tau^{\beta-\widetilde{\beta}}
   \partial_y^{-1}u_x\|_{L^2(\Omega)}\\
&\le C\sum_{|\beta| \le m+1}\|\partial_\tau^\beta U\|_{L^2(\mathbb{T}_x)}\|u\|_{\mathcal{H}^m_l}.
\end{aligned}
\end{equation}
Similarly, we obtain directly
\begin{equation}\label{c310}
\|[\partial_\tau^\beta, H\phi^{(2)}]g\|_{L^2_l(\Omega)}
\le C\sum_{|\beta| \le m+1}\|\partial_\tau^\beta H\|_{L^2(\mathbb{T}_x)}\|h\|_{\mathcal{H}^m_l}.
\end{equation}
Integrating by part and applying the homogeneous
boundary condition \eqref{eq15}$_4$ yields that
\begin{equation*}
\begin{aligned}
&\mu \int_\Omega \partial_y([\partial_\tau^\beta, (\theta+\Theta \phi'+1)\partial_y]u)
\cdot \langle y \rangle^{2l}u_\beta dxdy\\
&=-\mu \int_\Omega [\partial_\tau^\beta, (\theta+\Theta \phi'+1)\partial_y]u
\cdot \langle y \rangle^{2l}\partial_y u_\beta dxdy\\
&\quad -2 \mu l \int_\Omega [\partial_\tau^\beta, (\theta+\Theta \phi'+1)\partial_y]u
\cdot \langle y \rangle^{2l-1}u_\beta dxdy.
\end{aligned}
\end{equation*}
In view of the definition of communicator operator $[\cdot, \cdot]$,
it is easy to check that
\begin{equation*}
[\partial_\tau^\beta, (\theta+\Theta \phi'+1)\partial_y]u
=\sum_{0<\widetilde{\beta} \le \beta}C^\beta_{\widetilde{\beta}}
\partial_\tau^{\widetilde{\beta}-e_i}\partial_\tau^{e_i}\theta\cdot
\partial_\tau^{\beta-\widetilde{\beta}}\partial_y u
+\sum_{0<\widetilde{\beta}\le \beta}C^\beta_{\widetilde{\beta}}
\partial_\tau^{\widetilde{\beta}}
(\Theta \phi')\partial^{\beta-\widetilde{\beta}}_\tau \partial_y u.
\end{equation*}
Then, we apply the inequality \eqref{e3} and Cauchy inequality to get that
\begin{equation}\label{c311}
\begin{aligned}
&\left|\mu \int \partial_y([\partial_\tau^\beta, (\theta+\Theta \phi'+1)\partial_y]u)
\cdot \langle y \rangle^{2l}u_\beta dxdy\right|\\
&\le \mu(\|\theta\|_{\mathcal{H}^m_l}
+\sum_{|\beta|\le m }\|\partial_\tau^\beta \Theta\|_{L^\infty(\mathbb{T}_x)})
\|u\|_{\mathcal{H}^m_l}\|\partial_y u_\beta\|_{L^2_l(\Omega)}\\
&\quad +
\mu(\|\theta\|_{\mathcal{H}^m_l}
+\sum_{|\beta|\le m }\|\partial_\tau^\beta \Theta\|_{L^\infty(\mathbb{T}_x)})
\|u\|_{\mathcal{H}^m_l}\| u_\beta\|_{L^2_l(\Omega)}\\
&\le \frac{\mu}{2} \|\partial_y u_\beta\|_{L^2_l(\Omega)}^2
+C(1+\sum_{|\beta| \le m+1}\|\partial_\tau^\beta \Theta\|_{L^2(\mathbb{T}_x)}^4)
+C(\|(u, \theta)\|_{\mathcal{H}^m_l}^4+\|u_\beta\|_{L^2_l(\Omega)}^2).
\end{aligned}
\end{equation}
By virtue of the divergence free condition of velocity,
it is easy to deduce that for $0< \widetilde{\beta} <\beta$
\begin{equation*}
\|\partial_\tau^\beta v\cdot
\partial_\tau^{\beta-\widetilde{\beta}}\partial_y u\|_{L^2_l(\Omega)}
=\|\partial_\tau^{\widetilde{\beta}-e_i} \partial_y^{-1}\partial_\tau^{e_i+e_2}u\cdot
\partial_\tau^{\beta-\widetilde{\beta}-e_j}
\partial_\tau^{e_j+E_3} u\|_{L^2_l(\Omega)}
\le C\|u\|_{\mathcal{H}^m_l}^2,
\end{equation*}
which implies that
\begin{equation}\label{c312}
\|\sum_{0<\widetilde{\beta}<\beta}C_{\widetilde{\beta}}^\beta
\partial_\tau^\beta v\cdot \partial_\tau^{\beta-\widetilde{\beta}}
\partial_y u\|_{L^2_l(\Omega)}\le C\|u\|_{\mathcal{H}^m_l}^2.
\end{equation}
Similarly, we can find that
\begin{equation}\label{c313}
\|\sum_{0<\widetilde{\beta}<\beta}C_{\widetilde{\beta}}^\beta
\partial_\tau^\beta g\cdot \partial_\tau^{\beta-\widetilde{\beta}}
\partial_y h\|_{L^2_l(\Omega)}\le C\|h\|_{\mathcal{H}^m_l}^2.
\end{equation}
By virtue of the definition of $R_u^\beta$(see \eqref{eq6a}),
estimates \eqref{c37}-\eqref{c313} and Cauchy inequality, we find
\begin{equation}\label{c314}
\begin{aligned}
\int_\Omega R_u^\beta \cdot \langle y \rangle^{2l}u_\beta dxdy
&\le \frac{\mu}{2} \|\partial_y u_\beta\|_{L^2_l(\Omega)}^2
+C(1+\sum_{|\beta| \le m+2}\|\partial_\tau^\beta (U, \Theta, H)\|_{L^2(\mathbb{T}_x)}^4)\\
&\quad +C(\|(u, \theta, h)\|_{\mathcal{H}^m_l}^4
+\|u_\beta\|_{L^2_l(\Omega)}^2).
\end{aligned}
\end{equation}
In view of the definition of $r_4$(see \eqref{function5})
estimate \eqref{estimate4} and Cauchy inequality, it is easy to check that
\begin{equation}\label{c315}
\left|\int_\Omega \eta_1 \partial_\tau^\beta r_4\cdot
\langle y \rangle^{2l}u_\beta dxdy\right|
\le \|\eta_1 \partial_\tau^\beta r_4\|_{L^2_l(\Omega)}
\|u_\beta\|_{L^2_l(\Omega)}
\le \|\eta_1 \partial_\tau^\beta r_4\|_{L^2_l(\Omega)}^2
+\|u_\beta\|_{L^2_l(\Omega)}^2.
\end{equation}
Similarly, we can find  directly
\begin{equation}\label{c316}
\begin{aligned}
\|\eta_1 \partial_\tau^\beta(H_x \phi u)\|_{L^2_l(\Omega)}^2
&\le C\sum_{\widetilde{\beta}\le \beta}
\|\langle y\rangle \eta_1\|_{L^\infty(\Omega)}^2
\|\langle y\rangle^{-1}\partial_\tau^{\widetilde{\beta}}H_x \phi\|_{L^\infty(\Omega)}^2
\|\partial_\tau^{\beta-\widetilde{\beta}}u\|_{L^2_l(\Omega)}^2\\
&\le C\delta_0^{-2}(1+\sum_{|\beta| \le m+1}
\|\partial_\tau^\beta(U, \Theta, H)\|_{L^2(\mathbb{T}_x)}^8)
+C\delta_0^{-2}\|(u, \theta, h)\|_{\mathcal{H}^m_l}^8.
\end{aligned}
\end{equation}
With the help of estimates \eqref{estimate4}, \eqref{e7} and
divergence free condition of velocity, one arrives at
\begin{equation}\label{c317}
\begin{aligned}
\|\eta_1[\partial_\tau^\beta, H\phi']v\|_{L^2_l(\Omega)}^2
&\le C\sum_{0<\widetilde{\beta}\le \beta}
\|\langle y\rangle^{l+1}\eta_1\|_{L^\infty(\Omega)}^2
\|\partial_\tau^{\widetilde{\beta}}(H\phi')\|_{L^\infty(\Omega)}^2
\|\langle y\rangle^{-1}\partial_\tau^{\beta-\widetilde{\beta}}
\partial_y^{-1}u_x\|_{L^2(\Omega)}^2\\
&\le C\delta_0^{-2}
(1+\sum_{|\beta| \le m+1}\|\partial_\tau^\beta(U, \Theta, H)\|_{L^2(\mathbb{T}_x)}^8)
+C\delta_0^{-2}\|(u, \theta, h)\|_{\mathcal{H}^m_l}^8.
\end{aligned}
\end{equation}
Obviously, it is easy to check that
\begin{equation*}
\begin{aligned}
&\eta_1[\partial_\tau^\beta,
(u+U\phi')\partial_x-U_x \phi \partial_y]\psi\\
&=-\sum_{0<\widetilde{\beta} \le \beta}C_{\widetilde{\beta}}^\beta~ \eta_1~
\partial_\tau^{\widetilde{\beta}}u
\cdot \partial_\tau^{\beta-\widetilde{\beta}}\partial_y^{-1}h_x
-\sum_{0<\widetilde{\beta} \le \beta}C_{\widetilde{\beta}}^{\beta}
~\eta_1~ \partial_\tau^{\widetilde{\beta}}(U\phi')
\partial_\tau^{\beta-\widetilde{\beta}}\partial_y^{-1}h_x\\
&\quad +\sum_{0<\widetilde{\beta} \le \beta}C_{\widetilde{\beta}}^\beta
~\eta_1~ \partial_\tau^{\widetilde{\beta}}(U_x \phi)
\partial_\tau^{\beta-\widetilde{\beta}}h.
\end{aligned}
\end{equation*}
In view of the estimate \eqref{estimate4} and inequality \eqref{e8}, one finds
\begin{equation}\label{c318}
\begin{aligned}
&\|\eta_1 \partial_\tau^{\widetilde{\beta}}u~
\partial_\tau^{\beta-\widetilde{\beta}}\partial_y^{-1}h_x\|_{L^2_l(\Omega)}
\le \|\langle y\rangle \eta_1\|_{L^\infty}
\|\partial_\tau^{\widetilde{\beta}-e_i}\partial_\tau^{e_i}u~
\partial_\tau^{\beta-\widetilde{\beta}}\partial_y^{-1}h_x\|_{L^2_{l-1}(\Omega)}\\
&\le \delta_0^{-1}(\|(U, \Theta, H)\|_{L^\infty(\mathbb{T}_x)}
+\|(u, \theta, h)\|_{\mathcal{H}^3_0})
\|(u, h)\|_{\mathcal{H}^m_l}^2
\end{aligned}
\end{equation}
Similarly, we obtain that
\begin{equation}\label{c319}
\begin{aligned}
&\|\eta_1 \partial_\tau^{\widetilde{\beta}}(U\phi')
\partial_\tau^{\beta-\widetilde{\beta}}\partial_y^{-1}h_x\|_{L^2_l(\Omega)}\\
&\le C\delta_0^{-1}
(\|(U, \Theta, H)\|_{L^\infty(\mathbb{T}_x)}+\|(u, \theta, h)\|_{\mathcal{H}_l^3})
\|\partial_\tau^{\widetilde{\beta}}U\|_{L^\infty(\mathbb{T}_x)}
\|\partial_\tau^{\beta-\widetilde{\beta}} h_x\|_{L^2(\Omega)},
\end{aligned}
\end{equation}
and
\begin{equation}\label{c320}
\begin{aligned}
&\|\eta_1 \partial_\tau^{\widetilde{\beta}}(U_x \phi)
\partial_\tau^{\beta-\widetilde{\beta}}h\|_{L^2_l(\Omega)}\\
&\le C\delta_0^{-1}
(\|(U, \Theta, H)\|_{L^\infty(\mathbb{T}_x)}+\|(u, \theta, h)\|_{\mathcal{H}_l^3})
\|\partial_\tau^{\widetilde{\beta}}U_x\|_{L^\infty(\mathbb{T}_x)}
\|\partial_\tau^{\beta-\widetilde{\beta}} h\|_{L^2(\Omega)}.
\end{aligned}
\end{equation}
The combination of \eqref{c318}-\eqref{c320} yields directly
\begin{equation}\label{c321}
\|\eta_1~[\partial_\tau^\beta,
(u+U\phi')\partial_x-U_x \phi \partial_y]\psi\|_{L_l^2(\Omega)}
\le C\delta_0^{-1}(1+\|(U, \Theta, H)\|_{H^1(\mathbb{T}_x)}^4)
+C\delta_0^{-1}\|(u, \theta, h)\|_{\mathcal{H}_l^m}^4.
\end{equation}
By virtue of the divergence free condition of velocity, it is easy to check that
\begin{equation*}
\sum_{0<\widetilde{\beta} < \beta} \partial_\tau^{\widetilde{\beta}} v\cdot
\partial_\tau^{\beta-\widetilde{\beta}} \partial_y \psi
=-\sum_{0<\widetilde{\beta} < \beta} \partial_\tau^{\widetilde{\beta}-e_i}
\partial_y^{-1}(\partial_\tau^{e_i}u_x)
\partial_\tau^{\beta-\widetilde{\beta}-e_i}(\partial_\tau^{e_i}h).
\end{equation*}
Then, the application of \eqref{estimate4} and \eqref{e7} yields directly
\begin{equation*}
\begin{aligned}
&\|\eta_1 \partial_\tau^{\widetilde{\beta}} v\cdot
\partial_\tau^{\beta-\widetilde{\beta}}\partial_y \psi\|_{L^2_l(\Omega)}\\
&\le \|\langle y\rangle^{l+1}\eta_1\|_{L^\infty(\Omega)}
\|\partial_\tau^{\widetilde{\beta}-e_i}
\partial_y^{-1}(\partial_\tau^{e_i}u_x)
\partial_\tau^{\beta-\widetilde{\beta}-e_i}(\partial_\tau^{e_i}h)\|_{L^2_{-1}(\Omega)}\\
&\le C\delta_0^{-1}(\|(U, \Theta, H)\|_{H^1(\mathbb{T}_x)}
+\|(u, \theta, h)\|_{\mathcal{H}^3_l})\|(u, h)\|_{\mathcal{H}^m_0}^2,
\end{aligned}
\end{equation*}
which implies that
\begin{equation}\label{c322}
\|\sum_{0<\widetilde{\beta} < \beta}\eta_1 \partial_\tau^{\widetilde{\beta}} v\cdot
\partial_\tau^{\beta-\widetilde{\beta}}\partial_y \psi\|_{L^2_l(\Omega)}
\le C\delta_0^{-1}(\|(U, \Theta, H)\|_{H^1(\mathbb{T}_x)}
+\|(u, \theta, h)\|_{\mathcal{H}^3_l})\|(u, h)\|_{\mathcal{H}^m_0}^2.
\end{equation}
Hence, the combination of \eqref{c316}, \eqref{c317}, \eqref{c321} and \eqref{c322}
gives that
\begin{equation}\label{c323}
\begin{aligned}
&\left|\int_\Omega \eta_1 R_4^\beta\cdot
\langle y \rangle^{2l}u_\beta dxdy\right|\\
&\le C\delta_0^{-2}(1+\sum_{|\beta| \le m+1}
\|\partial_\tau^\beta(U, \Theta, H)\|_{L^2(\mathbb{T}_x)}^8)
+C\delta_0^{-2}(\|(u, \theta, h)\|_{\mathcal{H}^m_l}^8
+\|u_\beta\|_{L^2_l(\Omega)}^2).
\end{aligned}
\end{equation}
One applies the estimate \eqref{estimate4} and Sobolev inequality to get directly
\begin{equation}\label{c324}
\begin{aligned}
&\left|\nu \int_\Omega \eta_1 \eta_3 \partial_\tau^\beta h\cdot
\langle y \rangle^{2l}u_\beta dxdy\right|\\
&\le C\|\eta_1\|_{L^\infty(\Omega)}\|\eta_3\|_{L^\infty(\Omega)}
\|\partial_\tau^\beta h\|_{L^2_l(\Omega)}\|u_\beta\|_{L^2_l(\Omega)}\\
&\le C\delta_0^{-2}(1+\|(U, \Theta, H)\|_{H^1(\mathbb{T}_x)}^4)
+C\delta_0^{-2}(\|(u, \theta, h)\|_{\mathcal{H}^m_l}^4
+\|u_\beta\|_{L^2_l(\Omega)}^4).
\end{aligned}
\end{equation}
The application of \eqref{estimate4}, \eqref{e7},  H\"{o}lder inequality
and divergence free condition of magnetic field yields
\begin{equation}\label{c325}
\begin{aligned}
&\left|\int_\Omega \eta_3 (g-H_x \phi)\partial_\tau^\beta h\cdot
\langle y \rangle^{2l}u_\beta dxdy\right|\\
&\le \|\langle y\rangle^{l+1}\eta_3\|_{L^\infty(\Omega)}
     (\|\langle y\rangle^{-1}\partial_y^{-1}h_x\|_{L^\infty(\Omega)}
     +\|\langle y\rangle^{-1} H_x \phi\|_{L^\infty(\Omega)})
     \|\partial_\tau^\beta h\|_{L^2(\Omega)}
     \|u_\beta\|_{L^2(\Omega)}\\
&\le C\delta_0^{-1}(1+\|(U, \Theta, H)\|_{H^1(\mathbb{T}_x)}^8)
     +C\delta_0^{-1}(\|(u, \theta, h)\|_{\mathcal{H}^m_l}^8
     +\|u_\beta\|_{L^2_l(\Omega)}^4).
\end{aligned}
\end{equation}
By virtue of H\"{o}lder inequality, \eqref{e7} and estimate \eqref{estimate4}$_3$,
we get that
\begin{equation}\label{c326}
\begin{aligned}
&\left|\int_\Omega \zeta_1 (\partial_\tau^\beta \psi)
\cdot \langle y\rangle^{2l}u_\beta dxdy\right|\\
&\le \|\langle y\rangle^{l+1}\zeta_1\|_{L^\infty(\Omega)}
\|\langle y\rangle^{-1}\partial_\tau^\beta \partial_y^{-1}h\|_{L^2(\Omega)}
\|u_\beta\|_{L^2_l(\Omega)}\\
&\le C\delta_0^{-3}(1+\sum_{|\beta|\le 1}\|\partial_\tau^\beta(U, \Theta, H)\|_{H^1(\mathbb{T}_x)}^6)
+C\delta_0^{-3}(\|(u, \theta, h)\|_{\mathcal{H}^m_l}^6
+\|u_\beta\|_{L^2_l(\Omega)}^4).
\end{aligned}
\end{equation}
Then, the combination of \eqref{c31}, \eqref{c34}, \eqref{c314}, \eqref{c315}
and \eqref{c324}-\eqref{c326} yields directly
\begin{equation}\label{c327}
\begin{aligned}
\left|\int_\Omega R_1^\beta \cdot \langle y \rangle^{2l}u_\beta dxdy\right|
&\le  \frac{\mu}{4} \|\partial_y u_\beta\|_{L^2_l(\Omega)}^2
+\frac{\kappa}{2} \|\partial_y \theta_\beta\|_{L^2_l(\Omega)}^2
+\|\partial_\tau^\beta r_1\|_{L^2_l(\Omega)}^2\\
&\quad
+C\delta_0^{-4}(\|(u, \theta, h)\|_{\mathcal{H}^m_l}^8
+\|(u_\beta, h_\beta)\|_{L^2_l(\Omega)}^4)\\
&\quad +C\delta_0^{-4}
(1+\sum_{|\beta| \le m+2}\|\partial_\tau^\beta (U, \Theta, H)\|_{L^2(\mathbb{T}_x)}^8).
\end{aligned}
\end{equation}
Similarly, we can also obtain that
\begin{equation}\label{c328}
\begin{aligned}
\int_\Omega R_2^\beta \cdot \langle y \rangle^{2l} \theta_\beta dxdy
&\le  \frac{\mu}{4}\|\partial_y u_\beta\|_{L^2_l(\Omega)}^2
+\frac{\nu}{2}\|\partial_y h_\beta\|_{L^2_l(\Omega)}^2
+\|\partial_\tau^\beta r_2\|_{L^2_l(\Omega)}^2\\
&\quad+\|\eta_2 \partial_\tau^\beta r_4\|_{L^2_l(\Omega)}^2
+C\delta_0^{-4}(\|(u, \theta, h)\|_{\mathcal{H}^m_l}^8
+\|(\theta_\beta, h_\beta)\|_{L^2_l(\Omega)}^4)\\
&\quad +C\delta_0^{-4}
(1+\sum_{|\beta| \le m+2}\|\partial_\tau^\beta (U, \Theta, H)\|_{L^2(\mathbb{T}_x)}^8),
\end{aligned}
\end{equation}
and
\begin{equation}\label{c329}
\begin{aligned}
\int_\Omega R_3^\beta \cdot \langle y \rangle^{2l} h_\beta dxdy
&\le \|\partial_\tau^\beta r_3\|_{L^2_l(\Omega)}^2
+\|\eta_3 \partial_\tau^\beta r_4\|_{L^2_l(\Omega)}^2
+C\delta_0^{-3}\|(u, \theta, h)\|_{\mathcal{H}^m_l}^6\\
&\quad
+C\delta_0^{-3}\|h_\beta\|_{L^2_l(\Omega)}^6+C\delta_0^{-3}
(1+\sum_{|\beta| \le m+2}\|\partial_\tau^\beta (U, \Theta, H)\|_{L^2(\mathbb{T}_x)}^6).
\end{aligned}
\end{equation}
Therefore, the combination of \eqref{c327}-\eqref{c329} completes the proof of \eqref{claim3}.
\end{proof}

\subsection{Closeness of the a priori estimates}
\quad
In this subsection, we will give the proof for the Proposition \ref{pro1}
by collecting all the estimates obtained in this section.
Indeed, the combination of estimates \eqref{equ1}, \eqref{equ2} and
\eqref{a342} yields immediately
\begin{equation}\label{a343}
\begin{aligned}
\|(u, \theta, h)(t)\|_{\mathcal{H}^m_l}^2
&=\sum_{\tiny\substack{|\alpha|\le m \\ |\beta|\le m-1}}
\|D^\alpha(u, \theta, h)(t)\|_{L^2_{l+k}(\Omega)}^2
+\sum_{|\beta|=m}\|\partial_\tau^{\beta}(u, \theta, h)(t)\|_{L^2_{l}(\Omega)}^2\\
&\le \sum_{\tiny\substack{|\alpha|\le m \\ |\beta|\le m-1}}
\|D^\alpha(u, \theta, h)(t)\|_{L^2_{l+k}(\Omega)}^2
+36\delta_0^{-4}\sum_{|\beta|=m}\|(u_\beta, \theta_\beta, h_\beta)(t)\|_{L^2_l(\Omega)}^2,
\end{aligned}
\end{equation}
and
\begin{equation}\label{a344}
\begin{aligned}
\|\partial_y(u, \theta, h)(t)\|_{\mathcal{H}^m_l}^2
&=\sum_{\tiny\substack{|\alpha|\le m \\ |\beta|\le m-1}}
\|D^\alpha \partial_y(u, \theta, h)(t)\|_{L^2_{l+k}(\Omega)}^2
+\sum_{|\beta|=m}\|\partial_\tau^{\beta}
\partial_y(u, \theta, h)(t)\|_{L^2_{l}(\Omega)}^2\\
&\le \sum_{\tiny\substack{|\alpha|\le m \\ |\beta|\le m-1}}
\|D^\alpha \partial_y(u, \theta, h)(t)\|_{L^2_{l+k}(\Omega)}^2
+2\sum_{|\beta|=m}\|\partial_y(u_\beta, \theta_\beta, h_\beta)(t)\|_{L^2_l(\Omega)}^2\\
&\quad +72\delta_0^{-4}\sum_{|\beta|=m}\| h_\beta(t)\|_{L^2_l(\Omega)}^2.
\end{aligned}
\end{equation}

Then, we can obtain the following proposition that will play an important role
in giving the proof for the Proposition \ref{pro1}.

\begin{proposition}\label{pro2}
Under the assumptions of Proposition \ref{pro1}, there exists a constant
$C>0$, depending only on $m, M_0$ and $\phi$, such that
\begin{equation}\label{351}
\underset{0\le s \le t}{\sup}\|(u, \theta, h)(s)\|_{\mathcal{H}^m_l}
\le \left\{F(0)+\int_0^t G(\tau)d\tau\right\}^{\frac{1}{2}}
\left\{1-C\delta_0^{-8}~t~[F(0)+\int_0^t G(\tau)d\tau]^5\right\}^{-\frac{1}{10}}.
\end{equation}
for small time $t$, where the quantities $F(0)$ and $G(t)$ are defined as follows
\begin{equation}\label{functionf}
F(0)\triangleq
\sum_{\tiny\substack{|\alpha|\le m \\ |\beta|\le m-1}}
\|D^\alpha(u, \theta, h)(0)\|_{L^2_{l+k}(\Omega)}^2
+36\delta_0^{-4}
\sum_{|\beta|=m}\|(u_{\beta 0}, \theta_{\beta 0}, h_{\beta 0})\|_{L^2_{l}(\Omega)}^2,
\end{equation}
\begin{equation}\label{functiong}
\begin{aligned}
G(t)
&\triangleq C\delta_0^{-8}(\sum_{|\beta|\le m+2}
\|\partial_\tau^\beta(U, \Theta, H, P)(t)\|_{L^2(\mathbb{T}_x)}^2+1)^5
+C\sum_{\tiny\substack{|\alpha|\le m \\ |\beta|\le m-1}}
\|D^\alpha(r_1, r_2, r_3)(t)\|_{L^2_{l+k}(\Omega)}^2\\
&\quad +C\delta_0^{-4}\sum_{|\beta|=m}
\left\{\|\partial_\tau^\beta(r_1, r_2, r_3)(t)\|_{L^2_l(\Omega)}^2
+4\delta_0^{-4}\|\partial_\tau^\beta r_4(t)\|_{L^2_{-1}(\Omega)}^2\right\}.
\end{aligned}
\end{equation}
Also, we have that for $i=1,2$,
\begin{equation}\label{352}
\begin{aligned}
&\|\langle y\rangle^{l+1}\partial_y^i (u, \theta, h)(t, x, y)\|_{L^\infty(\Omega)}\\
&\le \|\langle y\rangle^{l+1}\partial_y^i (u_0, \theta_0, h_0)(x, y)\|_{L^\infty(\Omega)}\\
&\quad +C t  \left\{F(0)+\int_0^t G(\tau)d\tau\right\}^{\frac{1}{2}}
\left\{1-C\delta_0^{-8}~t~[F(0)+\int_0^t G(\tau)d\tau]^5\right\}^{-\frac{1}{10}},
\end{aligned}
\end{equation}
and
\begin{equation}\label{353}
h(t, x, y)
\ge h_0(x, y)-C t  \left\{F(0)+\int_0^t G(\tau)d\tau\right\}^{\frac{1}{2}}
\left\{1-C\delta_0^{-8}~t~[F(0)+\int_0^t G(\tau)d\tau]^5\right\}^{-\frac{1}{10}}.
\end{equation}
\end{proposition}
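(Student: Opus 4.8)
The plan is to fuse the two differential inequalities \eqref{331} and \eqref{341} into a single scalar Gronwall/Bihari inequality for the energy functional $F(t)$ given by \eqref{functionf} with the running time $t$ in place of the initial time, i.e.
\[
F(t)\triangleq \sum_{\substack{|\alpha|\le m\\ |\beta|\le m-1}}\|D^\alpha(u,\theta,h)(t)\|_{L^2_{l+k}(\Omega)}^2+36\delta_0^{-4}\sum_{|\beta|=m}\|(u_\beta,\theta_\beta,h_\beta)(t)\|_{L^2_l(\Omega)}^2 .
\]
By \eqref{a343} together with \eqref{equ1} and the bound $M(t)\le 6\delta_0^{-2}$ from \eqref{a342}, this $F(t)$ dominates $\|(u,\theta,h)(t)\|_{\mathcal{H}^m_l}^2$, and up to the fixed positive constant $c_v$ it is comparable to the $\sqrt{c_v}$/$c_v$-weighted functional whose time derivative is furnished by \eqref{331} and \eqref{341}. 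Concretely I would multiply \eqref{341} by $36\delta_0^{-4}$ and add it to \eqref{331}: the time-derivative terms assemble into $\tfrac{d}{dt}F$, while the dissipations assemble into $c_0$ times exactly the two groups of $\partial_y$-norms that appear on the right of \eqref{a344}.

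First I would dispose of the only term still threatening closure, the factor $\delta_1\|\partial_y(u,\theta,h)\|_{\mathcal{H}^m_0}^2$ carried by \eqref{331}. Applying \eqref{a344} with $l=0$ bounds this quantity by the same two dissipation groups (in the weaker weights $L^2_k$ and $L^2_0$, which are controlled by the genuine $L^2_{l+k}$ and $L^2_l$ dissipations since $l\ge0$) plus a non-dissipative remainder $72\delta_1\delta_0^{-4}\sum_{|\beta|=m}\|h_\beta\|_{L^2_0}^2$. Fixing $\delta_1$ once and for all so that $\delta_1\le\tfrac12 c_0$ lets the dissipation on the left absorb the first two groups, while the remainder is $\le 2\delta_1 F$ and is swept into the polynomial right-hand side. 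After this absorption every surviving right-hand term is either a source (the $D^\alpha(r_1,r_2,r_3)$, the $\partial_\tau^\beta r_i$, the $\eta_i\partial_\tau^\beta r_4$ and the outer-flow norms, which I collect into $G(t)$ exactly as in \eqref{functiong}) or a power of $F$; using $\|(u,\theta,h)\|_{\mathcal{H}^m_l}^2\le F$ the highest such power is $C\delta_0^{-8}\|(u,\theta,h)\|_{\mathcal{H}^m_l}^{12}\le C\delta_0^{-8}F^6$, and the lower powers $F,F^4$ are dominated through Young's inequality by $\tfrac{1}{2}C\delta_0^{-8}F^6$ plus constants that join $G$.

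This produces the scalar inequality $\tfrac{d}{dt}F(t)\le C\delta_0^{-8}F(t)^6+G(t)$. Integrating and writing $Z(t)\triangleq F(0)+\int_0^t G(\tau)\,d\tau$ gives the Bihari-type inequality $F(t)\le Z(t)+C\delta_0^{-8}\int_0^t F(s)^6\,ds$; freezing the nondecreasing $Z$ at its endpoint value and comparing with the explicit solution of $b'=C\delta_0^{-8}b^6$, $b(0)=Z(t)$, yields $F(t)\le Z(t)\bigl(1-C\delta_0^{-8}t\,Z(t)^5\bigr)^{-1/5}$ for $t$ small enough that the denominator stays positive. Taking square roots, using $\|(u,\theta,h)(s)\|_{\mathcal{H}^m_l}^2\le F(s)$, and exploiting the monotonicity in $t$ of the right-hand side to pass to the supremum over $s\le t$, delivers precisely \eqref{351}, the exponent $-\tfrac1{10}$ being $\tfrac12\cdot(-\tfrac15)$.

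The pointwise bounds \eqref{352} and \eqref{353} then follow by integrating in time: writing $\langle y\rangle^{l+1}\partial_y^i(u,\theta,h)(t)=\langle y\rangle^{l+1}\partial_y^i(u_0,\theta_0,h_0)+\int_0^t\langle y\rangle^{l+1}\partial_y^i\partial_s(u,\theta,h)\,ds$ and likewise $h(t)=h_0+\int_0^t\partial_s h\,ds$, I would estimate the integrands by the weighted Sobolev embedding $\|\langle y\rangle^{l+1}\partial_y^i\partial_s f\|_{L^\infty(\Omega)}\le C\|f\|_{\mathcal{H}^m_l}$, valid for $i\le 2$ and $m\ge5$ since $\partial_y^i\partial_s=D^{(1,0,i)}$ has order $\le 3$ and two further derivatives suffice for the embedding, the weight being compatible because $i+l\ge l+1$ for $i\ge1$. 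This bounds the time integral by $Ct\sup_{0\le s\le t}\|(u,\theta,h)(s)\|_{\mathcal{H}^m_l}$, into which \eqref{351} is inserted. The main obstacle is the absorption step of the second paragraph: the dissipation produced by \eqref{331}--\eqref{341} controls $\partial_y$-norms only through \eqref{a344}, whose non-dissipative remainder $\delta_0^{-4}\sum\|h_\beta\|^2$ must be tracked and re-absorbed into $F$, and one must check that a single choice of $\delta_1$ simultaneously closes the estimate on both the $|\beta|\le m-1$ block of \eqref{331} and the $36\delta_0^{-4}$-weighted $|\beta|=m$ block of \eqref{341}.
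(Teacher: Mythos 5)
Your proposal is correct and follows essentially the same route as the paper: multiply \eqref{341} by $36\delta_0^{-4}$, add to \eqref{331}, absorb the $\delta_1\|\partial_y(u,\theta,h)\|_{\mathcal{H}^m_0}^2$ term into the dissipation via \eqref{a343}--\eqref{a344} after fixing $\delta_1$ small, reduce to the scalar inequality $F'\le C\delta_0^{-8}F^6+G$, and conclude by ODE comparison, then obtain \eqref{352}--\eqref{353} by time integration and the Sobolev embedding into $\mathcal{H}^5_l$. Your explicit tracking of the non-dissipative remainder $\delta_0^{-4}\sum\|h_\beta\|^2$ and of the sixth power of $F$ is in fact cleaner than the paper's display \eqref{355}, which suppresses that power.
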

\begin{proof}
Indeed, multiplying \eqref{341} by $36\delta_0^{-4}$ and adding with
\eqref{331}, then we find
\begin{equation}\label{354}
\begin{aligned}
&\frac{d}{dt}(\sum_{\tiny\substack{|\alpha|\le m \\ |\beta|\le m-1}}
\|D^\alpha(u, \theta, h)(t)\|_{L^2_{l+k}(\Omega)}^2
+36\delta_0^{-4}
\sum_{|\beta|=m}\|(u_\beta, \theta_\beta, h_\beta)(t)\|_{L^2_{l}(\Omega)}^2)\\
&\quad +
\sum_{\tiny\substack{|\alpha|\le m \\ |\beta|\le m-1}}
\|D^\alpha\partial_y(u, \theta, h)(t)\|_{L^2_{l+k}(\Omega)}^2
+36\delta_0^{-4}
\sum_{|\beta|=m}\|\partial_y(u_\beta, \theta_\beta, h_\beta)(t)\|_{L^2_{l}(\Omega)}^2\\
&\le \delta_1 C
\|\partial_y( u, \theta, h)(t)\|_{\mathcal{H}^m_0}^2
+C\delta_1^{-1}(\|(u, \theta, h)(t)\|_{\mathcal{H}^m_l}^8+1)
+\sum_{\tiny\substack{|\alpha|\le m \\ |\beta|\le m-1}}
\|D^\alpha(r_1, r_2, r_3)(t)\|_{L^2_{l+k}(\Omega)}^2\\
&\quad +36\delta_0^{-4}\sum_{|\beta|=m}
\left\{\sum_{i=1}^3\|\partial_\tau^\beta r_i\|_{L^2_l(\Omega)}^2
+\|\eta_i \partial_\tau^\beta r_4\|_{L^2_l(\Omega)}^2\right\}
+C\delta_0^{-4}\!\!\sum_{|\beta|\le m+2}\!\!
\|\partial_\tau^\beta(U, \Theta, H)(t)\|_{L^2(\mathbb{T}_x)}^{10}\\
&\quad +C\delta_0^{-8}(\|(u, \theta, h)(t)\|_{\mathcal{H}^m_l}^{12}
+\|(u_\beta, \theta_\beta, h_\beta)(t)\|_{L^2_l(\Omega)}^4+1).
\end{aligned}
\end{equation}
Choosing $\delta_1$ small enough in \eqref{354}
and applying the estimates \eqref{equ1}, \eqref{equ2}, \eqref{a343},
and \eqref{a344}, it is easy to deduce that
\begin{equation}\label{355}
\begin{aligned}
&\frac{d}{dt}(\sum_{\tiny\substack{|\alpha|\le m \\ |\beta|\le m-1}}
\|D^\alpha(u, \theta, h)(t)\|_{L^2_{l+k}(\Omega)}^2
+36\delta_0^{-4}
\sum_{|\beta|=m}\|(u_\beta, \theta_\beta, h_\beta)(t)\|_{L^2_{l}(\Omega)}^2)\\
&+c_1
(\sum_{\tiny\substack{|\alpha|\le m \\ |\beta|\le m-1}}
\|D^\alpha\partial_y(u, \theta, h)(t)\|_{L^2_{l+k}(\Omega)}^2
+36\delta_0^{-4}
\sum_{|\beta|=m}\|\partial_y(u_\beta, \theta_\beta, h_\beta)(t)\|_{L^2_{l}(\Omega)}^2)\\
&\le C\delta_0^{-8}(\sum_{\tiny\substack{|\alpha|\le m \\ |\beta|\le m-1}}
\|D^\alpha(u, \theta, h)(t)\|_{L^2_{l+k}(\Omega)}^2
+36\delta_0^{-4}
\sum_{|\beta|=m}\|(u_\beta, \theta_\beta, h_\beta)(t)\|_{L^2_{l}(\Omega)}^2)\\
&\quad +C\sum_{\tiny\substack{|\alpha|\le m \\ |\beta|\le m-1}}
\|D^\alpha(r_1, r_2, r_3)(t)\|_{L^2_{l+k}(\Omega)}^2
+C\delta_0^{-8}(\sum_{|\beta|\le m+2}
\|\partial_\tau^\beta(U, \Theta, H)(t)\|_{L^2(\mathbb{T}_x)}^2+1)^5\\
&\quad +C\delta_0^{-4} \sum_{|\beta|=m}
\left\{\|\partial_\tau^\beta(r_1, r_2, r_3)(t)\|_{L^2_l(\Omega)}^2
+4\delta_0^{-4}\|\partial_\tau^\beta r_4(t)\|_{L^2_{-1}(\Omega)}^2\right\}.
\end{aligned}
\end{equation}
Then, we apply the comparison principle of ordinary differential equation
to \eqref{355} get that
\begin{equation*}
\begin{aligned}
&\sum_{\tiny\substack{|\alpha|\le m \\ |\beta|\le m-1}}
\|D^\alpha(u, \theta, h)(t)\|_{L^2_{l+k}(\Omega)}^2
+36\delta_0^{-4}
\sum_{|\beta|=m}\|(u_\beta, \theta_\beta, h_\beta)(t)\|_{L^2_{l}(\Omega)}^2\\
&+c_1\int_0^t
(\sum_{\tiny\substack{|\alpha|\le m \\ |\beta|\le m-1}}
\|D^\alpha\partial_y(u, \theta, h)(\tau)\|_{L^2_{l+k}(\Omega)}^2
+36\delta_0^{-4}
\sum_{|\beta|=m}\|\partial_y(u_\beta, \theta_\beta, h_\beta)(\tau)
\|_{L^2_{l}(\Omega)}^2)d\tau\\
&\le [F(0)+\int_0^t G(\tau)d\tau]
\left\{1-C\delta_0^{-8}t[F(0)+\int_0^t G(\tau)d\tau]^5\right\}^{-\frac{1}{5}},
\end{aligned}
\end{equation*}
which, together with \eqref{a343}, yields directly
\begin{equation*}\label{356}
\|(u, \theta, h)(t)\|_{\mathcal{H}^m_l}
\le \left\{F(0)+\int_0^t G(\tau)d\tau \right\}^{\frac{1}{2}}
\left\{1-C\delta_0^{-8}~t~[F(0)+\int_0^t G(\tau)d\tau]^5\right\}^{-\frac{1}{10}}.
\end{equation*}
As we know, we have for $i=1,2$,
\begin{equation}\label{357}
\langle y \rangle^{l+1}\partial_y^i (u, \theta, h)(t, x, y)
=\langle y \rangle^{l+1}\partial_y^i (u_0, \theta_0, h_0)(x, y)
+\int_0^t \langle y \rangle^{l+1}\partial_y^i \partial_s(u, \theta, h)(s, x, y)ds,
\end{equation}
and
\begin{equation*}\label{358}
h(t, x, y)=h_0(x, y)+\int_0^t \partial_s h(s, x, y)ds.
\end{equation*}
In view of the Sobolev embedding theorem and the relation \eqref{357}, one arrives at
\begin{equation*}
\begin{aligned}
&\|\langle y\rangle^{l+1}\partial_y^i (u, \theta, h)(t, x, y)\|_{L^\infty(\Omega)}\\
&\le \|\langle y\rangle^{l+1}\partial_y^i (u_0, \theta_0, h_0)(x, y)\|_{L^\infty(\Omega)}
+C t \underset{0\le s \le t}{\sup}\|(u, \theta, h)(s)\|_{\mathcal{H}_l^5}\\
&\le \|\langle y\rangle^{l+1}\partial_y^i (u_0, \theta_0, h_0)(x, y)\|_{L^\infty(\Omega)}\\
&\quad +C t  \left\{F(0)+\int_0^t G(\tau)d\tau\right\}^{\frac{1}{2}}
\left\{1-C\delta_0^{-8}~t~[F(0)+\int_0^t G(\tau)d\tau]^5\right\}^{-\frac{1}{10}}\\
\end{aligned}
\end{equation*}
and similarly, we also have
\begin{equation*}
\begin{aligned}
h(t, x, y)
&\ge h_0(x, y)-\int_0^t \|\partial_\tau h(\tau, x, y)\|_{L^\infty(\Omega)}d\tau\\
&\ge h_0(x, y)-C t \underset{0\le s \le t}{\sup}\|h(s)\|_{\mathcal{H}_0^3}\\
&\ge h_0(x, y)-C t  \left\{F(0)+\int_0^t G(\tau)d\tau\right\}^{\frac{1}{2}}
\left\{1-C\delta_0^{-8}~t~[F(0)+\int_0^t G(\tau)d\tau]^5\right\}^{-\frac{1}{10}}.
\end{aligned}
\end{equation*}
Therefore, we complete the proof of Proposition \ref{pro2}.
\end{proof}

\begin{proof}[\textbf{Proof of Proposition \ref{pro1}.}]
Indeed, it is easy to deduce from the definition of \eqref{functiong} that
\begin{equation}\label{ap1}
G(t)\le C\delta_0^{-8}M_0^{10}.
\end{equation}
On the other hand, it is easy to check that
$D^{\alpha}(u, \theta, h)(0, x, y), |\alpha |\le m$
can be expressed by the spatial derivatives of initial data
$(u_0, \theta_0, h_0)$ up to order $2m$.
Then, we get that
\begin{equation}\label{ap2}
F(0)\le \delta_0^{-8}\mathcal{P}(M_0+\|(u_0, \theta_0, h_0)\|_{H^{2m}_l(\Omega)}).
\end{equation}
Substituting the estimates \eqref{ap1} and \eqref{ap2} into \eqref{351}-\eqref{353},
then it is easy to get the estimates \eqref{p11}-\eqref{p13}.
Therefore, we complete the proof of Proposition \ref{pro1}.

\end{proof}

\section{Local-in-time Existence and Uniqueness}\label{sc}

\quad In this section, we will establish the local-in-time existence and
uniqueness of solution to the nonlinear MHD boundary layer problem
\eqref{eq4}.

\subsection{Local-in-time Existence for the Boundary Layer Equations}
\quad In this subsection, we investigate a parabolic regularized system
for the nonlinear problem \eqref{eq4}, which we can obtain
the local-in-time existence of solution by using the classical energy estimates.
More precisely, for a small parameter $0<\epsilon<1$, one investigates the following system:
\begin{equation}\label{eq41}
\left\{
\begin{aligned}
&\partial_t u^\epsilon+[(u^\epsilon+U\phi')\partial_x+(v^\epsilon-U_x \phi)\partial_y]u^\epsilon
-[(h^\epsilon+H\phi')\partial_x+(g^\epsilon-H_x \phi)\partial_y]h^\epsilon\\
&\quad -\mu{\partial_y}[(\theta^\epsilon+\Theta \phi'(y)+1){\partial_y}u^\epsilon]
-\epsilon \partial_x^2 u^\epsilon
+U_x \phi' u^\epsilon+U \phi'' v^\epsilon-H_x \phi' h^\epsilon\\
&\quad -H \phi'' g^\epsilon-U\phi^{(3)}\theta^\epsilon-U\phi''\theta^\epsilon_y
=r^\epsilon_1,\\
&c_v\{\partial_t \theta^\epsilon+\!(u^\epsilon+U\phi')\partial_x \theta^\epsilon
+(v^\epsilon-U_x \phi)\partial_y \theta^\epsilon \}
-\kappa \partial_y^2 \theta-\epsilon \partial_x^2 \theta^\epsilon
+c_v \Theta_x \phi' u^\epsilon+c_v \Theta \phi'' v^\epsilon\\
&\quad -\mu\theta^\epsilon (u^\epsilon_y)^2-\mu (U\phi'')^2 \theta^\epsilon
-2\mu U \phi'' \theta^\epsilon \partial_y u^\epsilon
-\mu \Theta \phi'(\partial_y u^\epsilon)^2
-2\mu \Theta U \phi' \phi'' \partial_y u^\epsilon\\
&\quad -2\mu U\phi'' \partial_y u^\epsilon
-\mu (\partial_y u^\epsilon)^2-\nu (\partial_y h^\epsilon)^2
-2\nu H \phi' \partial_y h^\epsilon=r^\epsilon_2,\\
&\partial_t h^\epsilon+[(u^\epsilon+U\phi')\partial_x
+(v^\epsilon-U_x \phi)\partial_y]h^\epsilon
-[(h^\epsilon+H\phi')\partial_x+(g^\epsilon-H_x \phi)\partial_y]u^\epsilon\\
&\quad -\nu \partial_y^2 h^\epsilon-\epsilon \partial_x^2 h^\epsilon
+H_x \phi' u^\epsilon+H \phi'' v^\epsilon- U_x \phi' h^\epsilon-U \phi'' g^\epsilon
=r^\epsilon_3,\\
&\partial_x u^\epsilon+\partial_y v^\epsilon=0,
\quad \partial_x h^\epsilon+\partial_y g^\epsilon=0,\\
&\left.(u^\epsilon, \theta^\epsilon, h^\epsilon)\right|_{t=0}
=(u_0, \theta_0, h_0)(x, y)\quad
\left.(u^\epsilon, v^\epsilon, \theta^\epsilon,
\partial_y h^\epsilon, g^\epsilon)\right|_{y=0}=0,
\end{aligned}
\right.
\end{equation}
where the source term $(r_1^\epsilon, r_2^\epsilon, r_3^\epsilon)$
is defined by
\begin{equation}
(r_1^\epsilon, r_2^\epsilon, r_3^\epsilon)(t, x, y)
\triangleq (r_1, r_2, r_3)(t, x, y)
+\epsilon (\widetilde{r}_1^\epsilon, \widetilde{r}_2^\epsilon,
\widetilde{r}_3^\epsilon)(t, x, y).
\end{equation}
Here, $(r_1, r_2, r_3)$ is source term of the original problem \eqref{eq4},
and $(\widetilde{r}^\epsilon_1, \widetilde{r}^\epsilon_2, \widetilde{r}^\epsilon_3)$
is constructed to ensure that the initial data $(u_0, \theta_0, h_0)$
also satisfies the compatibility conditions of \eqref{eq41} up to the order of $m$.
Indeed, we can use the given functions
$\partial_t^i(u, \theta, h)(0, x, y), 0\le i \le m$, which can be derived from the equations
and initial data of \eqref{eq4} by induction with respect to $i$,
and it follows that $\partial_t^i(u, \theta, h)(0, x, y)$ can be expressed as
polynomials of the spatial derivatives, up to order $2i$, of the initial data
$(u_0, \theta_0, h_0)$.
Then, similar to \cite{Liu-Xie-Yang}, one can choose
the corrector
$(\widetilde{r}^\epsilon_1, \widetilde{r}^\epsilon_2, \widetilde{r}^\epsilon_3)$
in the following form:
\begin{equation}
(\widetilde{r}_1^\epsilon, \widetilde{r}_2^\epsilon, \widetilde{r}_3^\epsilon)(t, x, y)
\triangleq -\sum_{i=0}^{m}
\frac{t^i}{i!}\partial_t^i\partial_x^2(u, \theta, h)(0, x, y),
\end{equation}
which, yields that by a direct calculation
\begin{equation}
\partial_t^i(u^\epsilon, \theta^\epsilon, h^\epsilon)(0, x, y)
=\partial_t^i(u, \theta, h)(0, x, y), \quad 0 \le i \le m.
\end{equation}
Similarly, we can derive that
$\psi^\epsilon \triangleq \partial_y^{-1} h^\epsilon$ satisfies
\begin{equation}
\partial_t \psi^\epsilon+
[(u^\epsilon+U\phi')\partial_x+(v^\epsilon-U_x \phi)\partial_y]\psi^\epsilon
+H_x \phi u^\epsilon
-\nu \partial_y^2 \psi^\epsilon
-\epsilon \partial_x^2 \psi^\epsilon=r_4^\epsilon,
\end{equation}
where
\begin{equation}
r_4^\epsilon \triangleq r_4+\epsilon\widetilde{ r}_4^\epsilon, \quad
\widetilde{r}_4^\epsilon \triangleq-\sum_{i=0}^m
\frac{t^i}{i!}\int_0^y \partial_t^i \partial_x^2 h(0, x, z)dz.
\end{equation}
Then, one attains directly for $\alpha=(\beta, k)=(\beta_1, \beta_2, k)$ with
$|\alpha|\le m$,
\begin{equation}\label{estimate8}
\|D^{\alpha}(\widetilde{r}^\epsilon_1, \widetilde{r}^\epsilon_2,
\widetilde{r}^\epsilon_3)\|_{L^2_{l+k}(\Omega)},~
\|\partial_\tau^\beta \widetilde{r}^\epsilon_4\|_{L^2_{-1}(\Omega)}
\le t^{i-\beta_1}\sum_{\beta_1 \le i \le m}\mathcal{P}
(M_0+\|(u_0, \theta_0, h_0)\|_{H^{2i+2+\beta_2+k}_l}).
\end{equation}

Now, we are can obtain the following proposition by the previous estimate.

\begin{proposition}\label{pro41}
Under the hypotheses of Theorem \ref{theo2}, there exist a positive time
$0<T_*\le T$, independent of $\epsilon$, and a solution
$(u^\epsilon, v^\epsilon, \theta^\epsilon, h^\epsilon, g^\epsilon)$
to the initial boundary value problem \eqref{eq41} with
$(u^\epsilon, \theta^\epsilon, h^\epsilon)\in L^\infty(0, T_*; \mathcal{H}^m_l)$,
which satisfies the following uniform estimates in $\epsilon$:
\begin{equation}\label{41}
\underset{0\le t\le T_*}{\sup}\|
(u^\epsilon, \theta^\epsilon, h^\epsilon)(t)\|_{\mathcal{H}^m_l}
\le 2F(0)^{\frac{1}{2}},
\end{equation}
where $F(0)$ is given by \eqref{functionf}.
Moreover, for $(t, x, y)\in [0, T_*]\times \Omega$, it holds on
\begin{equation}\label{42a}
|\langle y \rangle^{l+1}\partial_y^i(u, \theta, h)(t, x, y)|\le \delta_0^{-1},
\quad i=1,2,
\end{equation}
and
\begin{equation}\label{42b}
~h(t, x, y)+H(t, x)\phi'(y)\ge \delta_0.
\end{equation}
\end{proposition}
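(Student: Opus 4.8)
The plan is to regard \eqref{eq41} as a uniformly parabolic regularization of the degenerate system \eqref{eq4} and to run a continuation argument built entirely on the a priori bounds of Proposition \ref{pro1} and Proposition \ref{pro2}, verifying at every stage that all constants are independent of $\epsilon$. The three conclusions \eqref{41}, \eqref{42a} and \eqref{42b} will then come out simultaneously from a single bootstrap on a short common time interval.

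First I would settle short-time existence for each fixed $\epsilon\in(0,1)$. Since the term $-\epsilon\partial_x^2$ renders \eqref{eq41} parabolic in both $x$ and $y$, a standard linearization-and-iteration scheme (or a Galerkin approximation) in the weighted space $\mathcal{H}^m_l$ produces a solution $(u^\epsilon,v^\epsilon,\theta^\epsilon,h^\epsilon,g^\epsilon)$ on some interval $[0,T_\epsilon]$ with $(u^\epsilon,\theta^\epsilon,h^\epsilon)\in L^\infty(0,T_\epsilon;\mathcal{H}^m_l)$. The correctors $(\widetilde r_1^\epsilon,\widetilde r_2^\epsilon,\widetilde r_3^\epsilon)$ are precisely engineered so that the compatibility conditions of \eqref{eq41} hold up to order $m$ and $\partial_t^i(u^\epsilon,\theta^\epsilon,h^\epsilon)(0)=\partial_t^i(u,\theta,h)(0)$ for $0\le i\le m$; this keeps the iterates in $\mathcal{H}^m_l$ and lets the scheme converge in a lower norm. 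The only care needed here is to carry the weights $\langle y\rangle^{l+k}$ through the parabolic estimates, which is routine.

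The essential point is that the estimates of Section \ref{sb} transfer to \eqref{eq41} with $\epsilon$-independent constants. The regularized system differs from \eqref{eq4} only by the extra dissipation $-\epsilon\partial_x^2$ and the source perturbation $\epsilon(\widetilde r_1^\epsilon,\widetilde r_2^\epsilon,\widetilde r_3^\epsilon)$. Repeating the weighted energy computations of Lemma \ref{lemma3.3} and Lemma \ref{lemma3.4}, each $-\epsilon\partial_x^2$ term integrates by parts in $x$ (periodicity gives no boundary contribution) into $+\epsilon\|\partial_x D^\alpha(\cdot)\|_{L^2_{l+k}(\Omega)}^2$, a favorable term I simply discard; the cancellation structure underlying $u_\beta,\theta_\beta,h_\beta$ in \eqref{function3} uses only the tangential transport and the magnetic coupling and is therefore untouched. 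The perturbed sources are bounded by \eqref{estimate8} in terms of $M_0$ and $\|(u_0,\theta_0,h_0)\|_{H^{2m}_l}$ alone. Hence Proposition \ref{pro1} and Proposition \ref{pro2} hold verbatim for the $\epsilon$-solution, delivering \eqref{p11}--\eqref{p13} and \eqref{351}--\eqref{353} with constants free of $\epsilon$.

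Finally I would close by continuation. Using the strict initial inequalities $h_0+H(0,x)\phi'\ge 2\delta_0$ and $|\langle y\rangle^{l+1}\partial_y^i(u_0,\theta_0,h_0)|\le(2\delta_0)^{-1}$ of Theorem \ref{theo2}, let $T_*^\epsilon$ be the maximal time on which the loose bounds \eqref{42a}--\eqref{42b} (with $\delta_0$) hold; there the hypotheses of Proposition \ref{pro1} are met, so $M(t)\le 6\delta_0^{-2}$ and \eqref{351}--\eqref{353} are in force. Choosing $T_*$ so small that the growth factor $\{1-C\delta_0^{-8}\,t\,[F(0)+\int_0^t G(\tau)d\tau]^5\}^{-1/10}$ stays near $1$ forces the $\mathcal{H}^m_l$-norm in \eqref{351} below $2F(0)^{1/2}$, which is \eqref{41}; simultaneously the time-integral corrections in \eqref{352} and \eqref{353} remain below $\tfrac12\delta_0^{-1}$ and $\delta_0$, so $|\langle y\rangle^{l+1}\partial_y^i(u^\epsilon,\theta^\epsilon,h^\epsilon)|$ degrades from $(2\delta_0)^{-1}$ only to at most $\delta_0^{-1}$ and $h^\epsilon+H\phi'$ degrades from $2\delta_0$ only to at least $\delta_0$. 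Because these are strict improvements of the defining bounds, the solution cannot fail them at $T_*$, whence $T_*^\epsilon\ge T_*$ with $T_*$ uniform in $\epsilon$. The hard part, as always with this method, is the self-consistency of the magnetic lower bound \eqref{42b}: defining $\eta_1,\eta_2,\eta_3$ and controlling $M(t)$ requires $h+H\phi'\ge\delta_0$, yet maintaining that lower bound relies on \eqref{353}, which rests on the $\mathcal{H}^m_l$ estimate; the bootstrap is exactly what decouples this circularity by propagating all three bounds together, and the one delicate check is that the $\epsilon$-regularization neither weakens the lower bound nor inflates any constant.
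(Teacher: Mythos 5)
Your proposal follows essentially the same route as the paper: local existence for fixed $\epsilon$ from the parabolic regularization, transfer of the a priori estimates of Propositions \ref{pro1}--\ref{pro2} with $\epsilon$-independent constants (the $-\epsilon\partial_x^2$ term being dissipative and the correctors controlled by \eqref{estimate8}), and a continuity/bootstrap argument choosing $T_*$ small enough that the $\mathcal{H}^m_l$ bound, the pointwise derivative bounds, and the lower bound on $h+H\phi'$ are all strictly improved from their initial values. The only minor slip is that the corrector bounds require $\|(u_0,\theta_0,h_0)\|_{H^{3m+2}_l}$ rather than $H^{2m}_l$, exactly as in \eqref{estimate8}, but this does not affect the argument.
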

\begin{proof}
First of all, one can establish the a priori estimates as in Proposition \ref{pro2}
for the regularized boundary layer systems \eqref{eq41}.
Then, the standard continuity argument helps us obtain
the existence of solution in a time interval $[0, T_*], T_*>0$
independent of $\epsilon$.
Hence, the only task for us is to determine the uniform lifespan $T_*$,
and verify estimates \eqref{41}- \eqref{42b}.
Indeed, we apply the Proposition \ref{pro2} to get that
\begin{equation}\label{43}
\underset{0\le s \le t}{\sup}\|(u, \theta, h)(s)\|_{\mathcal{H}^m_l}
\le \left\{F(0)+\int_0^t G^\epsilon(\tau)d\tau\right\}^{\frac{1}{2}}
\left\{1-C\delta_0^{-8}~t~[F(0)+\int_0^t G^\epsilon(\tau)d\tau]^5\right\}^{-\frac{1}{10}},
\end{equation}
where the function $G^\epsilon(t)$ is defined as follows:
\begin{equation}\label{functiong1}
\begin{aligned}
G^\epsilon(t)
&\triangleq C\delta_0^{-8}(1+\sum_{|\beta|\le m+2}
\|\partial_\tau^\beta(U, \Theta, H, P)(t)\|_{L^2(\mathbb{T}_x)}^2)^5
+C\sum_{\tiny\substack{|\alpha|\le m \\ |\beta|\le m-1}}
\|D^\alpha(r^\epsilon_1, r^\epsilon_2, r^\epsilon_3)(t)\|_{L^2_{l+k}(\Omega)}^2\\
&\quad +C\delta_0^{-4}\sum_{|\beta|=m}
\left\{\|\partial_\tau^\beta (r^\epsilon_1, r^\epsilon_2, r^\epsilon_3)\|_{L^2_l(\Omega)}^2
+4\delta_0^{-4}\|\partial_\tau^\beta r^\epsilon_4\|_{L^2_{-1}(\Omega)}^2\right\}.\\
\end{aligned}
\end{equation}
Recalling the definition of $G(t)$(see \eqref{functiong}), it is easy to check that
\begin{equation*}
\begin{aligned}
G^\epsilon(t)
&=G(t)+C\epsilon^2\delta_0^{-4}\sum_{|\beta|=m}
\left\{\|\partial_\tau^\beta (\widetilde{r}^\epsilon_1, \widetilde{r}^\epsilon_2, \widetilde{r}^\epsilon_3)\|_{L^2_l(\Omega)}^2
+4\delta_0^{-4}\|\partial_\tau^\beta \widetilde{r}^\epsilon_4\|_{L^2_{-1}(\Omega)}^2\right\}\\
&\quad +C\epsilon^2\sum_{\tiny\substack{|\alpha|\le m \\ |\beta|\le m-1}}
\|D^\alpha(\widetilde{r}^\epsilon_1, \widetilde{r}^\epsilon_2, \widetilde{r}^\epsilon_3)(t)\|_{L^2_{l+k}(\Omega)}^2.
\end{aligned}
\end{equation*}
Then, the combination of \eqref{ap1} and \eqref{estimate8} yields immediately
\begin{equation}\label{44}
G^\epsilon(t)\le C\delta_0^{-8}M_0^{10}
+\epsilon^2 \delta_0^{-8}\mathcal{P}(M_0+\|(u_0, \theta_0, h_0)\|_{H^{3m+2}_l})
\le \delta_0^{-8}\mathcal{P}(M_0+\|(u_0, \theta_0, h_0)\|_{H^{3m+2}_l}).
\end{equation}
Therefore, we choose the existence time
\begin{equation*}
T_1 \triangleq \min\left\{\frac{\delta_0^8 F(0)}
{\mathcal{P}(M_0+\|(u_0, \theta_0, h_0)\|_{H^{3m+2}_l})},
\frac{3\delta_0^8}{128CF(0)^5}\right\}
\end{equation*}
in \eqref{43}, we can get the estimate \eqref{41} for all $T_*\le T_1$.

On the other hand, by virtue of the Proposition \ref{pro2},
it is easy to deduce the following bounds for
$\langle y \rangle^{l+1} \partial_y^i (u^\epsilon, \theta^\epsilon, h^\epsilon), i=1,2$ that
\begin{equation}\label{45}
\begin{aligned}
&\|\langle y\rangle^{l+1}\partial_y^i (u^\epsilon, \theta^\epsilon, h^\epsilon)
(t, x, y)\|_{L^\infty(\Omega)}\\
&\le \|\langle y\rangle^{l+1}\partial_y^i (u_0, \theta_0, h_0)(x, y)\|_{L^\infty(\Omega)}
+C t \underset{0\le s \le t}{\sup}
\|(u^\epsilon, \theta^\epsilon, h^\epsilon)(s)\|_{\mathcal{H}_l^5}\\
&\le (2\delta_0)^{-1}+2C F(0)^{\frac{1}{2}}t.
\end{aligned}
\end{equation}
Then, choosing the existence of time
\begin{equation*}
T_2 \triangleq \min \left\{T_1, \frac{1}{4C\delta_0 F(0)^{\frac{1}{2}}}\right\},
\end{equation*}
in \eqref{45}, one can find the estimate \eqref{42a}.
Similarly,  choosing the existence of time
\begin{equation*}
T_3 \triangleq \min \left\{T_2, \frac{\delta_0}{C^2(M_0+2F(0)^{\frac{1}{2}})^2}\right\},
\end{equation*}
then we apply the estimate in Proposition \ref{pro2} to deduce that
\begin{equation*}
\begin{aligned}
&h^\epsilon(t, x, y)+H(t,x)\phi'(y)\\
&\ge h_0(x, y)+H(t,x)\phi'(y)-C t \underset{0\le s \le t}{\sup}\|h(s)\|_{\mathcal{H}_0^3}\\
&\ge h_0(x, y)+(H(t,x)-H(0, x))\phi'(y)
-2CF(0)^{\frac{1}{2}}t\\
&\ge 2\delta_0-C(M_0+2F(0)^{\frac{1}{2}})t^{\frac{1}{2}}
\ge \delta_0,
\end{aligned}
\end{equation*}
where we have used the H\"{o}lder inequality.
Therefore, we find the lifespan $T_*=T_3$ and establish the estimates \eqref{41}
-\eqref{42b}, and consequently complete the proof of the Proposition \ref{pro41}.
\end{proof}

\begin{proof}[\textbf{Proof of Local Existence.}]
Indeed, we get the local existence of solutions
$(u^\epsilon, v^\epsilon, \theta^\epsilon, h^\epsilon, g^\epsilon)$
to the nonlinear MHD boundary layer problem \eqref{eq4} and
their uniform estimates in $\epsilon$.
Now, by letting $\epsilon \rightarrow 0$ one obtains the solution
to the original problem \eqref{eq4} by applying  some compactness argument.
Indeed, from the uniform estimates \eqref{41}, by the Lions-Aubin lemma
and the compact embedding of $H^m_l(\Omega)$ in $H^{m'}_{loc}(\Omega)$
for $m' < m$(see \cite[Lemma 6.2]{Masmoudi}), we know that there exists
$$
(u, \theta, h)\in L^\infty(0, T_*; \mathcal{H}^m_l)\cap
(\cap_{m'<m-1}C^1([0, T_*]; H^{m'}_{loc}(\Omega))),
$$
such that, up to a subsequeness,
\begin{equation}
\begin{array}{cc}
&\partial_t^i(u^\epsilon, \theta^\epsilon, h^\epsilon)
\stackrel{*}{\rightharpoonup}
\partial_t^i(u, \theta, h),
\quad {\rm in}\quad L^\infty(0, T_*; H^{m-i}_l(\Omega)),
\quad 0\le i \le m,\\
&(u^\epsilon, \theta^\epsilon, h^\epsilon)
\rightarrow (u, \theta, h)
\quad {\rm in}\quad
C^1([0, T_*];H^{m'}_{loc}(\Omega)).
\end{array}
\end{equation}
On the other hand, by virtue of
$(\partial_x u^\epsilon, \partial_x h^\epsilon) \in Lip(\Omega_{T_*})$,
we find the uniform convergence of $(\partial_x u^\epsilon, \partial_x h^\epsilon)$.
Then, we can obtain the the pointwise convergence for $(v^\epsilon, g^\epsilon)$, i.e.,
\begin{equation}
(v^\epsilon, g^\epsilon)
=(-\int_0^y \partial_x u^\epsilon dz, -\int_0^y \partial_x h^\epsilon dz)
\rightarrow
(-\int_0^y \partial_x u dz, -\int_0^y \partial_x h dz)
\triangleq (v, g).
\end{equation}

Now, we can pass the limit $\epsilon \rightarrow 0$ in problem \eqref{eq41},
and obtain that $(u, v, \theta, h, g)$, solves the original problem \eqref{eq4}.
By virtue of the definition of function space $\mathcal{H}_l^m$(see \eqref{bn}),
it is easy to get $(u, \theta, h)\in \cap_{i=0}^m W^{i,\infty}(0, T_*; H_l^{m-i}(\Omega))$
from the fact $(u, \theta, h)\in L^\infty(0, T_*; \mathcal{H}^m_l)$,
then one proves \eqref{211} directly.
On the other hand, the relation \eqref{212} follows directly by combining the
divergence free conditions
$v=-\partial_y^{-1}\partial_x u,~g=-\partial_y^{-1}\partial_x h$,
with \eqref{e7}. Therefore, we prove the local-in-time existence
of Theorem \ref{theo2}.
\end{proof}

\subsection{Uniqueness for the Boundary Layer Equations}
\quad In this subsection, we will give the uniqueness of the solution to
the nonlinear MHD boundary layer problem \eqref{eq4}.
Let $(u_1, v_1, \theta_1, h_1, g_1)$ and $(u_2, v_2, \theta_2, h_2, g_2)$
be two solutions in the lifespan $[0, T_*]$, constructed in the previous subsection,
with respect to the initial data $(u_{10}, \theta_{10}, h_{10})$ and
$(u_{20}, \theta_{20}, h_{20})$ respectively. Set
\begin{equation*}
(\widetilde{u}, \widetilde{v}, \widetilde{\theta}, \widetilde{h}, \widetilde{g})
\triangleq (u_1-u_2, v_1-v_2, \theta_1-\theta_2, h_1-h_2, g_1-g_2),
\end{equation*}
then we obtain the following systems:
\begin{equation}\label{4beq1}
\left\{
\begin{aligned}
&\partial_t \widetilde{u}
+[(u_1+U\phi')\partial_x+(v_1-U_x \phi)\partial_y]\widetilde{u}
-[(h_1+H\phi')\partial_x+(g_1-H_x \phi)\partial_y]\widetilde{h}\\
&\quad
-\mu \partial_y [(\theta_1+\Theta \phi'+1)\partial_y \widetilde{u}]
+(\partial_x u_2+U_x \phi')\widetilde{u}
+(\partial_y u_2+U \phi'')\widetilde{v}
-(\partial_x h_2+H_x \phi')\widetilde{h}\\
&\quad -(\partial_y h_2+H\phi'')\widetilde{g}
-\mu \partial_y(\widetilde{\theta} \partial_y u_2)
-U \phi^{(3)}\widetilde{\theta} -U\phi'' \partial_y \widetilde{\theta}=0,\\
&c_v \left[\partial_t \widetilde{\theta}
+(u_1+U\phi')\partial_x \widetilde{\theta}
+(v_1-U_x \phi)\partial_y \widetilde{\theta}\right]
-\kappa \partial_y^2 \widetilde{\theta}
+c_v (\partial_x \theta_2 + \partial_x \Theta \phi')\widetilde{u}
+c_v (\partial_y \theta_2+\Theta \phi'')\widetilde{v}\\
&\quad
-\mu \widetilde{\theta} [(U \phi'')^2+(\partial_y u_1)^2+2U \phi'' \partial_y u_1]
-\nu \partial_y \widetilde{h}(2H\phi'+\partial_y h_1+\partial_y h_2)
-\mu a_1 \partial_y \widetilde{u} =0,\\
&\partial_t \widetilde{h}
+[(u_1+U\phi')\partial_x +(v_1-U_x \phi)\partial_y]\widetilde{h}
-[(h_1+H\phi')\partial_x+(g_1-H_x \phi)\partial_y]\widetilde{u}
-\nu \partial_y^2 \widetilde{h}\\
&\quad
+(\partial_x h_2+H_x \phi')\widetilde{u}
+(\partial_y h_2+H \phi'')\widetilde{v}
-(\partial_x u_2+U_x \phi')\widetilde{h}
-(\partial_y u_2+U \phi'')\widetilde{g}=0,\\
&\partial_x \widetilde{u}+\partial_y \widetilde{v}=0, \quad
\partial_x \widetilde{h}+\partial_y \widetilde{g}=0,\\
&\left.(\widetilde{u}, \widetilde{\theta}, \widetilde{h})\right|_{t=0}
=(u_{10}-u_{20}, \theta_{10}-\theta_{20}, h_{10}-h_{20}),
\quad \left.(\widetilde{u}, \widetilde{v}, \widetilde{\theta},
\partial_y \widetilde{h}, \widetilde{g})\right|_{y=0}=0,
\end{aligned}
\right.
\end{equation}
where the function $a_1$ is defined by
\begin{equation*}
a_1 \triangleq 2\Theta U \phi' \phi''+2U\phi''
+\theta_2 \partial_y u_1 +\theta_2 \partial_y u_2
+2U \phi'' \theta_2
+\Theta \phi' \partial_y u_1+\Theta \phi' \partial_y u_2
+\partial_y u_1+\partial_y u_2.
\end{equation*}

On the other hand, denote
$\widetilde{\psi}=\partial_y^{-1} \widetilde{h}=\partial_y^{-1}(h_2-h_1)$,
then it is easy to check that $\widetilde{\psi}$ satisfies the following equation:
\begin{equation}\label{4beq2}
\partial_t \widetilde{\psi}
+[(u_1+U\phi')\partial_x+(v_1-U_x \phi)\partial_y]\widetilde{\psi}
-\nu \partial_y^2 \widetilde{\psi}
-(g_2-H_x \phi)\widetilde{u}
+(h_2+H\phi')\widetilde{v}=0.
\end{equation}
Let us introduce the following new quantities:
\begin{equation}\label{df1}
\overline{u} \triangleq \widetilde{u}-\eta_4 \widetilde{\psi}, \quad
\overline{\theta} \triangleq \widetilde{\theta}-\eta_5 \widetilde{\psi}, \quad
\overline{h} \triangleq \widetilde{h}-\eta_6 \widetilde{\psi},
\end{equation}
where
\begin{equation*}
\eta_4=\frac{\partial_y u_2+U\phi''}{h_2+H\phi'},\quad
\eta_5=\frac{\partial_y \theta_2+\Theta\phi''}{h_2+H\phi'},\quad
\eta_6=\frac{\partial_y h_2+H\phi''}{h_2+H\phi'}.
\end{equation*}
By virtue of the equations \eqref{4beq1}, \eqref{4beq2} and the definition \eqref{df1},
it is easy to verify that
$(\overline{u}, \overline{\theta}, \overline{h})$ admits the following problem:
\begin{equation}\label{4beq3}
\left\{
\begin{aligned}
& \partial_t \overline{u}
+[(u_1+U\phi')\partial_x+(v_1-U_x \phi)\partial_y]\overline{u}
-\mu \partial_y [(\theta_1+\Theta \phi'+1)\partial_y \overline{u}]
-\mu \partial_y [(\theta_1+\Theta \phi'+1) \partial_y \eta_4 \widetilde{\psi}]\\
&\quad -\mu \partial_y [(\theta_1+\Theta \phi'+1) \eta_4 \overline{h}]
-\mu \partial_y [(\theta_1+\Theta \phi'+1) \eta_4 \eta_6 \widetilde{\psi}]
-[(h_1+H\phi')\partial_x+(g_1-H_x \phi)\partial_y]\overline{h}\\
&\quad
+b_{1} \overline{u}+b_{2} \overline{\theta}+b_{3} \overline{h}
+c_1 \widetilde{\psi}
-(\mu \partial_y u_2+U \phi'') \partial_y \overline{\theta}=0,\\
& c_v[\partial_t
+(u_1+U\phi')\partial_x
+(v_1-U_x \phi)\partial_y ]\overline{\theta}-\kappa \partial_y^2 \overline{\theta}
+b_{4}\overline{u}+b_{5}\overline{\theta}+b_{6}\overline{h}
+c_2 \widetilde{\psi}-\mu a_1 \partial_y \overline{u}\\
&\quad +[(c_v \nu -\kappa)\eta_5
-\nu(2H \phi'+\partial_y h_1+\partial_y h_2)]\partial_y \overline{h}=0,\\
&\partial_t \overline{h}
+[(u_1+U\phi')\partial_x+(v_1-U_x \phi)\partial_y]\overline{h}
-[(h_1+H\phi')\partial_x+(g_1-H_x \phi)\partial_y]\overline{u}
-\nu \partial_y^2 \overline{h}\\
&\quad
+b_{7}\overline{u}+b_{8}\overline{h}+c_3 \widetilde{\psi}=0,
\end{aligned}
\right.
\end{equation}
where
\begin{equation*}
\begin{aligned}
& b_{1} \triangleq \partial_x u_2+U_x \phi'+(g_2-H_x\phi)\eta_4,
\quad b_{2}\triangleq -\mu \partial_y^2 u_2-U \phi^{(3)},\\
&b_{3}\triangleq \nu(\eta_4 \eta_6+2\partial_y \eta_4)
-(g_2-H_x \phi)\eta_6-(\partial_x h_2+H_x \phi')-2\nu \partial_y \eta_4
-(\mu \partial_y u_2+U \phi'')\eta_5,\\
&c_1 \triangleq \partial_t \eta_4
+[(u_1+U\phi')\partial_x+(v_1-U_x \phi)\partial_y]\eta_4
-\nu \partial_y^2 \eta_4
+\nu(\eta_4 \eta_6^2+\eta_4 \partial_y \eta_6
+2\eta_6 \partial_y \eta_4+\partial_y^2 \eta_4)\\
&\quad\quad
-[(h_1+H\phi')\partial_x+(g_1-H_x \phi)\partial_y]\eta_6
-(g_2-H_x \phi)\eta_6^2
+[\partial_x u_2+U_x \phi'+\eta_4(g_2-H_x \phi)]\eta_4\\
&\quad \quad
-(\partial_x h_2+H_x \phi'+2\nu \partial_y \eta_4)\eta_6
-(\mu \partial_y u_2+U\phi'')(\partial_y \eta_5+\eta_5 \eta_6)
-(\mu \partial_y^2 u_2+U\phi^{(3)})\eta_5,\\
&b_{4} \triangleq c_v[\partial_x \theta_2\!+\partial_x \Theta \phi'
\!+\eta_5(g_2-H_x \phi)\!+\eta_5(g_2-H_x \phi)],
b_{5} \triangleq -\mu[(U\phi'')^2+(\partial_y u_1)^2
+2U \phi'' \partial_y u_1],\\
&b_{6} \triangleq (c_v \nu-\kappa)(\eta_5 \eta_6+2\partial_y \eta_5)
-\nu \eta_6(2H\phi'+\partial_y h_1+\partial_y h_2)-\mu \eta_4 a_1,\\
&c_2 \triangleq (c_v \nu -\kappa)
(\eta_5\eta_6^2+\eta_5 \partial_y \eta_6+\partial_y^2 \eta_5+2\eta_6 \partial_y \eta_5)
+c_v \eta_4 [\partial_x \theta_2+\partial_x \Theta \phi'+\eta_5(g_2-H_x \phi)]\\
&\quad \quad -\mu \eta_5[(U\phi'')^2+(\partial_y u_1)^2+2U\phi'' \partial_y u_1]
-\mu(\eta_4 \eta_6+\partial_y \eta_4)a_1
-2c_v \nu \eta_6 \partial_y \eta_5
+c_v \eta_4 \eta_5 (g_2-H_x \phi)\\
&\quad \quad
\!+c_v (\partial_t \eta_5
\!+[(u_1+U\phi')\partial_x+(v_1-U_x \phi)\partial_y]\eta_5
\!-\nu \partial_y^2 \eta_5)
\!-\nu(\eta_6^2+\partial_y \eta_6)(2H\phi'+\partial_y h_1+\partial_y h_2),\\
&b_{7} \triangleq \partial_x h_2+H_x \phi'+\eta_6(g_2-H_x \phi),
\quad
b_{8}\triangleq -(\partial_x u_2+U_x \phi')
-2\nu \partial_y \eta_6-\eta_4(g_2-H_x \phi),\\
&c_3 \triangleq
\partial_t \eta_6+[(u_1+U\phi')\partial_x +(v_1-U_x \phi)\partial_y]\eta_6
-\nu \partial_y^2 \eta_6
-[(h_1+H\phi')\partial_x+(g_1-H_x \phi)\partial_y]\eta_4\\
&\quad \quad -2\nu \eta_6 \partial_y \eta_6
+(\partial_x h_2+H_x \phi')\eta_4
-(\partial_x u_2+U_x \phi')\eta_6.
\end{aligned}
\end{equation*}
Furthermore, we can also obtain the following boundary condition
\begin{equation}\label{bc5}
\left.(\overline{u}, \overline{\theta}, \partial_y \overline{h})\right|_{y=0}=0,
\end{equation}
and the initial data
\begin{equation}\label{initial}
\left\{
\begin{aligned}
&\overline{u}(0, x, y)
=u_{10}-u_{20}-
\frac{\partial_y u_{20}+U(0, x)\phi''(y)}{h_{20}+H(0, x)\phi'(y)}
\partial_y^{-1}(h_{10}-h_{20}),\\
&\overline{\theta}(0, x, y)
=\theta_{10}-\theta_{20}-
\frac{\partial_y \theta_{20}+\Theta(0, x)\phi''(y)}{h_{20}+H(0, x)\phi'(y)}
\partial_y^{-1}(h_{10}-h_{20}),\\
&\overline{h}(0, x, y)
=h_{10}-h_{20}-
\frac{\partial_y h_{20}+H(0, x)\phi''(y)}{h_{20}+H(0, x)\phi'(y)}
\partial_y^{-1}(h_{10}-h_{20}).
\end{aligned}
\right.
\end{equation}
\quad Furthermore, similar to \cite{Liu-Xie-Yang},
it is easy to deduce from \eqref{df1} that
\begin{equation*}
\overline{h}=(h_2+H\phi')\partial_y \left\{\frac{\widetilde{\psi}}{h_2+H\phi'}\right\},
\end{equation*}
which, together with the homogeneous boundary condition(i.e., $\widetilde{\psi}|_{y=0}=0$),
yields directly
\begin{equation}\label{df2}
\widetilde{\psi}(t, x, y)=(h_2(t, x, y)+H(t, x)\phi'(y))
\int_0^y \frac{\overline{h}(t, x, z)}{h_2(t, x, z)+H(t, x)\phi'(z)}dz.
\end{equation}
In view of $h_2+H\phi' \ge \delta_0$, then we applying inequality \eqref{e7} and
the representation \eqref{df2} to get
\begin{equation}\label{estimate5}
\left\|\langle y \rangle^{-1}{\widetilde{\psi}(t)}\right\|_{L^2(\Omega)}
\le 2\delta_0^{-1}\|h_2+H\phi'\|_{L^\infty([0, T_*]\times \Omega)}
\|\overline{h}(t)\|_{L^2(\Omega)}.
\end{equation}
On the other hand, similar to \eqref{estimate4}, one can get that
there exists a constant
\begin{equation*}
C=C(T_*, \delta_0, \phi, U, \Theta, H,
\|(u_1, \theta_1, h_1)\|_{\mathcal{H}_l^5},
\|(u_2, \theta_2, h_2)\|_{\mathcal{H}_l^5})>0,
\end{equation*}
such that
\begin{equation}\label{estimate6}
\|a_1\|_{L^\infty([0, T_*]\times \Omega)},~
\|b_i\|_{L^\infty([0, T_*]\times \Omega)},~
\|\langle y \rangle c_j\|_{L^\infty([0, T_*]\times \Omega)}, \le C ,\quad
i=1,2,...,8, ~j=1,2,3,
\end{equation}
which, together with \eqref{estimate5}, yields directly
\begin{equation}\label{estimate7}
\|(c_j \widetilde{\psi})(t)\|_{L^2(\Omega)}\le C\|\overline{h}(t)\|_{L^2(\Omega)},
\quad j=1,2,3.
\end{equation}

Now, we can establish the following proposition for the quantity
$(\overline{u}, \overline{\theta}, \overline{h})$
that will play an important in giving the uniqueness of solution
for nonlinear MHD boundary layer problem \eqref{eq4}.
\begin{proposition}\label{pro42}
Let $(u_1, v_1, \theta_1, h_1, g_1)$ and $(u_2, v_2, \theta_2, h_2, g_2)$ be two
solutions of the problem \eqref{eq4} with respect to the initial data
$(u_{10}, \theta_0, h_{10})$ and $(u_{20}, \theta_0, h_{20})$ respectively,
satisfying that $(u_i, \theta_i, h_i)\in \cap_{i=0}^m
W^{i,\infty}(0, T_*; H_l^{m-i}(\Omega))$
for $m \ge 5, i=1,2$. Then, there exists a positive constant
\begin{equation}\label{421}
C=C(T_*, \delta_0, \phi, U, \Theta, H,
\|(u_1, \theta_1, h_1)\|_{\mathcal{H}_l^5},
\|(u_2, \theta_2, h_2)\|_{\mathcal{H}_l^5})>0,
\end{equation}
such that for the quantity given by satisfying the following differential inequality:
\begin{equation}\label{422}
\frac{d}{dt}\|(\overline{u}, \overline{\theta}, \overline{h})(t)\|_{L^2(\Omega)}^2
+\|(\partial_y \overline{u},\partial_y \overline{\theta},
\partial_y \overline{h})(t)\|_{L^2(\Omega)}^2
\le C\|(\overline{u}, \overline{\theta}, \overline{h})(t)\|_{L^2(\Omega)}^2.
\end{equation}
\end{proposition}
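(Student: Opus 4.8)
The plan is to run a direct $L^2$ energy estimate on the system \eqref{4beq3} satisfied by $(\overline{u},\overline{\theta},\overline{h})$, whose whole purpose is that the derivative-losing terms have already been eliminated by the cancellation built into the change of unknowns \eqref{df1}. Concretely, I would multiply \eqref{4beq3}$_1$ by $\overline{u}$, \eqref{4beq3}$_2$ by $\overline{\theta}$, and \eqref{4beq3}$_3$ by $\overline{h}$, integrate over $\Omega$, and integrate by parts in $y$, using the homogeneous boundary conditions \eqref{bc5} (namely $\overline{u}|_{y=0}=\overline{\theta}|_{y=0}=\partial_y\overline{h}|_{y=0}=0$) and the decay at $y=+\infty$ to discard all boundary contributions. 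The principal second-order terms then yield the dissipation $\mu\int_\Omega(\theta_1+\Theta\phi'+1)|\partial_y\overline{u}|^2\,dxdy$, $\kappa\|\partial_y\overline{\theta}\|_{L^2(\Omega)}^2$ and $\nu\|\partial_y\overline{h}\|_{L^2(\Omega)}^2$; by the temperature lower bound \eqref{321}, which gives $\theta_1+\Theta\phi'+1\ge 1$, the first is bounded below by $\mu\|\partial_y\overline{u}\|_{L^2(\Omega)}^2$, so that after summation the full dissipation $c_0\|\partial_y(\overline{u},\overline{\theta},\overline{h})\|_{L^2(\Omega)}^2$ sits on the left-hand side.

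The step I expect to be the crux is the pair of cancellations coming from incompressibility. Since the velocity field $(u_1+U\phi',\,v_1-U_x\phi)$ is divergence free, each self-convective term $[(u_1+U\phi')\partial_x+(v_1-U_x\phi)\partial_y](\cdot)$ tested against its own unknown integrates, after one integration by parts, to a pure boundary term that vanishes by \eqref{bc5}; hence these terms disappear entirely. More importantly, the magnetic coupling terms must be treated jointly: the term $-[(h_1+H\phi')\partial_x+(g_1-H_x\phi)\partial_y]\overline{h}$ tested against $\overline{u}$ in \eqref{4beq3}$_1$ and $-[(h_1+H\phi')\partial_x+(g_1-H_x\phi)\partial_y]\overline{u}$ tested against $\overline{h}$ in \eqref{4beq3}$_3$ add up to $\int_\Omega[(h_1+H\phi')\partial_x+(g_1-H_x\phi)\partial_y](\overline{u}\,\overline{h})\,dxdy$, and because the field $(h_1+H\phi',\,g_1-H_x\phi)$ is divergence free (this is exactly $\partial_x h_1+\partial_y g_1=0$, together with $\partial_x(H\phi')-\partial_y(H_x\phi)=0$), this integral reduces to its boundary contribution, whose $y=0$ trace vanishes since $\overline{u}|_{y=0}=0$. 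This cancellation is precisely the stabilizing role of the magnetic field and is what replaces the monotonicity assumption.

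It then remains to dominate all lower-order and first-order remainders by $\tfrac12 c_0\|\partial_y(\overline{u},\overline{\theta},\overline{h})\|_{L^2(\Omega)}^2+C\|(\overline{u},\overline{\theta},\overline{h})\|_{L^2(\Omega)}^2$. The zeroth-order contributions $b_i\overline{u}$, $b_i\overline{\theta}$, $b_i\overline{h}$ and $\mu a_1\partial_y\overline{u}\cdot\overline{\theta}$ are controlled immediately via the uniform bounds \eqref{estimate6} and Cauchy's inequality, while the stream-function terms $c_j\widetilde{\psi}$ are handled by \eqref{estimate7}, which converts them into $C\|\overline{h}\|_{L^2(\Omega)}^2$. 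The genuinely first-order terms carrying one $\partial_y$ — such as $(\mu\partial_y u_2+U\phi'')\partial_y\overline{\theta}$ in \eqref{4beq3}$_1$, the bracket $[(c_v\nu-\kappa)\eta_5-\nu(2H\phi'+\partial_y h_1+\partial_y h_2)]\partial_y\overline{h}$ in \eqref{4beq3}$_2$, and the viscous remainders $\mu\partial_y[(\theta_1+\Theta\phi'+1)(\partial_y\eta_4\,\widetilde{\psi}+\eta_4\overline{h}+\eta_4\eta_6\widetilde{\psi})]$ in \eqref{4beq3}$_1$ — are integrated by parts where needed and split by Cauchy's inequality, depositing a small multiple of the dissipation on the left and leaving $C\|(\overline{u},\overline{\theta},\overline{h})\|_{L^2(\Omega)}^2$. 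Here the weighted $L^\infty$ bounds for $\eta_4,\eta_6,\partial_y\eta_4$ (the analogue of \eqref{estimate4}, subsumed in \eqref{estimate6}) and the Hardy-type control $\|\langle y\rangle^{-1}\widetilde{\psi}\|_{L^2(\Omega)}\le C\|\overline{h}\|_{L^2(\Omega)}$ from \eqref{estimate5} are exactly what render the $\widetilde{\psi}$-weighted integrals finite and bounded by $\|\overline{h}\|_{L^2(\Omega)}^2$. Choosing the Cauchy constants small enough to absorb the dissipative parts and summing the three estimates yields \eqref{422}, with the fixed factor $c_v$ absorbed into $C$.
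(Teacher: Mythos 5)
Your proposal is correct and follows essentially the same route as the paper: a plain $L^2$ energy estimate on the system \eqref{4beq3}, using the divergence-free cancellations of the convective and magnetic coupling terms, the temperature lower bound \eqref{321} to extract the $\overline{u}$-dissipation, and the bounds \eqref{estimate5}--\eqref{estimate7} together with Cauchy's inequality to absorb all remainders. The only cosmetic remark is that for the $\overline{h}$-equation the vanishing of the convective boundary term uses $v_1|_{y=0}=\phi(0)=0$ rather than \eqref{bc5} itself, since only $\partial_y\overline{h}|_{y=0}=0$ is imposed.
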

\begin{proof}
Multiplying equations \eqref{4beq3}$_1$ and \eqref{4beq3}$_3$
by $\overline{u}$ and  $\overline{h}$ respectively,
and integrating by part, we find
\begin{equation*}
\begin{aligned}
&\frac{1}{2}\frac{d}{dt}\int_\Omega (|\overline{u}|^2+|\overline{h}|^2) dxdy
+\mu \int_\Omega (\theta_1+\Theta \phi'+1)|\partial_y \overline{u}|^2 dxdy
+\nu \int_\Omega |\partial_y \overline{h}|^2 dxdy\\
&=-\mu \int_\Omega (\theta_1+\Theta \phi'+1)
(\partial_y \eta_4 \widetilde{\psi}
+\eta_4 \eta_6 \widetilde{\psi}+\eta_4 \overline{h})\partial_y \overline{u} dxdy
-\int_\Omega (\mu \partial_y u_2+U\phi'')\partial_y \overline{\theta} \cdot \overline{u} dxdy\\
&\quad -\int_\Omega (b_{1}\overline{u}+b_{2}\overline{\theta}
+b_{3}\overline{h}+c_1 \widetilde{\psi})\cdot \overline{u} dxdy
-\int_\Omega (b_{7}\overline{u}+b_{8}\overline{h}+c_3\widetilde{\psi})\cdot \overline{h} dxdy.
\end{aligned}
\end{equation*}
By using lower bound estimate for temperature \eqref{321},
the estimates \eqref{estimate6}-\eqref{estimate7}
and Cauchy inequality, it is easy to check that
\begin{equation}\label{424}
\begin{aligned}
&\frac{1}{2}\frac{d}{dt}\int_\Omega (|\overline{u}|^2+|\overline{h}|^2) dxdy
+\mu \int_\Omega |\partial_y \overline{u}|^2 dxdy
+\nu \int_\Omega |\partial_y \overline{h}|^2 dxdy\\
&\le \frac{\mu}{2}\int_\Omega |\partial_y \overline{u}|^2  dxdy
+\frac{\kappa}{2}\int_\Omega |\partial_y \overline{\theta}|^2  dxdy
+C\int_\Omega|(\overline{u}, \overline{\theta}, \overline{h})|^2 dxdy.
\end{aligned}
\end{equation}
Multiplying the equation \eqref{4beq3}$_2$
by $\overline{\theta}$ and integrating by part, one arrives at directly
\begin{equation*}
\frac{c_v}{2}\frac{d}{dt}\int_\Omega |\overline{\theta}|^2 dxdy
+\kappa \int_\Omega |\partial_y \overline{\theta}|^2dxdy
=\mu \int_\Omega a_1 \partial_y \overline{u}\cdot \overline{\theta} dxdy
-\int_\Omega(b_{4}\overline{u}+b_{5}\overline{\theta}
+b_{6}\overline{h}+c_2 \widetilde{\psi})\cdot \overline{\theta} dxdy.
\end{equation*}
Then, we apply the Cauchy inequality and the estimates \eqref{estimate6}-\eqref{estimate7}
to get that
\begin{equation*}
\frac{c_v}{2}\frac{d}{dt}\int_\Omega |\overline{\theta}|^2 dxdy
+\kappa \int_\Omega |\partial_y \overline{\theta}|^2dxdy
\le \frac{\mu}{2}\int_\Omega |\partial_y \overline{u}|^2 dxdy
+C\int_\Omega|(\overline{u}, \overline{\theta}, \overline{h})|^2 dxdy,
\end{equation*}
which, together with the inequality \eqref{424}, yields the estimate \eqref{422}.
Therefore, we complete the proof of Proposition \ref{pro42}.
\end{proof}

\begin{proof}[\textbf{Proof of Uniqueness.}]
Indeed, if the initial data satisfying
$(u_{10}, \theta_{10}, h_{10})=(u_{20}, \theta_{20}, h_{20})$,
then we deduce from the representation \eqref{initial} that
$(\overline{u}, \overline{\theta}, \overline{h})$ admits
the zero initial data
$\left.(\overline{u}, \overline{\theta}, \overline{h})\right|_{t=0}=0$.
Then, we apply the Gr\"{o}nwall inequality to \eqref{422} to get that
$(\overline{u}, \overline{\theta}, \overline{h})=0$.
Putting $\overline{h}\equiv 0$ into the representation \eqref{df2}, one get
that $\widetilde{\psi} \equiv 0$.
By direct calculation, one arrives at directly
\begin{equation*}
(u_1, \theta_1, h_1)-(u_2, \theta_2, h_2)
=(\overline{u}, \overline{\theta}, \overline{h})+(\eta_4, \eta_5, \eta_6)\psi=0,
\end{equation*}
which yields that $(u_1, \theta_1, h_1)\equiv(u_2, \theta_2, h_2)$.
Finally, in view of the relation
\begin{equation*}
v_i=-\partial_y^{-1}\partial_x u_i, \quad
g_i=-\partial_y^{-1}\partial_x h_i, \quad    i=1,2,
\end{equation*}
we find the uniqueness of solution of the nonlinear MHD boundary layer problem \eqref{eq4}.
\end{proof}

\appendix

\section{Calculus Inequalities}\label{appendixA}

\quad In this appendix, we will introduce some basic inequality that
be used frequently in this paper. For the proof in detail,
the interested readers can refer to \cite{Liu-Xie-Yang}.

\begin{lemma}
For proper functions $f, g, h$, the following holds.\\
{\rm (i)}If $\underset{y\rightarrow +\infty}{\lim}(fg)(x,y)=0$, then
\begin{equation}\label{e1}
\left|\int_{\mathbb{T}_x}\left.(fg)\right|_{y=0}\right|
\le \|\partial_y f\|_{L^2(\Omega)}\|g\|_{L^2(\Omega)}
+\|f\|_{L^2(\Omega)}\|\partial_y g\|_{L^2(\Omega)}.
\end{equation}
In particular, if $\underset{y\rightarrow +\infty}{\lim}f(x,y)=0$, then
\begin{equation}\label{e2}
\|\left.f\right|_{y=0}\|_{L^2(\mathbb{T}_x)}
\le \sqrt{2}\|f\|_{L^2(\Omega)}^{\frac{1}{2}}
\|\partial_y f\|_{L^2(\Omega)}^{\frac{1}{2}}.
\end{equation}
{\rm (ii)}If $l\in \mathbb{R}$ and an integer $m \ge 3$, any
$\alpha=(\beta, k)\in \mathbb{N}^3,
\widetilde{\alpha}=(\widetilde{\beta}, \widetilde{k})\in \mathbb{N}^3$
with $|\alpha|+|\widetilde{\alpha}|\le m$,
\begin{equation}\label{e3}
\|(D^\alpha f\cdot D^{\widetilde{\alpha}}g)(t, \cdot)\|_{L^2_{l+k+\widetilde{k}}(\Omega)}
\le C\|f(t)\|_{\mathcal{H}_{l_1}^m}\|g(t)\|_{\mathcal{H}_{l_2}^m},
~{\rm for~all}~l_1, l_2 \in \mathbb{R},~~l_1+l_2=l.
\end{equation}
{\rm (iii)}For any $\lambda > \frac{1}{2}, \widetilde{\lambda}>0$,
\begin{equation}\label{e4}
\|{\langle y\rangle}^{-\lambda}(\partial_y^{-1}f)(y)\|_{L^2_y(\mathbb{R}_+)}
\le \frac{2}{2\lambda-1}\|{\langle y\rangle}^{1-\lambda}f(y)\|_{L^2_y(\mathbb{R}_+)}
\end{equation}
and
\begin{equation}\label{e5}
\|{\langle y\rangle}^{-\widetilde{\lambda}}(\partial_y^{-1}f)(y)\|_{L^\infty_y(\mathbb{R}_+)}
\le \frac{1}{\widetilde{\lambda}}\|{\langle y\rangle}^{1-\widetilde{\lambda}}f(y)\|_{L^\infty_y(\mathbb{R}_+)}
\end{equation}
and then, for $l \in \mathbb{R}$, an integer $m \ge 3$,
and any $\alpha=(\beta, k)\in \mathbb{N}^3,
\widetilde{\beta}=(\widetilde{\beta}_1, \widetilde{\beta}_2)\in \mathbb{N}^2$
with $|\alpha|+|\widetilde{\beta}|\le m$,
\begin{equation}\label{e6}
\|(D^\alpha g\cdot \partial_\tau^{\widetilde{\beta}}\partial_y^{-1}h)(t,\cdot)\|_{L^2_{l+k}(\Omega)}
\le C\|g(t)\|_{\mathcal{H}^m_{l+\lambda}}\|h(t)\|_{\mathcal{H}^m_{1-\lambda}}.
\end{equation}
In particular, for $\lambda=1$,
\begin{equation}\label{e7}
\|{\langle y\rangle}^{-1}(\partial_y^{-1}f)(y)\|_{L^2_y(\mathbb{R}_+)}
\le 2\|f\|_{L^2_y(\mathbb{R}_+)},
\end{equation}
and
\begin{equation}\label{e8}
\|(D^\alpha g\cdot \partial_\tau^{\widetilde{\beta}}\partial_y^{-1}h)(t,\cdot)\|_{L^2_{l+k}(\Omega)}
\le C\|g(t)\|_{\mathcal{H}^m_{l+1}}\|h(t)\|_{\mathcal{H}^m_{0}}.
\end{equation}
%{\rm (iv)}For any $\lambda > \frac{1}{2}$,
%\begin{equation}\label{e9}
%\|(\partial_y^{-1}f)(y)\|_{L^\infty_y(\mathbb{R}^+)}
%\le C\|f\|_{L^2_{y,\lambda}(\mathbb{R}_+)}
%\end{equation}
%and then, for $l \in \mathbb{R}$, an integer $m \ge 2$, and any
%$\alpha=(\beta, k)\in \mathbb{N}^3,
%\widetilde{\beta}=(\widetilde{\beta}_1, \widetilde{\beta}_2)\in \mathbb{N}^2$
%with $|\alpha|+|\widetilde{\beta}|\le m$,
%\begin{equation}\label{e10}
%\|(D^\alpha g\cdot \partial_\tau^{\widetilde{\beta}}\partial_y^{-1}h)(t,\cdot)\|_{L^2_{l+k}(\Omega)}
%\le C\|g(t)\|_{\mathcal{H}^m_{l}}\|h(t)\|_{\mathcal{H}^m_{\lambda}}.
%\end{equation}
\end{lemma}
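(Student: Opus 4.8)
The plan is to prove the four groups of inequalities in increasing order of difficulty, treating the trace bounds and the Hardy inequalities as essentially immediate consequences of the fundamental theorem of calculus, and reserving the real work for the weighted Moser-type product estimate \eqref{e3}, which then feeds into \eqref{e6} and \eqref{e8}. For the trace estimates \eqref{e1}--\eqref{e2} I would start from $\lim_{y\to+\infty}(fg)=0$ to write $(fg)|_{y=0}=-\int_0^{+\infty}\partial_y(fg)\,dy=-\int_0^{+\infty}(\partial_y f\,g+f\,\partial_y g)\,dy$, integrate in $x$ over $\mathbb{T}_x$, and apply Cauchy--Schwarz on $\Omega$, which is exactly \eqref{e1}. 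Taking $g=f$ gives $\int_{\mathbb{T}_x}f^2|_{y=0}\,dx\le 2\int_\Omega|f|\,|\partial_y f|\,dxdy\le 2\|f\|_{L^2(\Omega)}\|\partial_y f\|_{L^2(\Omega)}$, and a square root yields \eqref{e2}.

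For the Hardy inequalities I would freeze $x$ and argue purely in $y$. For \eqref{e4} (and \eqref{e7} as the case $\lambda=1$) set $F=\partial_y^{-1}f$, use the identity $\langle y\rangle^{-2\lambda}=\frac{1}{1-2\lambda}\partial_y\langle y\rangle^{1-2\lambda}$ and integrate by parts in $y$; since $\lambda>\frac12$ the boundary term at $+\infty$ decays and $F(0)=0$ removes the one at $y=0$, leaving $\int_0^{+\infty}\langle y\rangle^{-2\lambda}F^2\,dy=\frac{2}{2\lambda-1}\int_0^{+\infty}\langle y\rangle^{1-2\lambda}Ff\,dy$. Splitting $\langle y\rangle^{1-2\lambda}=\langle y\rangle^{-\lambda}\langle y\rangle^{1-\lambda}$ and applying Cauchy--Schwarz, then dividing by the common factor, gives \eqref{e4}. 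For the pointwise version \eqref{e5} I would bound $|\langle y\rangle^{-\widetilde\lambda}F(y)|\le\langle y\rangle^{-\widetilde\lambda}\|\langle\cdot\rangle^{1-\widetilde\lambda}f\|_{L^\infty}\int_0^y\langle z\rangle^{\widetilde\lambda-1}\,dz$ and use $\int_0^y\langle z\rangle^{\widetilde\lambda-1}\,dz\le\widetilde\lambda^{-1}\langle y\rangle^{\widetilde\lambda}$.

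The crux is the weighted product estimate \eqref{e3}. I would distribute the weight as $\langle y\rangle^{l+k+\widetilde k}=\langle y\rangle^{l_1+k}\langle y\rangle^{l_2+\widetilde k}$ in accordance with $l_1+l_2=l$, and observe that because $|\alpha|+|\widetilde\alpha|\le m$ the smaller multi-index, say $|\widetilde\alpha|$, satisfies $|\widetilde\alpha|\le\lfloor m/2\rfloor\le m-2$, where the last inequality holds precisely when $m\ge 3$. I would then place $\langle y\rangle^{l_2+\widetilde k}D^{\widetilde\alpha}g$ in $L^\infty(\Omega)$ using the two-dimensional Sobolev embedding $H^2\hookrightarrow L^\infty$ (admissible since $|\widetilde\alpha|+2\le m$ and the weight $\langle y\rangle^{s}$ has bounded logarithmic $y$-derivatives), controlled by $\|g\|_{\mathcal{H}^m_{l_2}}$, and keep $\langle y\rangle^{l_1+k}D^\alpha f$ in $L^2(\Omega)$, controlled by $\|f\|_{\mathcal{H}^m_{l_1}}$. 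Summing over the Leibniz expansion gives \eqref{e3}; the hypothesis $m\ge 3$ is exactly what lets the factor that loses the derivative-splitting absorb the two extra derivatives needed for the $L^\infty$ embedding while staying within the budget $m$.

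Finally, \eqref{e6} and \eqref{e8} follow by combining \eqref{e3} with the Hardy inequalities: first commute the tangential derivatives through the antiderivative, $\partial_\tau^{\widetilde\beta}\partial_y^{-1}h=\partial_y^{-1}\partial_\tau^{\widetilde\beta}h$, then split $\langle y\rangle^{l+k}=\langle y\rangle^{(l+\lambda)+k}\langle y\rangle^{-\lambda}$, assigning the first factor to $D^\alpha g$ (producing $\|g\|_{\mathcal{H}^m_{l+\lambda}}$) and the weight $\langle y\rangle^{-\lambda}$ to $\partial_y^{-1}\partial_\tau^{\widetilde\beta}h$. Applying \eqref{e4} when the antiderivative carries the $L^2$ factor, or \eqref{e5} when it carries the $L^\infty$ factor, trades the weight $\langle y\rangle^{-\lambda}$ on $\partial_y^{-1}h$ for weight $\langle y\rangle^{1-\lambda}$ on $h$, yielding $\|h\|_{\mathcal{H}^m_{1-\lambda}}$ and hence \eqref{e6}, with \eqref{e8} the case $\lambda=1$. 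I expect the main obstacle to be the bookkeeping in \eqref{e3}--\eqref{e6}: one must verify that the two derivatives spent on the Sobolev embedding never push any index past $m$, that the weight powers balance exactly across the product, and that the $\partial_y^{-1}$ factor is always assigned a weight on which the corresponding Hardy inequality is valid; the analytic content beyond that is routine.
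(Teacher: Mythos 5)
The paper does not actually prove this lemma: Appendix A states it and refers the reader to Liu--Xie--Yang \cite{Liu-Xie-Yang} for the details, so there is no in-paper argument to compare against. Your self-contained proof is correct and is essentially the standard route taken in that reference: fundamental theorem of calculus plus Cauchy--Schwarz on $\Omega$ for the trace bounds \eqref{e1}--\eqref{e2}, the classical Hardy integration by parts for \eqref{e4} and a direct pointwise bound for \eqref{e5}, and the ``low-order factor in $L^\infty$ via $H^2\hookrightarrow L^\infty$, high-order factor in $L^2$'' Moser scheme for \eqref{e3}, which then combines with the Hardy inequalities to give \eqref{e6} and \eqref{e8}. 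Two small points are worth tightening if you write this out in full. First, in the integration by parts for \eqref{e4} the boundary contribution at $y=+\infty$ is not obviously zero; the clean fix is to work first on $[0,R]$, where the boundary term $\frac{1}{1-2\lambda}\langle R\rangle^{1-2\lambda}F(R)^2$ is nonpositive for $\lambda>\tfrac{1}{2}$ and may be discarded, and then let $R\to\infty$. Second, \eqref{e3} concerns a single product $D^\alpha f\cdot D^{\widetilde\alpha}g$ with fixed multi-indices rather than a Leibniz sum, and in \eqref{e6} the case in which $\partial_\tau^{\widetilde\beta}\partial_y^{-1}h$ carries the $L^\infty$ factor requires applying the weighted Sobolev embedding after the pointwise Hardy bound \eqref{e5}, which costs two extra derivatives and is exactly what forces $|\widetilde\beta|\le m-2$; your budget check $\min(|\alpha|,|\widetilde\alpha|)\le\lfloor m/2\rfloor\le m-2$ for $m\ge 3$ covers this, so the bookkeeping closes.
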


\section{Almost Equivalence of Weighted Norms}\label{appendixB}

\quad In this subsection, we give the almost equivalence in $L_l^2-$norm
between $\partial_\tau^\beta(u, \theta, h)$ and the quantity
$(u_\beta, \theta_\beta, h_\beta)$ defined in \eqref{function3}.
\begin{lemma}\label{equivalent}
If the smooth function $(u, \theta, h)$ satisfies the nonlinear problem \eqref{eq4}
in $[0, T]$, and the assumption condition
\eqref{p1} holds on, then for any $t \in [0, T]$, any real number $l \ge 0$, an integer
$m \ge 3$ and the quantity $(u_\beta, \theta_\beta, h_\beta)$
with $|\beta|=m$ defined by \eqref{function3}, we have the following relations
\begin{equation}\label{d1}
M(t)^{-1}\|\partial_\tau^\beta(u, \theta, h)(t)\|_{L^2_l(\Omega)}
\le \|(u_\beta, \theta_\beta, h_\beta)(t)\|_{L^2_l(\Omega)}
\le M(t)\|\partial_\tau^\beta(u, \theta, h)(t)\|_{L^2_l(\Omega)}
\end{equation}
and
\begin{equation}\label{d2}
\|\partial_y \partial_\tau^\beta (u, \theta, h)(t)\|_{L^2_l(\Omega)}
\le \|\partial_y(u_\beta, \theta_\beta, h_\beta)(t)\|_{L^2_l(\Omega)}
+M(t)\|h_\beta(t)\|_{L^2_l(\Omega)},
\end{equation}
where the function $M(t)$ is defined by \eqref{mt}.
\end{lemma}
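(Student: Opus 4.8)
The plan is to deduce both \eqref{d1} and \eqref{d2} directly from the definitions \eqref{function3}, read in the two directions $\partial_\tau^\beta(u,\theta,h)\leftrightarrow(u_\beta,\theta_\beta,h_\beta)$, so that the whole lemma reduces to estimating the coupling terms $\eta_i\,\partial_\tau^\beta\psi$ and their normal derivatives. Two analytic ingredients are needed: weighted $L^\infty$ bounds on the coefficients $\eta_i$ (and on the combinations arising after differentiation), and a weighted $L^2$ bound on $\partial_\tau^\beta\psi=\partial_y^{-1}\partial_\tau^\beta h$. I expect these to assemble precisely into the factor $M(t)$ of \eqref{mt}.

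First I would record the pointwise estimates for the $\eta_i$. Because $\phi^{(j)}$ is compactly supported (see \eqref{function1}) and $h+H\phi'\ge\delta_0$ by \eqref{p1}, the quotients $\eta_i=(\partial_y(u,\theta,h)+(U,\Theta,H)\phi'')/(h+H\phi')$ satisfy $\|\langle y\rangle^{l+1}\eta_i\|_{L^\infty}\lesssim\delta_0^{-1}(\|(U,\Theta,H)\|_{L^\infty(\mathbb{T}_x)}+\|\langle y\rangle^{l+1}\partial_y(u,\theta,h)\|_{L^\infty})$. The decisive observation for the second-derivative term in $M(t)$ is the algebraic identity $\partial_y\eta_i+\eta_i\eta_3=(\partial_y^2(u,\theta,h)+(U,\Theta,H)\phi^{(3)})/(h+H\phi')$, obtained from the quotient rule together with $\eta_3=\partial_y(h+H\phi')/(h+H\phi')$; the dangerous $\eta_i\eta_3$ piece cancels, leaving a single power of $\delta_0^{-1}$ and producing exactly $\|\langle y\rangle^{l+1}\partial_y^2(u,\theta,h)\|_{L^\infty}$.

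The upper bound in \eqref{d1} is then routine: from $(u_\beta,\theta_\beta,h_\beta)=\partial_\tau^\beta(u,\theta,h)-(\eta_1,\eta_2,\eta_3)\partial_\tau^\beta\psi$ and the splitting $\langle y\rangle^l\eta_i\partial_\tau^\beta\psi=(\langle y\rangle^{l+1}\eta_i)(\langle y\rangle^{-1}\partial_\tau^\beta\psi)$, H\"older's inequality and the Hardy bound \eqref{estimate3}, namely $\|\langle y\rangle^{-1}\partial_\tau^\beta\psi\|_{L^2}\le2\|\partial_\tau^\beta h\|_{L^2}$, give each coupling term a factor $\|\langle y\rangle^{l+1}\eta_i\|_{L^\infty}\|\partial_\tau^\beta h\|_{L^2_l}$; summing over the three components and recalling $M(t)\ge1$ closes the right-hand inequality.

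The genuine difficulty — and where I expect the real work — is the reverse inequality in \eqref{d1} and the remainder $M(t)\|h_\beta\|_{L^2_l}$ in \eqref{d2}, where $\partial_\tau^\beta\psi$ must be bounded by $h_\beta$ rather than by $\partial_\tau^\beta h$. The naive rearrangement $\partial_\tau^\beta h=h_\beta+\eta_3\partial_\tau^\beta\psi$ combined with \eqref{estimate3} closes only by absorption if $\|\langle y\rangle^{l+1}\eta_3\|_{L^\infty}$ is small, which it is not (it is of size $\delta_0^{-1}$). I would instead use the same structure as in \eqref{df2}: since $h_\beta=\partial_y\partial_\tau^\beta\psi-\eta_3\partial_\tau^\beta\psi=(h+H\phi')\partial_y[\partial_\tau^\beta\psi/(h+H\phi')]$, integrating from $y=0$ with $\partial_\tau^\beta\psi|_{y=0}=0$ gives the representation $\partial_\tau^\beta\psi=(h+H\phi')\int_0^y h_\beta/(h+H\phi')\,dz$, to which the Hardy inequality \eqref{e7} applies and yields $\|\langle y\rangle^{-1}\partial_\tau^\beta\psi\|_{L^2}\lesssim\delta_0^{-1}\|h+H\phi'\|_{L^\infty}\|h_\beta\|_{L^2}$, the $L^\infty$ factor being finite under the a priori hypotheses. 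Feeding this back into $\partial_\tau^\beta(u,\theta,h)=(u_\beta,\theta_\beta,h_\beta)+(\eta_1,\eta_2,\eta_3)\partial_\tau^\beta\psi$ gives \eqref{d1}; differentiating \eqref{function3} in $y$ so that the normal derivative lands on $\eta_i$ (whence the combinations $\partial_y\eta_i+\eta_i\eta_3$ of the second paragraph) and on $\psi$ (whence $\partial_\tau^\beta h$, again rewritten through $h_\beta$) gives \eqref{d2} with the stated $M(t)\|h_\beta\|_{L^2_l}$ remainder. Tracking the $\delta_0$-powers through these steps, controlled by \eqref{p1}, is the only delicate bookkeeping.
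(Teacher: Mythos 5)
Your proposal is correct and follows essentially the same route as the paper's Appendix B proof: the upper bound in \eqref{d1} via H\"older together with the Hardy bound \eqref{estimate3}, and the reverse bound and \eqref{d2} via the representation $\partial_\tau^\beta\psi=(h+H\phi')\int_0^y h_\beta/(h+H\phi')\,d\widetilde y$ obtained from $h_\beta=(h+H\phi')\partial_y\{\partial_\tau^\beta\psi/(h+H\phi')\}$ and $\psi|_{y=0}=0$. Your cancellation $\partial_y\eta_i+\eta_i\eta_3=(\partial_y^2(u,\theta,h)+(U,\Theta,H)\phi^{(3)})/(h+H\phi')$ is exactly what makes the paper's identity \eqref{d8} come out with a single power of $\delta_0^{-1}$, so the two computations coincide after the substitution.
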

\begin{proof}
Firstly, by the definition of \eqref{function3}, we find
\begin{equation}\label{d3}
\begin{aligned}
\|\theta_\beta\|_{L^2_l(\Omega)}
&\le \|\partial_\tau^\beta \theta\|_{L^2_l(\Omega)}
     +\|\langle y\rangle^{l+1}\eta_2\|_{L^\infty(\Omega)}
     \|\langle y\rangle^{-1}\partial_\tau^\beta \psi\|_{L^2(\Omega)}\\
&\le \|\partial_\tau^\beta \theta\|_{L^2_l(\Omega)}
     +2\delta_0^{-1}
     (\|\langle y\rangle^{l+1}\partial_y \theta\|_{L^\infty(\Omega)}
     +C\|\Theta\|_{L^\infty(\mathbb{T}_x)})
     \|\partial_\tau^\beta h\|_{L^2(\Omega)}\\
&\le M(t)\|\partial_\tau^\beta(\theta, h)\|_{L^2_l(\Omega)}.
\end{aligned}
\end{equation}
Similarly, we obtain the following estimate(or see Liu et al.\cite{Liu-Xie-Yang})
\begin{equation}\label{d4}
\|(u_\beta, h_\beta)\|_{L^2_l(\Omega)}
\le M(t)\|\partial_\tau^\beta(u, h)\|_{L^2_l(\Omega)}.
\end{equation}
On the other hand, in view of the definition of $h_\beta$ in \eqref{function3},
one attains
\begin{equation*}
h_\beta=\partial_\tau^\beta h-\eta_3 \partial_\tau^\beta \psi
=(h+H\phi')\partial_y \left\{\frac{\partial_\tau^\beta \psi}{h+H\phi'}\right\},
\end{equation*}
which, together with the boundary condition $\psi|_{y=0}=0$, implies directly
\begin{equation}\label{d5}
\partial_\tau^\beta \psi=(h+H\phi')\int_0^y \frac{h_\beta}{h+H\phi'}d\widetilde{y}.
\end{equation}
Substituting \eqref{d5} into \eqref{function3}, one arrives at
\begin{equation}\label{d6}
\partial_\tau^\beta \theta
=\theta_\beta+(\partial_y \theta+\Theta \phi'')
\int_0^y \frac{h_\beta}{h+H\phi'}d\widetilde{y}.
\end{equation}
Then, it is easy to check that
\begin{equation}\label{d7}
\begin{aligned}
\|\partial_\tau^\beta \theta\|_{L^2_l(\Omega)}
&\le \|\theta_\beta\|_{L^2_l(\Omega)}
     +\|\langle y\rangle^{l+1}(\partial_y \theta+\Theta \phi'')\|_{L^\infty(\Omega)}
     \|\langle y\rangle^{-1}\int_0^y \frac{h_\beta}{h+H\phi'}\|_{L^2(\Omega)}\\
&\le \|\theta_\beta\|_{L^2_l(\Omega)}
     +2\delta_0^{-1}
     (\|\langle y\rangle^{l+1}\partial_y \theta\|_{L^\infty(\Omega)}
      +C\|\Theta\|_{L^\infty(\mathbb{T}_x)})
     \|h_\beta\|_{L^2(\Omega)}\\
&\le M(t)\|(\theta_\beta, h_\beta)\|_{L^2_l(\Omega)}.
\end{aligned}
\end{equation}
Furthermore, taking $y$ derivative to both handside of representation \eqref{d6}, we find
\begin{equation}\label{d8}
\partial_y \partial_\tau^\beta \theta
=\partial_y \theta_\beta
+(\partial_y^2 \theta+\Theta \phi^{(3)})\int_0^y \frac{h_\beta}{h+H\phi'}d\widetilde{y}
+\frac{h_\beta(\partial_y \theta+\Theta \phi'')}{h+H\phi'}.
\end{equation}
Hence, it is easy to deduce that
\begin{equation}\label{b9}
\begin{aligned}
\|\partial_y \partial_\tau^\beta \theta\|_{L^2_l(\Omega)}
&\le \|\partial_y \theta_\beta\|_{L^2_l(\Omega)}
+\|\langle y\rangle^{l+1}(\partial^2_y \theta+\Theta \phi^{(3)})\|_{L^\infty(\Omega)}
     \|\langle y\rangle^{-1}\int_0^y \frac{h_\beta}{h+H\phi'}d\tau\|_{L^2(\Omega)}\\
&\quad +\delta_0^{-1}\|\partial_y \theta+\Theta \phi''\|_{L^\infty(\Omega)}
       \|h_\beta\|_{L_l^2(\Omega)}\\
&\le \|\partial_y \theta_\beta\|_{L^2_l(\Omega)}
     +M(t)\|h_\beta\|_{L^2_l(\Omega)}.
\end{aligned}
\end{equation}
Similarly, we obtain the following estimates(or see \cite{Liu-Xie-Yang})
\begin{equation}\label{b10}
\|\partial_\tau^\beta(u, h)\|_{L^2_l(\Omega)}
\le M(t)\|(u_\beta, h_\beta)\|_{L^2_l(\Omega)},
\end{equation}
and
\begin{equation}\label{b11}
\|\partial_y \partial_\tau^\beta (u, h)\|_{L^2_l(\Omega)}
\le \|\partial_y (u_\beta, h_\beta)\|_{L^2_l(\Omega)}
     +M(t)\|h_\beta\|_{L^2_l(\Omega)}.
\end{equation}
Therefore, we  complete  the proof of the Lemma \ref{equivalent}.
\end{proof}

\section*{Acknowledgements}
The author Jincheng Gao would like to thank Chengjie Liu for fruitful discussions.
Jincheng Gao's research was partially supported by
Guangdong Natural Science Foundation (Grant No.2014A030313161),
China Postdoctoral Science Foundation Project(Grant No.2016M600064),
and NNSF of China(Grant No.11571380). Daiwen Huang's research was partially
supported by the NNSF of China(Grants No.11631008) and National Basic
Research Program of China 973 Program(Grants No. 2007CB814800).

%\begin{thebibliography}{99}

\end{document}